\documentclass[11pt,letterpaper]{amsart}
\usepackage[utf8]{inputenc}
\usepackage[english]{babel}
\usepackage{amsmath,amssymb,amsthm,mathrsfs,color,times,textcomp,yfonts,mathtools,cases}

\usepackage[top=3.5cm,bottom=3.5cm,left=3cm,right=3cm]{geometry}

\allowdisplaybreaks[4]

\usepackage{bm}
\usepackage[T1]{fontenc} %加粗

\usepackage{subcaption}
\usepackage[normalem]{ulem}
\usepackage[export]{adjustbox}
\usepackage{esint}
\usepackage{xcolor}
\usepackage{array}
\usepackage{booktabs} % 用于专业排版的表格线
\usepackage{siunitx}
\usepackage[colorlinks=true]{hyperref}
\hypersetup{urlcolor=blue, citecolor=blue, linkcolor=red}

\hypersetup{
	colorlinks=true,
	linkcolor=red
}

\usepackage[square,numbers]{natbib}
\usepackage{url}
\usepackage{xurl}
\usepackage{float}
\usepackage{ulem}
\usepackage{syntonly}
\usepackage{mathtools}
\usepackage{bm}
\usepackage{soul}
\usepackage{amsfonts,amsmath,latexsym,verbatim,amscd,mathrsfs,color,array}
\usepackage[colorlinks=true]{hyperref}
\usepackage{amsmath,amssymb,amsthm,amsfonts,graphicx,color}
\usepackage{amssymb}
\usepackage{epstopdf}

\newcommand{\R}{ \mathbb{R} }

\newtheorem{definition}{Definition}
\theoremstyle{plain}
\newtheorem{theorem}{Theorem}[section]
\newtheorem{lemma}[theorem]{Lemma}
\newtheorem{prop}[theorem]{Proposition}
\newtheorem{proposition}[theorem]{Proposition}
\newtheorem{corollary}[theorem]{Corollary}
\newtheorem{remark}{Remark}[theorem]
\newcommand{\bremark}{\begin{remark} \em}
	\newcommand{\eremark}{\end{remark} }

\numberwithin{equation}{section}

\title{Giga-Kohn-type results for the fully fractional heat equation}

\begin{document}

\author[Yannick Sire]{Yannick Sire}
\address{Yannick Sire, Department of Mathematics, Johns Hopkins University, 404 Krieger Hall, 3400 N. Charles Street, Baltimore, MD 21218, USA}
\email{ysire1@jhu.edu}

\author[Juncheng Wei]{Juncheng Wei}
\address{Juncheng Wei, Department of Mathematics, Chinese University of Hong
Kong, Shatin, NT, Hong Kong.}
\email{wei@math.cuhk.edu.hk}

\author[Ke Wu]{Ke Wu}
\address{Ke Wu, School of Mathematics, Yunnan Normal University, Kunming, 650500, China.}
\email{kewu@ynnu.edu.cn}

\author[Zikai Ye]{Zikai Ye}
\address{Zikai Ye, Department of Mathematics, Chinese University of Hong
Kong, Shatin, NT, Hong Kong.}
\email{zkye@math.cuhk.edu.hk}

\begin{abstract}
We consider the semilinear fully fractional heat equation
\begin{equation*}
    (\partial_t-\Delta)^\sigma u=|u|^{p-1}u \text{ in } \R^n\times\mathbb{R}_{-}, \quad 0<\sigma<1.
\end{equation*}
For $n\leq 2\sigma$ or $1<p\leq \frac{n+2\sigma}{n-2\sigma}$, we generalize the monotonicity formula and Liouville-type theorem when $\sigma=1$ proved by Giga and Kohn. In order to overcome the difficulty that this equation is nonlocal, we  give a new  interpretation of the classical Giga-Kohn's Pohozaev identity in terms of Hermite expansion. This insight is new and interesting even for $\sigma=1$. We further establish a space-time nonlocal monotonicity formula for the self-similar equation. As far as we are concerned, this is the first monotonicity formula for space-time nonlocal equations without using an extension by Stinga and Torrea.
\end{abstract}

\maketitle

\section{Introduction}\label{Sec:Introduction}

Finite time blow-up solutions of the Fujita equation
\begin{equation}\label{Fujita}
u_t=\Delta u+|u|^{p-1}u
\end{equation}
have been studied extensively in the last several years. 

% see \cite{Giga-Kohn1985, Giga-Kohn2,Giga-Matsui-Sasayama2004, Merle-Zaag1998, del20203D, del20204D,del20195D,Harada20205D, Harada20206D}.
The finite time blow-up of \eqref{Fujita} is said to be of Type I if 
\begin{equation*}
\limsup_{t\to T^-}(T-t)^{\frac{1}{p-1}}\|u(\cdot,t)\|_{L^{\infty}}<+\infty
\end{equation*}
and of Type II if 
\begin{equation*}
\limsup_{t\to T^-}(T-t)^{\frac{1}{p-1}}\|u(\cdot,t)\|_{L^{\infty}}  =+\infty.
\end{equation*}

When $n\leq 2$ or $1<p\leq \frac{n+2}{n-2}$, Giga-Kohn \cite{Giga-Kohn1985} classified Type I blow-up self-similar solutions and established a Liouville Theorem for Type I blow-up solutions of \eqref{Fujita} with the help of a monotonicity formula. They also showed that no nonnegative Type II blow-up solution exists in \cite{Giga-Kohn2}, while the nonnegative assumption was removed by Giga-Matsui-Sasayama in \cite{Giga-Matsui-Sasayama2004}. Type I blow-up solutions without similarity were classified by Giga-Kohn \cite{Giga-Kohn2} and Merle-Zaag \cite{Merle-Zaag1998} in the same range. There are also various existence results of Type II blow-up solutions when $n\leq 6$ and $p=\frac{n+2}{n-2}$, see \cite{del20203D,del20204D,del20195D,Harada20205D,Harada20206D}.

The purpose of this paper is to establish analogues of the Giga-Kohn-type classification and monotonicity results for the following fully fractional heat equation
\begin{equation}\label{fraceqn}
    (\partial_t-\Delta)^\sigma u=|u|^{p-1}u \text{ in } \R^n\times\mathbb{R}_{-}
\end{equation}
with $0<\sigma<1$, $p>1$ and $\mathbb{R}^{n}\times\mathbb{R}_{-}:=\mathbb{R}^{n}\times (-\infty, 0)$. Here, the fully fractional heat operator $(\partial_t-\Delta)^\sigma u$ is defined by the singular integral
\begin{equation}\label{point wise formula}
(\partial_t-\Delta)^\sigma u(x,t):=c_{n,\sigma}\int_{-\infty}^{t}\int_{\R^n}\frac{u(x,t)-u(y,T)}{(t-T)^{\frac{n}{2}+1+\sigma}}e^{-\frac{|x-y|^2}{4(t-T)}}dydT,
\end{equation}
where $c_{n,\sigma}:=\frac{1}{(4\pi)^{\frac{n}{2}}|\Gamma(-\sigma)|}$. For $u\in C^{2\sigma+\epsilon,\sigma+\epsilon}_{x,t,loc}(\mathbb{R}^n\times \mathbb{R}_{-})$ (see \cite[Definition 2.1]{Stinga-Torrea2017}) belonging to the following space of slowly increasing functions
\begin{equation*}
\mathcal{L}(\mathbb{R}^n\times\mathbb{R}_-):=\{u(x,t)\in L^1_{loc}(\mathbb{R}^n\times \mathbb{R}_-):\ \int_{-\infty}^{t}\int_{\mathbb{R}^n}\frac{|u(y,T)|e^{-\frac{|x-y|^2}{4(t-T)}}}{1+(t-T)^{\frac{n}{2}+1+\sigma}}dydT<+\infty,\ \forall\  t<0\},
\end{equation*}
the point-wise formula \eqref{point wise formula} is well-defined, see Stinga-Torrea \cite[Remark 2.2]{Stinga-Torrea2017}.

The fully fractional heat operator $(\partial_t-\Delta)^{\sigma}$ includes several classical nonlocal operators as special cases. When $u$ is independent of $t$, the operator $(\partial_t-\Delta)^{\sigma}$ reduces to the fractional Laplacian, $(-\Delta)^{\sigma}$, defined as
\begin{equation*}
(-\Delta)^{\sigma}u(x)=C_{n,\sigma}P.V.\int_{\mathbb{R}^n}\frac{u(x)-u(y)}{|x-y|^{n+2\sigma}}dy   
\end{equation*}
for some constant $C_{n,\sigma}$. When $u$ is independent of $x$, the operator $(\partial_t-\Delta)^{\sigma}$ reduces to $\partial_t^{\sigma}$, the Marchaud fractional derivative of order $\sigma$, defined as
\begin{equation*}
\partial_t^{\sigma}u(t)=C_\sigma\int_{-\infty}^{t}\frac{u(t)-u(T)}{(t-T)^{1+\sigma}}dT   
\end{equation*}
for some constant $C_{\sigma}$. Moreover, the fully fractional heat operator $(\partial_t-\Delta)^{\sigma}$ converges to the classical heat operator $\partial_t-\Delta$ as $\sigma\to 1$, see \cite{Wangqiuhong2023}.

The equation \eqref{fraceqn} has been studied in several recent works. In \cite{Ferreira2024}, Ferreira-de Pablo  considered the existence of the nonnegative memory data problem, Fujita-type exponent $p_{F,\sigma}=1+\frac{2\sigma}{n+2-2\sigma}$ and blow-up rates of \eqref{fraceqn}.

One prominent property of the equation \eqref{fraceqn} is that it is scale-invariant. That is, if $u(x, t)$ is a solution of \eqref{fraceqn}, then so is
$$u_{\lambda}(x, t)=\lambda^{2\beta}u(\lambda x, \lambda^{2}t),\qquad \beta=\frac{\sigma}{p-1}$$
for every $\lambda>0$.
\begin{definition}[Self-similar solutions]
If $u$ is a solution of \eqref{fraceqn} and $u_{\lambda}=u$ for every $\lambda>0$, then $u$ is called a self-similar solution of \eqref{fraceqn}.
\end{definition}

One can check that $u=0$ and $u=\pm\kappa (-t)^{-\frac{\sigma}{p-1}}\in C^{2\sigma+\epsilon,\sigma+\epsilon}_{x,t,loc}(\mathbb{R}^n\times \mathbb{R}_{-})\cap\mathcal{L}(\mathbb{R}^n\times\mathbb{R}_-)$ are self-similar solutions of \eqref{fraceqn}, where
$$\kappa=\left(\frac{\Gamma(\beta+\sigma)}{\Gamma(\beta)}\right)^{\frac{1}{p-1}}.$$ 

Our first main result shows that the only self-similar solutions are the trivial ones $u=0$ and $u=\pm\kappa (-t)^{-\frac{\sigma}{p-1}}$ provided that $n\leq 2\sigma$ or $1<p\leq \frac{n+2\sigma}{n-2\sigma}$ and $u$ satisfies a certain condition.
\begin{theorem}\label{main}
 For $n\leq 2\sigma$ or $1<p\leq \frac{n+2\sigma}{n-2\sigma}$, if $u$ is a self-similar weak solution of \eqref{fraceqn} satisfying 
\begin{equation}\label{type I assumption}
    \sup_{(x, t)\in\mathbb{R}^n\times\mathbb{R}_{-}}(-t)^{\beta}|u(x,t)|\leq M<\infty,
\end{equation}
then either $u=0$ or $u=\pm \kappa (-t)^{-\beta}$, where  $\beta=\frac{\sigma}{p-1}$ and $\kappa=\left(\frac{\Gamma(\beta+\sigma)}{\Gamma(\beta)}\right)^{\frac{1}{p-1}}$.
\end{theorem}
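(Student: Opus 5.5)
Since $u$ is self-similar, I will write $u(x,t)=(-t)^{-\beta}w(y)$ with $y=x/\sqrt{-t}$, which turns the problem into a nonlocal elliptic equation for $w$ on $\mathbb{R}^n$. By \eqref{type I assumption}, $w$ is bounded. Substituting into \eqref{point wise formula} and changing variables $T=t s$ with $s>1$ (so $t-T=(-t)(s-1)$) and $z=y'\sqrt{s}$ gives an operator
\[
\mathcal{A}_\sigma w(y)=c_{n,\sigma}\int_1^\infty\int_{\mathbb{R}^n}\frac{w(y)-s^{-\beta}w(z/\sqrt{s})}{(s-1)^{\frac n2+1+\sigma}}e^{-\frac{|y-z|^2}{4(s-1)}}\,dz\,ds ,
\]
and the self-similar equation becomes $\mathcal{A}_\sigma w=|w|^{p-1}w$. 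The key structural observation is that $\mathcal{A}_\sigma$ is a function of the Ornstein--Uhlenbeck operator $L=-\Delta+\frac{y}{2}\cdot\nabla$, which is self-adjoint in $L^2(\rho\,dy)$ with Gaussian weight $\rho=e^{-|y|^2/4}$. Indeed, the semigroup of $\partial_t-\Delta$, conjugated by the self-similar change of variables, is the Mehler kernel $e^{-\tau(L+\beta)}$ with $s=e^\tau$. Hence
\[
\mathcal{A}_\sigma=\frac{\Gamma(L+\beta+\sigma)}{\Gamma(L+\beta)}
\]
in the sense of functional calculus. On a Hermite eigenfunction with eigenvalue $k/2$, $k\in\mathbb{N}$, it acts by multiplication by $\lambda_k=\Gamma(\frac k2+\beta+\sigma)/\Gamma(\frac k2+\beta)$. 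In particular, on constants it gives $\lambda_0=\Gamma(\beta+\sigma)/\Gamma(\beta)=\kappa^{p-1}$, which is consistent with the explicit solutions.

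With this in hand I will run Giga--Kohn's two identities in Hermite language. Testing against $w\rho$ gives the energy identity $\sum_k\lambda_k|\hat w_k|^2=\int|w|^{p+1}\rho$. For the Pohozaev identity, I will test against $(y\cdot\nabla w)\rho$. In the Hermite basis, $y\cdot\nabla$ acts as $2L$ plus a shift coupling level $k$ to level $k+2$ (the generator of dilations, which is the Giga--Kohn insight). This yields an identity in which $\int|w|^{p+1}\rho$ and $\int|y|^2|w|^{p+1}\rho$ appear with coefficients $\frac{n}{p+1}$ and $\frac1{2(p+1)}$. On the quadratic side it gives $\sum_k\mu_k|\hat w_k|^2$ plus cross terms $\hat w_k\hat w_{k+2}$ with coefficients involving $\lambda_{k+2}-\lambda_k$. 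Combining the two identities so that the nonlinear terms have a sign, the condition $p\le\frac{n+2\sigma}{n-2\sigma}$ (or $n\le2\sigma$) should make the resulting quadratic form nonnegative, and zero only on the radial-in-$k$ constant mode. The expected conclusion is $\int|y|^2|w|^{p+1}\rho\le0$ unless $\hat w_k=0$ for all $k\ge1$. This forces $w$ to be constant, and then $w^{p-1}=\lambda_0$ or $w=0$, giving $u=0$ or $u=\pm\kappa(-t)^{-\beta}$.

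Justifying the tests is routine. $w$ is bounded and has enough regularity (by Schauder-type estimates for $(\partial_t-\Delta)^\sigma$ applied to $u$), so the Gaussian weight makes all integrals converge. The functional-calculus formula can be verified on Hermite functions using the Mehler formula and the Gamma integral $\int_0^\infty(1-e^{-\tau a})\dots$, i.e. the Balakrishnan representation.

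The main obstacle is the positivity of the quadratic form in the Pohozaev step. In the local case $\lambda_k$ is linear in $k$ and the cross terms telescope exactly. Here $\lambda_k$ is a Gamma ratio, so I will need monotonicity and convexity or concavity properties of $k\mapsto\lambda_k$, for example log-convexity of $\Gamma$ and Gautschi/Wendel-type inequalities $\lambda_{k+2}-\lambda_k\le\sigma(\ldots)$. These are needed to bound the off-diagonal terms by the diagonal ones via Cauchy--Schwarz. The critical exponent should appear exactly as the threshold where the large-$k$ asymptotics $\lambda_k\sim(k/2)^\sigma$ balance the Sobolev-critical scaling. I would first try to write the combined form as a sum of squares $\sum_k c_k|\hat w_{k+2}-d_k\hat w_k|^2$, with $c_k\ge0$ verified from Gamma-function inequalities.
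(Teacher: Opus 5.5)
Your spectral identification $\mathcal{A}_\sigma=\Gamma(L+\beta+\sigma)/\Gamma(L+\beta)$ matches the paper's key lemma, with eigenvalue $t_k=\Gamma(\beta+\sigma+\frac k2)/\Gamma(\beta+\frac k2)$ on degree-$k$ Hermite polynomials. The gap is that your two multipliers cannot remove the nonlinearity, so the argument cannot close. Write $w=\sum_\alpha c_\alpha h_\alpha$ and $b_{\alpha,j}=\sqrt{(\alpha_j+1)(\alpha_j+2)}$. Let $N_0$ and $N_2$ denote $(4\pi)^{-n/2}\int|w|^{p+1}\rho$ and $(4\pi)^{-n/2}\int|y|^2|w|^{p+1}\rho$. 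Testing against $w\rho$ (call this $E$) and against $(y\cdot\nabla w)\rho$ (call this $P$) gives
\[
\sum_\alpha t_{|\alpha|}c_\alpha^2=N_0,\qquad \sum_\alpha|\alpha|t_{|\alpha|}c_\alpha^2+\sum_{\alpha,j}t_{|\alpha|}b_{\alpha,j}c_\alpha c_{\alpha+2e_j}=\frac{N_2-2nN_0}{2(p+1)}.
\]
Evaluate any combination $aE+bP$ at the genuine solutions $w=\pm\kappa$. Identity $P$ reads $0=0$ and identity $E$ reads $\kappa^{p+1}=\kappa^{p+1}$, so both sides of $aE+bP$ equal $a\kappa^{p+1}$. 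A combination of the form ``quadratic form of one definite sign $=$ nonlinear term of the opposite sign'' therefore forces $a=0$. But then the nonlinear side is a multiple of $N_2-2nN_0$, which has no sign.

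Concretely, eliminating $N_0$ expresses $N_2/(2(p+1))$ as a quadratic form equal to $\frac{n}{p+1}\kappa^{p+1}>0$ at $w=\pm\kappa$. So your expected conclusion $\int|y|^2|w|^{p+1}\rho\le0$ for nonconstant $w$ is unattainable. The missing idea is Giga--Kohn's third multiplier. The paper tests against $(|y|^2-2n)w\rho$, whose nonlinear side is exactly $N_2-2nN_0$, i.e.\ $2(p+1)$ times that of the $(y\cdot\nabla w)\rho$ test. Subtracting removes the nonlinearity altogether and leaves a purely quadratic identity:
\[
0=2(p-1)\sum_\alpha|\alpha|t_{|\alpha|}c_\alpha^2+2\sum_{\alpha,j}\bigl(pt_{|\alpha|}-t_{|\alpha|+2}\bigr)b_{\alpha,j}c_\alpha c_{\alpha+2e_j},\qquad pt_k-t_{k+2}=\frac{p-1}{2}\,\frac{k\,t_k}{\beta+\frac k2}.
\]
The last equality uses $(p-1)\beta=\sigma$. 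The positivity argument must exploit this identity, not a sign condition on the nonlinearity.

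Even with the correct identity, positivity is the heart of the proof, and your plan (a sum of squares in $k$, Gautschi/Wendel bounds, large-$k$ asymptotics) leaves it open. The paper rewrites the identity as
\[
\sum_{k\ge1}\frac{kt_k}{\beta+k/2}\sum_{|\alpha|=k}\bigl[(2\beta+k)c_\alpha^2+c_\alpha\textstyle\sum_jb_{\alpha,j}c_{\alpha+2e_j}\bigr]=0.
\]
It then proceeds in three steps:
\begin{itemize}
\item It applies Young's inequality with a level-dependent weight $\epsilon_k$.
\item It handles the multi-index coupling by $\bigl(\sum_jb_{\alpha,j}c_{\alpha+2e_j}\bigr)^2\le(k+n)\sum_j(\alpha_j+2)c_{\alpha+2e_j}^2$, so after reindexing, level $k+2$ is charged at most $(k+n)(k+2)$ times its mass.
\item It transfers weight between levels using the exact ratio $t_{k+2}/t_k=\frac{\beta+\sigma+k/2}{\beta+k/2}$.
\end{itemize}
In the subcritical case it chooses $\epsilon_1=\frac n2+1-\sigma$, $\epsilon_2=\frac n2+2-\sigma$ and $\epsilon_k$ recursively. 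This makes every level's coefficient equal to $2\beta+\sigma-\frac n2$, which is positive exactly when $n\le2\sigma$ or $p<\frac{n+2\sigma}{n-2\sigma}$. Admissibility, i.e.\ $\epsilon_k\ge\frac k2+1-\sigma$ by induction, reduces to $(1-\sigma)(\beta+\sigma)\ge0$. The critical case needs a different choice, $\epsilon_k=\frac{n+k}{2}-\sigma$, with a separate check when $n=1$. So the threshold comes out uniformly from this level-by-level recursion anchored at $k=1,2$, not from the large-$k$ behaviour $t_k\sim(k/2)^\sigma$.
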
    
\begin{remark}
The weak solution is defined in Definition \ref{Def:weaksol} below. From the condition of Theorem \ref{main}, one can check that automatically $u\in \mathcal{L}(\mathbb{R}^n\times \mathbb{R}_-)$. 
\end{remark}
\begin{remark}
Due to \cite[Theorem 2]{Quittner2024}, we know that if $\sigma=\frac{1}{2}, n=1$ or $1<p<\frac{n+1}{n-1}$ and $u$ is a positive smooth solution of \eqref{fraceqn}, then the condition \eqref{type I assumption} holds automatically.
\end{remark}
Our next main result is concerned with a classification of solutions without the hypothesis of self-similarity.
\begin{theorem}\label{Liouville}
 For $n\leq 2\sigma$ or $1<p\leq \frac{n+2\sigma}{n-2\sigma}$, if $u$ is a weak solution of \eqref{fraceqn} satisfying \eqref{type I assumption} and
\begin{equation*}
    \limsup_{t\to 0^-}(-t)^{\beta}|u(0,t)|>0,
\end{equation*}
then $u=\pm \kappa (-t)^{-\beta}$.
\end{theorem}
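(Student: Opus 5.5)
The plan is the Giga--Kohn blow-down/blow-up argument. Everything passes through self-similar variables and a monotone energy whose limits at the two ends of time are self-similar solutions, to which Theorem \ref{main} applies.

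First I change variables. Set $w(y,s)=(-t)^{\beta}u(x,t)$ with $y=x/\sqrt{-t}$ and $s=-\log(-t)$. By \eqref{type I assumption}, $w$ is bounded by $M$ on $\mathbb{R}^n\times\mathbb{R}$. Under this change of variables, \eqref{fraceqn} becomes a space-time nonlocal equation for $w$ of the form $\mathcal{A}w=|w|^{p-1}w$. Here $\mathcal{A}$ is the conjugated fully fractional heat operator. It is translation invariant in $s$ and reduces, on $s$-independent functions, to the self-similar operator used in Theorem \ref{main}.

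Second, I need a monotonicity formula for the flow, extending the one the paper develops for the self-similar equation. I would use the Gaussian-weighted energy $E[w](s)$ built from the Hermite-expansion interpretation of the Pohozaev identity. In this picture, the quadratic part of $E$ is a nonnegative multiplier acting diagonally on Hermite modes, $\sum_k \frac{\Gamma(\beta+\sigma+k/2)}{\Gamma(\beta+k/2)}|\hat w_k|^2$, minus $\frac{1}{p+1}\int |w|^{p+1}\rho$, where $\rho$ is the Gaussian weight. The goal is a statement of the form $E(s_2)-E(s_1)=-\int_{s_1}^{s_2}D(s)\,ds$ with $D\ge0$, where $D=0$ forces $\partial_s w=0$. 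The hypothesis $p\le\frac{n+2\sigma}{n-2\sigma}$ (or $n\le 2\sigma$) enters exactly where the Pohozaev-type identity controls the sign of $D$. Boundedness of $w$ makes $E$ bounded on $\mathbb{R}$, so $\int_{-\infty}^{\infty}D\,ds<\infty$.

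Third, I pass to limits. For $s_k\to+\infty$ I translate, $w_k(y,s)=w(y,s+s_k)$; for $s_k\to-\infty$ I do the same. The uniform bound $M$ and interior H\"older regularity for the fractional heat equation give compactness in $C_{loc}$. Since $\int D<\infty$, the dissipation vanishes in the limit, so the limits $w_{\pm}$ are $s$-independent. These limits are therefore self-similar weak solutions satisfying \eqref{type I assumption}. By Theorem \ref{main}, each of $w_{\pm}$ equals $0$ or $\pm\kappa$.

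Finally, I identify the limits. The condition $\limsup_{t\to0^-}(-t)^\beta|u(0,t)|>0$ says that $w(0,s)$ does not tend to $0$ as $s\to+\infty$. Hence some subsequential limit $w_+$ is nonzero, so $w_+=\pm\kappa$ and $E(+\infty)=E[\kappa]$. Because $E[\kappa]>E[0]=0$, monotonicity gives $E(-\infty)\ge E(+\infty)=E[\kappa]>0$, so $w_-\ne0$ and $w_-=\pm\kappa$ as well. Then $E(-\infty)=E(+\infty)$, which makes $D\equiv0$ on all of $\mathbb{R}$. So $w$ is $s$-independent, hence self-similar, and Theorem \ref{main} gives $w\equiv\pm\kappa$ (the value $0$ is excluded), that is, $u=\pm\kappa(-t)^{-\beta}$.

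The main obstacle is the second step: a genuine monotonicity formula for the time-dependent nonlocal flow, not just for the stationary equation. Because $\mathcal{A}$ involves the whole past history, $\frac{d}{ds}E$ contains memory terms. I would first try to write $\mathcal{A}$ via its Fourier transform in $s$ combined with the Hermite expansion in $y$, so that it acts as a multiplier $m_k(\tau)$ on each mode. I would then look for a positive-real-part and Pohozaev-sign structure that yields a nonnegative space-time dissipation, accepting an integrated-in-$s$ inequality instead of a pointwise one if necessary. Such an integrated inequality still suffices for the limiting argument above.
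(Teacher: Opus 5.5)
Your overall architecture matches the paper's (Propositions \ref{monotonicity}, \ref{vindeps}, \ref{vlim}): self-similar variables, a Lyapunov functional, compactness of time translates, stationarity of the limits at $s=\pm\infty$, Theorem \ref{main} for those limits, and the comparison $\mathcal{E}[\pm\kappa]>\mathcal{E}[0]$. Your endgame is sound; it even bypasses Proposition \ref{vlim} by using $\int_{\mathbb{R}}D<\infty$ directly. The gap is Step 2. It is the real content of the theorem, you leave it open, and the functional you propose cannot be monotone. Write $v$ for your $w$, as in the paper, and let $E_0=\frac12\langle T_{\beta,\sigma}v,v\rangle_\rho-\frac{1}{p+1}\langle|v|^{p+1},1\rangle_\rho$; this is the quadratic form your Hermite multiplier $t_{|\alpha|}$ encodes. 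Then \eqref{EmdenFowler} gives, formally and up to the factor $(4\pi)^{-n/2}$,
\begin{equation*}
\frac{d}{ds}E_0=-\int_{-\infty}^{0}B_\tau\big[\delta_\tau v(\cdot,s),\partial_s v(\cdot,s)\big]\,d\tau,\qquad \delta_\tau v(\cdot,s):=v(\cdot,s)-v(\cdot,s+\tau),
\end{equation*}
where $B_\tau[f,g]=\iint f(z)g(w)M_\tau(z,w)\rho(z)\,dw\,dz$. This memory term has no sign, and no Pohozaev-type identity addresses it. You also misplace the role of the exponent. The paper's monotonicity formula holds for every $p>1$; the restriction $p\le\frac{n+2\sigma}{n-2\sigma}$ (or $n\le 2\sigma$) enters only through Theorem \ref{main}.

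The missing idea is to put the memory into the energy: the paper's $\mathcal{E}$ is $E_0$ plus $\frac12\int_{-\infty}^0B_\tau[\delta_\tau v,\delta_\tau v]\,d\tau$. Use $\partial_s v(s)=\partial_s\delta_\tau v(s)-\partial_\tau\delta_\tau v(s)$ and integrate by parts in $\tau$. This gives $\int_{-\infty}^0 B_\tau[\delta_\tau v,\partial_s v]\,d\tau=\frac{d}{ds}\frac12\int_{-\infty}^0 B_\tau[\delta_\tau v,\delta_\tau v]\,d\tau+\frac12\int_{-\infty}^0 Q_\tau[\delta_\tau v,\delta_\tau v]\,d\tau$, where $Q_\tau$ has kernel $\partial_\tau M_\tau\,\rho$. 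The boundary terms vanish because $m(\tau)\to0$ as $\tau\to-\infty$, and because $m(\tau)\|\delta_\tau v\|_{L^2_\rho}^2\lesssim(1-e^\tau)^{-1-\sigma}|\tau|^{2\theta}\to0$ as $\tau\to0^-$; this is where $\theta>\frac{1+\sigma}{2}$ from Proposition \ref{aprioriv} is used. Hence $\frac{d}{ds}\mathcal{E}=-\frac12\int_{-\infty}^0Q_\tau[\delta_\tau v,\delta_\tau v]\,d\tau$.

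The sign comes from Lemma \ref{Quadform}, not from a Pohozaev identity. By Lemma \ref{CK}, $B_\tau[f,f]=m(\tau)\int\rho|G_\tau f|^2$ with $G_\tau$ an Ornstein--Uhlenbeck evolution. So $Q_\tau[f,f]=m'(\tau)\int\rho|G_\tau f|^2+m(\tau)\int\rho|\nabla G_\tau f|^2$, which is positive definite since $m$ is increasing on $(-\infty,0)$. In your Hermite language, $B_\tau[f,f]=(4\pi)^{\frac n2}m(\tau)\sum_\alpha e^{\frac{|\alpha|}{2}\tau}f_\alpha^2$, and each multiplier is increasing in $\tau$.

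Because $v$ is only H\"older in $s$, the paper runs this computation with difference quotients $h^{-1}\delta_h v$ and the estimates of Proposition \ref{aprioriv}. Those estimates, not mere boundedness of $v$, are also what make $\mathcal{E}$ bounded on $\mathbb{R}$. Your Fourier-in-$s$ fallback does not replace this. $v$ is only bounded in $s$, not square integrable, so a Plancherel argument would give at best a global-in-$s$ identity. Your endgame instead needs $\mathcal{E}(a)\ge\mathcal{E}(b)$ for all $a<b$, with equality forcing $s$-independence.
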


\textbf{Main difficulties and ideas:} We now explain the main difficulties and ideas in the proof. For the local case $\sigma=1$, Giga-Kohn studied the solutions to \eqref{Fujita} in self-similar variables
\begin{equation*}
v(z,s)=e^{\frac{s}{p-1}}u(e^{-\frac{s}{2}}z,-e^{-s}).
\end{equation*}
For a self-similar solution $u$, the function $v$ defined above solves
\begin{equation*}
(\mathcal{L}+\frac{1}{p-1})v=|v|^{p-1}v\text{ in }\mathbb{R}^n\times \mathbb{R},
\end{equation*}
where $\mathcal{L}:=-\Delta+\frac{1}{2}z\cdot \nabla$ is the Ornstein-Uhlenbeck operator. Their proof is mainly based on two ingredients. The first ingredient is a Pohozaev identity for self-similar solutions of \eqref{Fujita}:
\begin{equation}\label{Giga-Kohn Pohozaev identity}
\left(\frac{n}{p+1}+\frac{2-n}{2}\right)\int_{\mathbb{R}^n}\rho|\nabla v|^2+\frac{1}{2}\left(\frac{1}{2}-\frac{1}{p+1}\right)\int_{\mathbb{R}^n}\rho|z|^2|\nabla v|^2=0,
\end{equation}
where $\rho(z)=e^{-\frac{|z|^2}{4}}$. From this, they can deduce $v$ is constant. The second ingredient is a monotonicity formula
\begin{equation*}
E[v](a)-E[v](b)=\int_a^b\int_{\mathbb{R}^n}|v_s|^2\rho dzds
\end{equation*}
for the energy functional 
\begin{equation*}
E[v](s)=\frac{1}{2}\int_{\mathbb{R}^n}\rho|\nabla v|^2dz+\frac{1}{2(p-1)}\int_{\mathbb{R}^n}\rho v^2 dz-\frac{1}{p+1}\int_{\mathbb{R}^n}\rho |v|^{p+1}dz.
\end{equation*}
The proof of both facts relies on some cancellations which seem only to work for the Laplace operator. The results for the operator under consideration require new ideas we now explain.

For $0<\sigma<1$, we first derive a nonlocal elliptic equation (see \eqref{EmdenFowler} below) for self-similar solutions of \eqref{fraceqn}. We can also attempt to follow the procedure in \cite{Giga-Kohn1985} to establish a Pohozaev identity involving a series of complicated integrals. However, the kernels of those integrals are no longer homogeneous. Then many non-estimable remainder terms appear, and we can only obtain some partial results. Indeed, we can only obtain a Liouville theorem for $1<p\leq\frac{1+\sigma}{1-\sigma}+\epsilon$ when $n=1$ with $\sigma\in (0,\frac{1}{2})\cup (\frac{5}{6},1)$ and $1<p<\frac{n+2+2\sigma}{n+2-2\sigma}+\epsilon$ when $n\geq 2$ or $n=1$ with $\frac{1}{2}\leq \sigma\leq \frac{5}{6}$, for some small $\epsilon$. See \cite{SWWYRemark} for details.

Our starting point relies on the use of Hermite polynomials which diagonalizes the Ornstein-Uhlenbeck operator $\mathcal{L}$.  
Surprisingly, we observe that  Hermite polynomials $h_{\alpha}$'s are both eigenfunctions of $\mathcal{L}$ and its nonlocal counterpart $T_{\beta,\sigma}$ defined in \eqref{EmdenFowler} below. More precisely, $T_{\beta,\sigma}$ can be written spectrally as
\begin{equation*}
T_{\beta,\sigma}=\frac{\Gamma(\mathcal{L}+\beta+\sigma)}{\Gamma(\mathcal{L}+\beta)}
\end{equation*}
with $\Gamma(\cdot)$ denoting the Gamma function. See Lemma \ref{Teigen} for the rigorous statement. This fact is the cornerstone of our argument. 

With this observation, we can expand $v$ in terms of Hermite polynomials $h_{\alpha}$'s and establish a family of Pohozaev identities in terms of its coefficients of the expansion on the basis of Hermite polynomials. By cleverly adjusting some parameters in these identities, we can show that $v$ must restrict to the zero  mode, i.e. $v$ is constant. Even though we have assumed $0<\sigma<1$, the proof in this paper actually works for $\sigma=1$ and we provide a {\sl spectral } proof for Giga-Kohn's Pohozaev identity \eqref{Giga-Kohn Pohozaev identity}.
We would like to emphasize that our argument is somehow flexible enough to encapsulate similar computations if the operator $(\partial_t-\Delta)^\sigma$ is replaced by a more general master equation. Notice that, as alluded in \cite{Stinga-Torrea2017}, the operator $(\partial_t-\Delta)^\sigma$ belongs to the class of master equations in the Physics literature. 

In order to prove Theorem \ref{Liouville}, we will establish a monotonicity formula for a space-time nonlocal energy $\mathcal{E}[v]$ in Proposition \ref{monotonicity}. Once the monotonicity formula is established, we can  prove Theorem \ref{Liouville} by using an argument similar to the proof of Giga-Kohn's Liouville-type theorem \cite[Theorem 2']{Giga-Kohn1985}. We notice that Bustamante \cite{Bustamante2025} also proved a monotonicity formula for the Stinga-Torrea extension of $u$ by applying a high-dimensional parabolic-to-elliptic transformation. However, as far as we are concerned, this is the first space-time nonlocal monotonicity formula.

Finally, we would like to mention that another operator with longe-range diffusion can be considered, namely the operator $\partial_{t}+(-\Delta)^{\sigma}$. In this case, it is known that the operator {\sl cannot} be represented by an integral similar to \eqref{point wise formula}. In particular, the associated Ornstein-Uhlenbeck operator is not a power of the local one, and it is not clear which are the correct replacements for the Hermite polynomials. However, we mention that Deng-Sire-Wei-Wu \cite{Deng-Sire-Wei-Wu2021} proved  some Giga-Kohn-type results for the model
\begin{equation}\label{new model}
\partial_{t}u+(-\Delta)^{\frac{1}{2}}u-|u|^{p-1}u=0.
\end{equation}
with $1<p<p_*(n)$ for some $1<p_*(n)<\frac{n+1}{n-1}$. The half-Laplacian case has an explicit kernel and hence all the computations can be made explicitly. Due to some technical restrictions, the results in \cite{Deng-Sire-Wei-Wu2021} are not optimal. It will be an interesting question to know whether the ideas in this paper work for equation \eqref{new model}.

This paper will be organized as follows. In Section 2, we provide some preliminaries for the proof, including the regularity of the solution and the self-similar coordinates. In Section 3,  we establish a Giga-Kohn-type Pohozaev identity for the fully fractional heat operator in terms of Hermite coefficients, which can be used to finish the proof of Theorem \ref{main}. In Section 4, we derive a monotonicity formula for space-time nonlocal equations without using the Stinga-Torrea extension. In Section 5, we give the proof of  Theorem \ref{Liouville} by using the results in Section 3 and Section 4.

{\bf Notations:}
\begin{itemize}
    \item For a multi-index $\alpha=(\alpha_1,\dots,\alpha_n)\in \mathbb{N}^n$, $|\alpha|:=\alpha_1+\dots+\alpha_n$, $\alpha!:=\alpha_1!\dots \alpha_n!$.
\end{itemize}

\section{Preliminaries}\label{Sec:Preliminary}
In this section, we will introduce some preliminaries for our proof, including the regularity of the solution and the self-similar coordinates.

First, we introduce the concept of weak solutions of \eqref{fraceqn}. 
\begin{definition}[Weak solution]\label{Def:weaksol}
We say $u\in L^p_{loc}(\mathbb{R}^n\times \mathbb{R}_{-})\cap \mathcal{L}(\mathbb{R}^n\times\mathbb{R}_-)$ is a weak solution to \eqref{fraceqn} if \eqref{fraceqn} holds in the sense of distribution, i.e. 
\begin{equation*}
\int_{-\infty}^{0}\int_{\mathbb{R}^n}u(x,t)(-\partial_t-\Delta)^{\sigma}\varphi(x,t)dxdt=\int_{-\infty}^{0}\int_{\mathbb{R}^n}|u|^{p-1}u\varphi dxdt.
\end{equation*}
for any $\varphi\in C_c^{\infty}(\mathbb{R}^n\times \mathbb{R}_{-})$, where
\begin{equation*}
(-\partial_t-\Delta)^\sigma \varphi(x,t):=c_{n,\sigma}\int_{t}^{+\infty}\int_{\R^n}\frac{\varphi(x,t)-\varphi(y,T)}{(T-t)^{\frac{n}{2}+1+\sigma}}e^{-\frac{|x-y|^2}{4(T-t)}}dydT
\end{equation*}
is the adjoint operator of $(\partial_t-\Delta)^{\sigma}$.
\end{definition}

We start with the initial regularity of weak solutions.
\begin{proposition}\label{initial regularity}
Assume $u$ is a weak solution of \eqref{fraceqn} satisfying \eqref{type I assumption}.
Then there exists a small positive constant $\epsilon$ such that $u\in C^{2\sigma+\epsilon,\sigma+\epsilon}_{x,t,loc}(\mathbb{R}^n\times \mathbb{R}_{-})$.
\end{proposition}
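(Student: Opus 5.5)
Since \eqref{type I assumption} gives $|u(x,t)|\le M(-t)^{-\beta}$, the function $u$ is bounded on every strip $\mathbb{R}^n\times(-\infty,-\delta]$. Hence $f:=|u|^{p-1}u$ is bounded there by $M^p\delta^{-\sigma p/(p-1)}$. The plan is to localize in time and then bootstrap with the parabolic Hölder estimates of Stinga--Torrea for $(\partial_t-\Delta)^\sigma$.

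First, fix a compact set $K\subset\mathbb{R}^n\times(-\infty,-2\delta]$. I would represent $u$ on $\mathbb{R}^n\times(-\infty,-\delta)$ as the fractional heat potential of $f$ plus a remainder. Let
\[
w(x,t)=\frac{1}{\Gamma(\sigma)}\int_{-\infty}^{t}\int_{\mathbb{R}^n}\frac{e^{-|x-y|^2/(4(t-T))}}{(4\pi(t-T))^{n/2}(t-T)^{1-\sigma}}\,f(y,T)\,dy\,dT .
\]
This is well defined on $t<-\delta$: the kernel integrates in $y$ to $(t-T)^{\sigma-1}/\Gamma(\sigma)$, and $|f(\cdot,T)|\lesssim(-T)^{-\beta p}$ with $\beta p=\beta+\sigma>\sigma$, so the $T$-integral converges at $-\infty$. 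Moreover $w$ solves $(\partial_t-\Delta)^\sigma w=f$ in the distributional sense of Definition \ref{Def:weaksol}, because the kernel is the fundamental solution of $(\partial_t-\Delta)^\sigma$. The bound $|w|\lesssim(-t)^{-\beta}$ also holds, by the same computation that produces $\kappa$. The difference $h=u-w$ then satisfies $(\partial_t-\Delta)^\sigma h=0$ weakly on $\mathbb{R}^n\times\mathbb{R}_-$ and $|h|\lesssim(-t)^{-\beta}$.

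Second, I would show that $h=0$, or at least that $h$ is smooth. Applying the distributional equation with a heat-type mollification and using the Fourier--Laplace symbol $(i\tau+|\xi|^2)^\sigma$, the support of $\hat h$ must lie where the symbol vanishes, i.e. at the origin. A bounded caloric-type function on a half-space with polynomial decay in $-t$ should therefore be trivial. A more robust alternative is to note that $\sigma$-parabolic harmonic functions that are bounded in the $\mathcal{L}$ sense are $C^\infty$, by the Stinga--Torrea extension and interior estimates. Either route gives $h\in C^\infty$ locally.

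Third, I would bootstrap the regularity of $w$. Since $f\in L^\infty$ near $K$, the potential estimate gives $w\in C^{2\sigma-\epsilon',\sigma-\epsilon'/2}_{x,t}$ (the parabolic Hölder space of order just below $2\sigma$); if $2\sigma>1$, log-Lipschitz-type bounds are replaced by the corresponding Hölder spaces. Then $f=|u|^{p-1}u$ inherits Hölder regularity of order $\min(1,p)\gamma$ from $u$. Applying the Schauder estimate of \cite{Stinga-Torrea2017}, which raises parabolic Hölder regularity by $2\sigma$, gives $u\in C^{\gamma+2\sigma}$ after one step. In particular $u\in C^{2\sigma+\epsilon,\sigma+\epsilon}$ on $K$ for some small $\epsilon>0$. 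When the Schauder theorem requires global Hölder data, I would use a cutoff $\eta f$ and treat $(1-\eta)f$ as a smooth tail contribution, just as for $h$.

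The main obstacle is the Liouville step for $h$, namely excluding nontrivial weak solutions of the homogeneous equation under only the $(-t)^{-\beta}$ growth, which allows blow-up as $t\to0^-$. I would address it by working only on $t<-\delta$ via a time truncation: the nonlocal operator looks only into the past, so the equation restricted to $t<-\delta$ is self-contained. On $t<-\delta$, $h$ is globally bounded, and I would prove the Liouville property there using the Fourier transform in $x$ combined with Marchaud-type uniqueness in $t$.
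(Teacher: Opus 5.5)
You take a genuinely different route from the paper, and as written it has a gap at Step 2, which you yourself flag as the main obstacle. The paper never introduces a global potential. It shows that $\eta u$ is a weak solution of $(\partial_t-\Delta)^\sigma(\eta u)=|u|^{p-1}u+f$ in $Q_1$, where the far-field term $f$ produced by $(1-\eta)u$ is bounded and smooth in $Q_1$ because the kernel is nonsingular off the diagonal. Then $u_\epsilon=\rho_\epsilon*(\eta u)$ is smooth and, by translation invariance, satisfies $(\partial_t-\Delta)^\sigma u_\epsilon=\rho_\epsilon*(|u|^{p-1}u+f)$ in $Q_0$. So the Stinga--Torrea local Schauder estimate applies uniformly in $\epsilon$, and one passes to the limit.

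Your Step 1 is fine; indeed $|w|\le M^p\frac{\Gamma(\beta)}{\Gamma(\beta+\sigma)}(-t)^{-\beta}$. But Step 2 is not established, and the whole argument rests on it. None of your three suggestions closes it.
\begin{itemize}
\item The Fourier--Laplace support argument needs $h$ to be defined and to satisfy the equation on all of $\mathbb{R}^{n+1}$, whereas $h$ lives on $t<0$ and may blow up as $t\to0^-$. Causality does show that the restriction of $h$ to $t<-\delta$ still solves the equation there. That gives no global equation to transform, however: extending by zero destroys the equation for $t\ge-\delta$, and extending by solving forward in time comes with no a priori control.
\item Your ``robust alternative'' is not a Liouville theorem but essentially the content of the proposition itself. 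The Stinga--Torrea theory is formulated for functions on which $(\partial_t-\Delta)^\sigma$ acts, pointwise or through the extension. Getting from a merely bounded distributional solution to that setting is exactly what the paper's mollification step does. Once you have that argument, you can run it on $u$ directly with the bounded right-hand side $|u|^{p-1}u$, and the decomposition $u=w+h$ becomes superfluous.
\item The last sketch is plausible: take the Fourier transform in $x$, where conjugation by $e^{|\xi|^2t}$ turns $(\partial_t+|\xi|^2)^\sigma$ into $\partial_t^\sigma$, then use Marchaud uniqueness in $t$. But it is not carried out. Doing it rigorously for a tempered distribution in $\xi$, including the mode $\xi=0$ where only the $(-t)^{-\beta}$ decay is available, is real work.
\end{itemize}

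If you want to keep your route, prove $h\equiv0$ by duality instead. For $\psi\in C_c^\infty(\mathbb{R}^n\times\mathbb{R}_-)$, let $\varphi=(-\partial_t-\Delta)^{-\sigma}\psi$ be the anti-causal potential. It is smooth, being a locally integrable kernel convolved with $\psi$. It vanishes for $t$ later than the time support of $\psi$, and it decays at the rate of the kernel as $t\to-\infty$ or $|x|\to\infty$. Test the weak equation for $h$ against $\varphi\chi_R$, with $\chi_R$ a parabolic cutoff at scale $R$. Because $|h|\lesssim(-t)^{-\beta}$ with $\beta>0$, the error $\int h\,(-\partial_t-\Delta)^\sigma(\varphi(1-\chi_R))$ tends to zero, so $\int h\psi=0$. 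This gives the representation $u=(\partial_t-\Delta)^{-\sigma}(|u|^{p-1}u)$, and your Step 3 can then be done with potential estimates on $w$ alone. Your explicit bootstrap, which first makes $|u|^{p-1}u$ H\"older, is also needed, implicitly, in the paper's single Schauder step. The trade-off is that the paper's local argument needs no Liouville or uniqueness input at all, only the fact that the far field contributes a smooth bounded forcing.
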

\begin{proof}
Pick arbitrary parabolic cylinders $Q_0'\Subset Q_0\Subset Q_1\Subset Q_2\Subset \mathbb{R}^n\times \mathbb{R}_{-}$.
Take a cut-off function $\eta\in C_c^\infty(Q_2)$ with $0\leq \eta\leq 1$ and $\eta\equiv 1$ in a neighborhood of $Q_1$. Then we have $\eta u\in L^{\infty}(\mathbb{R}^{n+1})\cap L^2(\mathbb{R}^{n+1})$ and $\text{supp}(\eta u)\subset Q_2$.

For any $\varphi \in C_c^{\infty}(Q_1)$, since $u$ is a weak solution, we have
\begin{equation*}
\begin{aligned}
&\iint_{\mathbb{R}^n\times \mathbb{R}_-}\eta u(-\partial_t-\Delta)^{\sigma}\varphi\\
= &\iint_{\mathbb{R}^n\times \mathbb{R}_-}u(-\partial_t-\Delta)^{\sigma}\varphi+\iint_{\mathbb{R}^n\times \mathbb{R}_-}(\eta-1) u(-\partial_t-\Delta)^{\sigma}\varphi\\
=&\iint_{\mathbb{R}^n\times \mathbb{R}_-}|u|^{p-1}u\varphi+c_{n,\sigma}\iint_{Q_1^c}(\eta(x,t)-1) u(x,t)\int_{t}^{+\infty}\int_{\R^n}\frac{\varphi(x,t)-\varphi(y,T)}{(T-t)^{\frac{n}{2}+1+\sigma}}e^{-\frac{|x-y|^2}{4(T-t)}}dydTdxdt.
\end{aligned}
\end{equation*}
Since $\varphi(x,t)=0$ in $Q_1^c$, the second integral above can be further simplified to be
\begin{equation*}
\begin{aligned}
&c_{n,\sigma}\iint_{Q_1^c}(\eta(x,t)-1) u(x,t)\int_{t}^{+\infty}\int_{\R^n}\frac{\varphi(x,t)-\varphi(y,T)}{(T-t)^{\frac{n}{2}+1+\sigma}}e^{-\frac{|x-y|^2}{4(T-t)}}dydTdxdt\\
=&-c_{n,\sigma}\iint_{Q_1^c}(\eta(x,t)-1) u(x,t)\int_{t}^{+\infty}\int_{\R^n}\frac{\varphi(y,T)}{(T-t)^{\frac{n}{2}+1+\sigma}}e^{-\frac{|x-y|^2}{4(T-t)}}dydTdxdt\\
=&c_{n,\sigma}\iint_{\mathbb{R}^n\times\mathbb{R}_{-}}\varphi(y,T)\int_{-\infty}^{T}\int_{\R^n}\frac{1-\eta(x,t)}{(T-t)^{\frac{n}{2}+1+\sigma}}e^{-\frac{|x-y|^2}{4(T-t)}}u(x,t)dxdtdydT.
\end{aligned}
\end{equation*}
It follows that $\eta u$ is a weak solution of
\begin{equation}\label{etaueqn}
(\partial_t-\Delta)^{\sigma}(\eta u)=|u|^{p-1}u+f\qquad \text{in }Q_1,
\end{equation}
where
\begin{equation*}
f(x,t):=c_{n,\sigma}\int_{-\infty}^{t}\int_{\R^n}\frac{1-\eta(y,T)}{(t-T)^{\frac{n}{2}+1+\sigma}}e^{-\frac{|x-y|^2}{4(t-T)}}u(y,T)dydT.
\end{equation*}

\textbf{Claim:} $f\in L^{\infty}(Q_1)$.

Indeed, since $\eta\equiv 1$ in a neighborhood of $Q_1$, there exists a small $\delta>0$ such that for any $(x,t)\in Q_1$ and $(y,T)\in \mathbb{R}^{n+1}$ with
\begin{equation*}
|x-y|<\delta,\qquad 0<t-T<\delta^2,
\end{equation*}
we have $\eta(y,T)=1$. Let $t-T=\tau$ and $x-y=\xi$, then for any $(x,t)\in Q_1$,
\begin{equation*}
\begin{aligned}
|f(x,t)|
&\leq C\int_{-\infty}^{t-\delta^{2}}\int_{\mathbb{R}^n}\frac{|u(y,T)|}{(t-T)^{\frac{n}{2}+1+\sigma}}e^{-\frac{|x-y|^2}{4(t-T)}}dydT\\
&+C\int_{-\infty}^{t}\int_{|x-y|\geq\delta}\frac{|u(y,T)|}{(t-T)^{\frac{n}{2}+1+\sigma}}e^{-\frac{|x-y|^2}{4(t-T)}}dydT\\
&\leq C\int_{\delta^2}^{+\infty}\int_{\mathbb{R}^n}\tau^{-\frac{n}{2}-1-\sigma}e^{-\frac{|\xi|^2}{4\tau}}d\xi d\tau+C\int_{0}^{+\infty}\int_{|\xi|\geq \delta}\tau^{-\frac{n}{2}-1-\sigma}e^{-\frac{|\xi|^2}{4\tau}}d\xi d\tau\\
&\leq C\int_{\delta^2}^{+\infty}\tau^{-1-\sigma}d\tau+C\int_{0}^{+\infty}\tau^{-1-\sigma}e^{-\frac{c\delta^2}{\tau}}d\tau\leq C.
\end{aligned}
\end{equation*}
Similarly, we can show that $f\in C^{\infty}_{x,t}(Q_1)$.

Next, we take a mollifier $\rho_{\epsilon}\in C_c^{\infty}(\mathbb{R}^{n+1})$ with $
\text{supp}\rho_\epsilon\subset B_\epsilon\times (-\epsilon^2,\epsilon^2)$ and $\int_{\mathbb{R}^{n+1}}\rho_{\epsilon}=1$.
We choose an $\epsilon_0>0$ sufficiently small such that $B_{\epsilon_0}(Q_0):=\{(x,t)\in \mathbb{R}^{n+1}:\ |x-y|<\epsilon_0, |t-T|<\epsilon_0^2,\ \forall (y,T)\in Q_0\}\Subset Q_1$, then we have
\begin{equation*}
u_{\epsilon}:=\rho_\epsilon*(\eta u)\in C_c^{\infty}(\mathbb{R}^{n+1})
\end{equation*}
for any $0<\epsilon<\epsilon_0$. It follows that for any $\psi\in C_c^{\infty}(Q_0)$, we have
\begin{equation*}
\begin{aligned}
\iint_{\mathbb{R}^n\times \mathbb{R}_{-}}(\partial_t-\Delta)^{\sigma}u_{\epsilon}\psi
&=\iint_{\mathbb{R}^n\times \mathbb{R}_{-}}u_{\epsilon}(-\partial_t-\Delta)^{\sigma}\psi\\
&=\iint_{\mathbb{R}^n\times \mathbb{R}_{-}}\eta u\tilde \rho_{\epsilon}*((-\partial_t-\Delta)^{\sigma}\psi)\\
&=\iint_{\mathbb{R}^n\times \mathbb{R}_{-}}\eta u(-\partial_t-\Delta)^{\sigma}(\tilde \rho_{\epsilon}*\psi),
\end{aligned}
\end{equation*}
where $\tilde\rho_{\epsilon}(x,t):=\rho_{\epsilon}(-x,-t)$. Here in the second-to-last line, we have used the fact that $(-\partial_t-\Delta)^{\sigma}$ is translation invariant so that it commutes with convolution.

Since $\eta u$ is a weak solution to \eqref{etaueqn} in $Q_1$ and $\text{supp}(\tilde \rho_{\epsilon}*\psi)\subset Q_1$, we then have
\begin{equation*}
\begin{aligned}
\iint_{\mathbb{R}^n\times \mathbb{R}_{-}}\eta u(-\partial_t-\Delta)^{\sigma}(\tilde \rho_{\epsilon}*\psi)=&\iint_{\mathbb{R}^n\times \mathbb{R}_{-}}(|u|^{p-1}u+f)(\tilde \rho_{\epsilon}*\psi)\\
=&\iint_{\mathbb{R}^n\times \mathbb{R}_{-}}\rho_{\epsilon}*(|u|^{p-1}u+f)\cdot \psi.
\end{aligned}
\end{equation*}
That is,
\begin{equation*}
(\partial_t-\Delta)^{\sigma}u_{\epsilon}=\rho_{\epsilon}*(|u|^{p-1}u+f)\qquad a.e.\ \text{ in }Q_0.
\end{equation*}
With the local Schauder estimates \cite[Theorem 1.20]{Stinga-Torrea2017} of Stinga-Torrea, we have
\begin{equation*}
\|u_{\epsilon}\|_{C^{2\sigma+2\epsilon_1,\sigma+\epsilon_1}_{x,t}(Q_0')}\leq C(\|u\|_{L^{\infty}(Q_1)}+\|f\|_{C^{2\epsilon_1,\epsilon_1}_{x,t}(Q_1)})\leq C
\end{equation*}
for some constant $C>0$ and small $\epsilon_1>0$ independent of $\epsilon$. Taking $\epsilon\to 0$ and recalling that $\eta\equiv 1$ in $Q_0'$, we have
\begin{equation*}
u\in C^{2\sigma+2\epsilon_1,\sigma+\epsilon_1}_{x,t}(Q_0')
\end{equation*}
for any $Q_0'\Subset \mathbb{R}^{n}\times \mathbb{R}_{-}$. Since $Q_0'$ is arbitrary, the proof of the proposition is finished.
\end{proof}
As a consequence of Proposition \ref{initial regularity}, we know that if $u$ is a weak solution of \eqref{fraceqn} satisfying \eqref{type I assumption}, then the 
point-wise formula $(\partial_{t}-\Delta)^{\sigma}$ is well defined. The next proposition establishes the higher-order regularities of weak solutions.

\begin{proposition}\label{aprioriu}
Let $u$ be a weak solution to \eqref{fraceqn} satisfying \eqref{type I assumption}.
Then we have
\begin{equation}\label {prop apr 0}
\sup_{\mathbb{R}^n\times \mathbb{R}_{-}}(-t)^{\beta+\frac{1}{2}}|\nabla_{x}u(x,t)|\leq C.
\end{equation}
Furthermore, for any $\frac{1+\sigma}{2}<\theta<\min\{\frac{p}{2}+\sigma,1\}$, we have
\begin{equation}\label{prop apr 1}
\sup_{(x,t)\in\mathbb{R}^n\times \mathbb{R}_{-}}\sup_{|y|<\sqrt{-\frac{2}{3}t}}(-t)^{\beta+\theta} |\nabla u(x+y,t)-\nabla u(x,t)|\leq C|y|^{2\theta-1},    
\end{equation}
\begin{equation}\label{prop apr 2}
\sup_{(x,t)\in\mathbb{R}^n\times \mathbb{R}_{-}}\sup_{|t^\prime|< -\frac{1}{6}t}(-t)^{\beta+\theta}|u(x,t)-u(x,t+t^\prime)|\leq C|t^\prime|^{\theta},
\end{equation}
and
\begin{equation}\label{prop apr 3}
\sup_{(x,t)\in\mathbb{R}^n\times \mathbb{R}_{-}}\sup_{|t^\prime|< -\frac{1}{6}t}(-t)^{\beta+\frac{1}{2}+\theta}|\nabla_{x}u(x,t)-\nabla_{x}u(x,t+t^\prime)|\leq C|t^\prime|^{\theta},
\end{equation}
where $C$ is some constant depending only on $n$, $\sigma$, $p$, $\theta$ and $M$.
\end{proposition}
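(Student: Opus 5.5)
Since the equation is scale invariant, I will reduce everything to a single normalized time. Fix $(x_0,t_0)\in\mathbb{R}^n\times\mathbb{R}_-$, set $\lambda=\sqrt{-t_0}$ and define
\[
w(x,t)=\lambda^{2\beta}u(x_0+\lambda x,\lambda^2 t).
\]
Then $w$ is again a weak solution of \eqref{fraceqn}, and \eqref{type I assumption} holds for $w$ with the same $M$, so $|w(x,t)|\le M(-t)^{-\beta}$. In particular $|w|\le C$ on the slab $\mathbb{R}^n\times(-\infty,-\frac{1}{2}]$ with $C$ depending only on $M$ and $\beta$. Unwinding the scaling, each estimate \eqref{prop apr 0}--\eqref{prop apr 3} is equivalent to a bound that is uniform in $(x_0,t_0)$ for $w$ near the point $(0,-1)$. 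The restrictions $|y|<\sqrt{-\frac{2}{3}t}$ and $|t'|<-\frac{1}{6}t$ correspond to the cylinder $B_{\sqrt{2/3}}\times(-\frac{7}{6},-\frac{5}{6})$, which sits well inside the half-space $\{t<-\frac12\}$. It therefore suffices to prove the following: there is a universal $C$ with
\[
\|w\|_{C^{2\theta,\theta}_{x,t}(Q)}\le C,\qquad \|\nabla_x w\|_{C^{2\theta-1,\theta-\frac12}_{x,t}(Q)}\le C,\qquad \|\nabla_x w(x,\cdot)\|_{C^{\theta}_t(Q)}\le C,
\]
where $Q=B_1\times(-\frac{7}{6},-\frac{5}{6})$.

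To get these bounds I repeat the localization from the proof of Proposition~\ref{initial regularity}, now keeping track of constants. I choose fixed cylinders $Q\Subset Q_1\Subset Q_2\Subset\mathbb{R}^n\times(-\infty,-\frac12)$ and a cutoff $\eta$. Then $\eta w$ solves $(\partial_t-\Delta)^\sigma(\eta w)=|w|^{p-1}w+f$ in $Q_1$. The tail term $f$ is estimated exactly as in the Claim there, except that $|u|\le C$ is replaced by $|w(y,T)|\le M(-T)^{-\beta}$. This weight is bounded for $T\le-\frac12$, and for $T\in(-\frac12,0)$ the kernel is separated, since $t-T\ge\frac13$ there. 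Integrability in $T$ near $0$ requires $\beta<1$, or in general a finite integral of $(-T)^{-\beta}$ against a bounded kernel; I expect this is fine because the tail kernel is smooth and only $\int(-T)^{-\beta}$ near $0$ enters. If $\beta\ge1$ I instead use that $w\in\mathcal L$ by assumption, and the bound stays uniform by scaling. Smooth norms of $f$ come from differentiating the kernel, so $\|f\|_{C^k(Q_1)}\le C(M)$.

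The bootstrap then runs as follows. Proposition~\ref{initial regularity} together with the Stinga--Torrea Schauder estimate gives $w\in C^{2\sigma+\epsilon,\sigma+\epsilon}$ with universal bounds. Hence $|w|^{p-1}w$ is Hölder of order $\min(p,1)\cdot(2\sigma+\epsilon)$, and I feed this back into the Schauder estimate: a right-hand side in $C^{\gamma,\gamma/2}$ gives a solution in $C^{\gamma+2\sigma,(\gamma+2\sigma)/2}$. After finitely many steps the parabolic Hölder order of $w$ exceeds $2\theta>1+\sigma$, but it is capped because $|w|^{p-1}w$ is only $C^{p}$ in $w$. The condition $\theta<\frac p2+\sigma$ reflects exactly this cap, since the right-hand side is at best $C^{p}$ in parabolic scaling, which yields order $p+2\sigma$ for $w$. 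The condition $\theta<1$ avoids integer endpoint issues. Once the parabolic order $2\theta\in(1,2)$ is reached, $\nabla_x w\in C^{2\theta-1}$ in $x$ and $w\in C^{\theta}$ in $t$, which gives \eqref{prop apr 1} and \eqref{prop apr 2}, and \eqref{prop apr 0} follows as well. For \eqref{prop apr 3} I need time regularity of $\nabla_x w$ of order $\theta$, i.e. parabolic order $1+2\theta$ for $w$. I obtain it by applying the Schauder estimate to the difference quotients $\delta_h w=(w(\cdot+h)-w)/|h|^{2\theta-1}$: the operator commutes with translations, $\delta_h w$ solves an equation with right-hand side $\delta_h(|w|^{p-1}w)+\delta_h f$, and this is bounded in a Hölder space because $p>1$ and $w\in C^{2\theta}$. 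Alternatively I can use the identity $\partial_{x_i}$-commutation and apply the estimate to $\partial_{x_i}w$ directly, whose right-hand side $p|w|^{p-1}\partial_iw$ is Hölder of small order.

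I expect the main obstacle to be this last step, \eqref{prop apr 3}. It requires $\sigma+\theta$ orders of time regularity beyond the right-hand side. The right-hand side $p|w|^{p-1}\nabla w$ has Hölder regularity of order only $\min(p-1,1)\cdot 2\theta$, so I must check that the Schauder gain $2\sigma$ suffices. This is where the lower bound $\theta>\frac{1+\sigma}{2}$ should enter. I also have to make sure that the tail of $\nabla_x w$, which is controlled only through \eqref{prop apr 0} at earlier times, is integrable against the kernel near $T=0$, uniformly in the scaling.
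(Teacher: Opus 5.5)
\textbf{The genuine gap is \eqref{prop apr 3}.} For \eqref{prop apr 0}--\eqref{prop apr 2} your route is the paper's: rescale to a fixed time, localize, apply the Stinga--Torrea local Schauder estimate and iterate to a uniform $C^{2\theta,\theta}_{x,t}$ bound. The cap $2\theta<p+2\sigma$ comes from the $C^{p}$ regularity of $|w|^{p-1}w$. The paper applies the Schauder estimate directly to the translate, extended by zero in the future, instead of redoing the cutoff argument, but this is the same thing.

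One correction in your tail estimate. $(\partial_t-\Delta)^\sigma w(x,t)$ only involves $w(y,T)$ with $T<t$, so for $t<-\frac12$ the strip $T\in(-\frac12,0)$ never enters; your ``$t-T\ge\frac13$'' has the wrong sign. On the whole past you have $|w|\le 2^{\beta}M$, so $f$ and its derivatives are bounded by $C(n,\sigma,p,M)$ with no restriction on $\beta$. Your fallback to $w\in\mathcal{L}$, which would not give constants depending only on $M$, is unnecessary, and so is your concern about tails of $\nabla_x w$ near $T=0$.

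You correctly note that \eqref{prop apr 3} requires $C^{\theta}$ time regularity of $\nabla_x w$, i.e.\ parabolic order $1+2\theta>2+\sigma$ for $w$. Neither of your mechanisms can produce this. Differentiate the equation and take $1<p<2$. The right-hand side $p|w|^{p-1}\partial_i w$ then has parabolic H\"older order at most $\min\{p-1,2\theta-1\}$, since near a zero of $w$ where $\nabla w\neq 0$ the factor $|w|^{p-1}$ is no better than $C^{p-1}$. Iterating the Schauder estimate therefore caps $\partial_i w$ at parabolic order $p-1+2\sigma$, i.e.\ a time exponent below $\frac{p-1}{2}+\sigma$; difference quotients hit the same ceiling.

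So you would need $\theta<\frac{p-1}{2}+\sigma$. This removes the upper part of the stated range, and is incompatible with $\theta>\frac{1+\sigma}{2}$ altogether when $p\le 2-\sigma$. The lower bound on $\theta$ works against you here, not for you. In the paper it is only used to make error terms of size $h^{2\theta-1-\sigma}$ vanish in the proof of Proposition~\ref{monotonicity}.

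The paper's proof does not do better on this point. It establishes only the $C^{2\theta,\theta}$ bound and asserts the time estimate for $\nabla_x u$ ``with a similar argument''. What that bound yields after rescaling is your second target, $(-t)^{\beta+\theta}|\nabla_x u(x,t)-\nabla_x u(x,t+t')|\le C|t'|^{\theta-\frac12}$. This weaker, scaling-consistent estimate is all that is used later: in bounding $I_{12}$, a term of size $(1+|z|)h^{\theta-\frac12}|z-w|$ is already admissible. So either restrict to $\theta<\frac{p-1}{2}+\sigma$, or prove \eqref{prop apr 3} in the $|t'|^{\theta-\frac12}$ form, which follows directly from the $C^{2\theta,\theta}$ bound you already have.
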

\begin{proof}
For any $x_0\in \mathbb{R}^n$ and $-\frac{7}{4}<t<-\frac{5}{4}$, we define
\begin{equation*}
\tilde u(y,\tau):=u(x_0+y,\tau+t).
\end{equation*}
Then we have
\begin{equation*}
(\partial_\tau-\Delta)^{\sigma}\tilde u=|\tilde u|^{p-1}\tilde u\quad \text{ in }B_2\times (-1,1)
\end{equation*}
and
\begin{equation*}
|\tilde u(y,\tau)|\leq M(-t-\tau)^{-\beta}\leq 4^{\beta}M
\end{equation*}
for any $\tau\in(-1, 1)$. The local Schauder estimates from Stinga-Torrea \cite[Theorem 1.20]{Stinga-Torrea2017} \footnote{The proof of the local Schauder estimate there actually only needs $\tilde u\in \mathcal{L}(\mathbb{R}^n\times \mathbb{R}_{-})$ and extending $\tilde u$ to be $0$ for $\tau\geq 1$. Since the estimates are interior estimates, we can get the same estimates for $\tilde u$.} yield that
\begin{equation*}
\|\tilde u\|_{C^{2\sigma,\sigma}_{x,t}(B_1\times (-\frac{1}{4},\frac{1}{4}))}\leq C    
\end{equation*}
when $\sigma\neq \frac{1}{2}$ and
\begin{equation*}
\|\tilde u\|_{\Lambda^{1,\frac{1}{2}}_{x,t}(B_1\times (-\frac{1}{4},\frac{1}{4}))}\leq C    
\end{equation*}
when $\sigma=\frac{1}{2}$, for some constant $C$ depending on $n$, $\sigma$, $p$ and $M$. Here $\Lambda^{1,\frac{1}{2}}_{x,t}(B_1\times (-\frac{1}{4},\frac{1}{4}))$ is the natural associated Zygmund space. 

We  choose $\theta$ so that $\frac{1+\sigma}{2}<\theta<\min\{\frac{p}{2}+\sigma,1\}$.
Iterating the local Schauder estimates (without loss of generality, we can take $B_2$ and the range of $t$ larger so that the final range of $x$ contains $B_1$ and the final range of $t$ contains $(-\frac{7}{4},-\frac{5}{4})$), we get that
\begin{equation*}
\|\tilde u\|_{C^{2\theta,\theta}_{x,t}(B_1\times (-\frac{1}{4},\frac{1}{4}))}\leq C    
\end{equation*}
where $C$ is some constant depending only on $n$, $\sigma$, $p$, $\theta$ and $M$. 
Since $x_0$ is chosen arbitrarily, we conclude that
\begin{equation*}
\sup_{\mathbb{R}^n\times (-\frac{7}{4},-\frac{5}{4})}|\nabla_{x}u(x,t)|\leq C.
\end{equation*}
Moreover, for any $x\in \mathbb{R}^n$, $|y|<1$ and $-\frac{7}{4}<t<-\frac{5}{4}$, 
\begin{equation*}
|\nabla_{x} u(x+y,t)-\nabla_{x} u(x,t)|\leq C|y|^{2\theta-1}.
\end{equation*}

Now for any $(x,t)\in \mathbb{R}^n\times \mathbb{R}_{-}$, we define $\lambda=\sqrt{-\frac{2}{3}t}$ and
\begin{equation*}
u_\lambda(y,\tau):=\lambda^{2\beta}u(x+\lambda y,\lambda^2\tau).
\end{equation*}
Then $u_\lambda$ satisfies the same equation and the same bound as $u$. Hence, the previous argument works for $u_\lambda$. In particular, we have
\begin{equation*}
|\nabla_{x}u(x,t)|=\lambda^{-2\beta-1}|\nabla_{y}u_\lambda(0,-\frac{3}{2})|\leq C(-t)^{-\beta-\frac{1}{2}}
\end{equation*}
and for any $|y|<\sqrt{-\frac{2}{3}t}$,
\begin{equation*}
|\nabla_{x} u(x+y,t)-\nabla_{x}u(x,t)|=\lambda^{-2\beta-1}|\nabla_{y}u_{\lambda}(\lambda^{-1}y,-\frac{3}{2})-\nabla_{y}u_\lambda(0,-\frac{3}{2})|\leq C(-t)^{-\beta-\theta}|y|^{2\theta-1}.    
\end{equation*}
We also have for any $|\tau|< \frac{1}{4}$,
\begin{equation*}
|u(x,t-\frac{2}{3}t\tau)-u(x,t)|=\lambda^{-2\beta}|u_{\lambda}(0,\tau-\frac{3}{2})-u_{\lambda}(0,-\frac{3}{2})|\leq C\lambda^{-2\beta}|\tau|^{\theta}.
\end{equation*}
Choosing $\tau=\frac{3}{2}(-t)^{-1}t^\prime$ with $|t^\prime|<-\frac{1}{6}t$, we have
\begin{equation*}
|u(x,t+t^\prime)-u(x,t)|\leq C(-t)^{-\beta-\theta}|t^\prime|^{\theta}.
\end{equation*}

The H\"older time-estimates for $\nabla_{x}u$ follow with a similar argument. Consequently, we obtain the desired result.
\end{proof}

Next, we introduce the following self-similar coordinates
\begin{equation}\label{self-similar coordinates}
    z=\frac{x}{\sqrt{-t}},\ s=-\log(-t),\ w=\frac{y}{\sqrt{-T}},\ S=-\log(-T)
\end{equation}
and write
\begin{equation}\label{self-similar transform}
    u(x,t)=(-t)^{-\beta}v(z,s).
\end{equation}

Then we can rewrite Proposition \ref{aprioriu} into self-similar coordinates.
\begin{prop}\label{aprioriv}
Assume $u$ is a weak solution to \eqref{fraceqn}  and $v(z,s)=(-t)^{\beta}u(x,t)$ is uniformly bounded, then we have
\begin{equation*}
\sup_{\mathbb{R}^n\times \mathbb{R}}|\nabla_{z}v(z,s)|\leq C.
\end{equation*}
Furthermore, for any $\frac{1+\sigma}{2}<\theta<\min\{\frac{p}{2}+\sigma,1\}$, we have
\begin{equation*}
\sup_{(z,s)\in\mathbb{R}^n\times \mathbb{R}}\sup_{|z-w|\leq \frac{2}{3}}|\nabla_{z}v(z,s)-\nabla_{z}v(w,s)|\leq C|z-w|^{2\theta-1},
\end{equation*}

\begin{equation*}
\sup_{(z,s)\in\mathbb{R}^n\times \mathbb{R}}\sup_{|\tau|\leq \frac{1}{8}}|v(z,s)-v(z,s+\tau)|\leq C|\tau|^{\theta}(1+|z|),
\end{equation*}
and
\begin{equation*}
\sup_{(z,s)\in\mathbb{R}^n\times \mathbb{R}}\sup_{|\tau|\leq \frac{1}{8}}|\nabla_{z} v(z,s)-\nabla_{z} v(z,s+\tau)|\leq C |\tau|^{\theta}(1+|z|),
\end{equation*}
where $C$ is some constant depending only on $n$, $\sigma$, $p$, $\theta$ and $M$.
\end{prop}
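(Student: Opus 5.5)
The plan is to translate each estimate of Proposition \ref{aprioriu} through the change of variables \eqref{self-similar coordinates}--\eqref{self-similar transform}. Since $v$ bounded means exactly \eqref{type I assumption}, Proposition \ref{aprioriu} applies. Write $v(z,s)=(-t)^{\beta}u(\sqrt{-t}\,z,t)$ with $-t=e^{-s}$.

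\textbf{Spatial estimates.} We have
\begin{equation*}
\nabla_z v(z,s)=(-t)^{\beta+\frac12}\nabla_x u(x,t),
\end{equation*}
so \eqref{prop apr 0} gives $|\nabla_z v|\le C$ directly. For $|z-w|\le\frac23$, put $x=\sqrt{-t}\,z$ and $y=\sqrt{-t}\,(w-z)$. Then $|y|\le\frac23\sqrt{-t}<\sqrt{-\frac23 t}$, since $\frac23<\sqrt{2/3}$. Applying \eqref{prop apr 1} gives
\begin{equation*}
|\nabla_z v(z,s)-\nabla_z v(w,s)|\le C(-t)^{\beta+\frac12}(-t)^{-\beta-\theta}(-t)^{\theta-\frac12}|z-w|^{2\theta-1}=C|z-w|^{2\theta-1}.
\end{equation*}

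\textbf{Time estimates.} Let $t'=-t(1-e^{-\tau})$, so that $t+t'=-e^{-(s+\tau)}$. For $|\tau|\le\frac18$ we have $|1-e^{-\tau}|\le e^{1/8}-1<\frac16$ and $|t'|\le C|\tau|(-t)$. Now split
\begin{equation*}
v(z,s+\tau)-v(z,s)=\big[(-t-t')^{\beta}-(-t)^{\beta}\big]u(x',t+t')+(-t)^{\beta}\big[u(x',t+t')-u(x,t+t')\big]+(-t)^{\beta}\big[u(x,t+t')-u(x,t)\big],
\end{equation*}
where $x=\sqrt{-t}\,z$ and $x'=\sqrt{-t-t'}\,z$.
\begin{itemize}
\item The first term is bounded by $C|\tau|$, using \eqref{type I assumption} and $-t-t'\sim -t$.
\item The third term is bounded by $C|\tau|^{\theta}$ by \eqref{prop apr 2}.
\item For the middle term, $|x'-x|\le C|\tau|\sqrt{-t}\,|z|$. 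The mean value theorem with \eqref{prop apr 0} at time $t+t'$ bounds it by $C|\tau||z|$.
\end{itemize}
Since $|\tau|\le|\tau|^{\theta}$ for $|\tau|\le 1$, the total is at most $C|\tau|^{\theta}(1+|z|)$.

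For $\nabla_z v$, apply the same three-term split to $(-t)^{\beta+\frac12}\nabla_x u$. The first term is handled by \eqref{prop apr 0}, and the third by \eqref{prop apr 3}. The middle term is $(-t)^{\beta+\frac12}|\nabla_x u(x',t+t')-\nabla_x u(x,t+t')|$. When $|x-x'|<\sqrt{-\frac23(t+t')}$, \eqref{prop apr 1} bounds it by
\begin{equation*}
C(-t)^{\frac12-\theta}\big(|\tau|\sqrt{-t}\,|z|\big)^{2\theta-1}=C(|\tau||z|)^{2\theta-1}.
\end{equation*}
When the spatial displacement is large, i.e. $|\tau||z|\gtrsim1$, the crude bound $2C$ from \eqref{prop apr 0} suffices, since $2C\le C|\tau||z|$ there.

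\textbf{Main obstacle.} The difficulty is this middle term for the gradient: a H\"older power $(|\tau||z|)^{2\theta-1}$ must be dominated by $|\tau|^{\theta}(1+|z|)$. For $|\tau||z|\le 1$ we have $(|\tau||z|)^{2\theta-1}\le|\tau|^{2\theta-1}|z|^{2\theta-1}$, but $2\theta-1<\theta$, so this is not automatically bounded by $|\tau|^{\theta}$. I would first try to use \eqref{prop apr 3} along the curve $x=\sqrt{-t}\,z$ more cleverly. Failing that, I would accept the exponent as stated, taking the paper's $\theta$ as a generic H\"older exponent, possibly after shrinking it. That is harmless downstream, since only some positive H\"older exponent with linear growth in $|z|$ is needed.
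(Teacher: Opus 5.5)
Your spatial estimates and your time estimate for $v$ are correct, and they follow the paper's argument. The change of variables is the same, and so is the three-term split: prefactor, spatial shift via the Lipschitz bound \eqref{prop apr 0}, time shift via \eqref{prop apr 2}. Only the order differs.

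The gap is the fourth estimate, which you do not prove. You reach $C(|\tau||z|)^{2\theta-1}$ for the spatial-shift term and then propose to change the statement. The missing idea is that the exponent you take from Proposition \ref{aprioriu} need not equal the exponent in the conclusion. That proposition holds for every admissible exponent, with a constant depending on it.

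Fix $\theta'\in[\frac{1+\theta}{2},\min\{\frac p2+\sigma,1\})$, depending only on $\theta,p,\sigma$, and apply \eqref{prop apr 1} with $\theta'$. For $|\tau||z|\le c\le 1$ the middle term is then at most $C(|\tau||z|)^{2\theta'-1}\le C(|\tau||z|)^{\theta}\le C|\tau|^{\theta}(1+|z|)$. Your crude bound covers $|\tau||z|>c$. Such a $\theta'$ exists exactly when $\theta<p+2\sigma-1$, so this covers the whole admissible range whenever $p+2\sigma\ge 2$. Using \eqref{prop apr 3} more cleverly cannot help: it moves in time at fixed $x$, whereas the difficulty is a purely spatial displacement of size $\sim|\tau|\sqrt{-t}\,|z|$.

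When $p+2\sigma<2$ and $\theta\ge p+2\sigma-1$, that displacement would require $\nabla_x u$ to be $C^{\theta}$ in space at the self-similar scale. \eqref{prop apr 1} does not provide this for any admissible exponent, since $2\theta'-1<p+2\sigma-1\le\theta$. The paper does not address this either. It only says the time estimate for $\nabla_z v$ follows ``with a similar argument,'' but the analogue of the Lipschitz step used for $v$ would need a bound on $\nabla_x^2 u$, which is not available. So your diagnosis is correct, and the paper's proof has the same hole in that regime.

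Your justification for the fallback is too loose. In the estimate of $I_{12}$ in Proposition \ref{monotonicity}, one needs $h^{\gamma}|z-w|\le h^{\theta-\frac12}|z-w|$. That is, the time exponent $\gamma$ for $\nabla_z v$ must satisfy $\gamma\ge\theta-\frac12$; your $\gamma=2\theta-1$ qualifies, but an arbitrary positive exponent does not. Also, $\theta$ cannot be lowered below $\frac{1+\sigma}{2}$, since that is what makes the error terms $O(h^{2\theta-1-\sigma})$ vanish.
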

\begin{proof}
The gradient estimates follow directly from Proposition \ref{aprioriu}. In the following, we will prove the H\"older estimates.

For any $z,w\in \mathbb{R}^n$ with $|z-w|\leq \frac{2}{3}$, we have
\begin{equation*}
   |e^{-\frac{s}{2}}z-e^{-\frac{s}{2}}w|\leq \frac{2}{3}e^{-\frac{s}{2}}<\sqrt{-\frac{2}{3}t}.
\end{equation*}
From \eqref{prop apr 1}, we know that
\begin{equation*}
\sup_{(x,t)\in\mathbb{R}^n\times \mathbb{R}_{-}}\sup_{|y|<\sqrt{-\frac{2}{3}t}}(-t)^{\beta+\theta} |\nabla_{x}u(x+y,t)-\nabla_{x}u(x,t)|\leq C |y|^{2\theta-1}.    
\end{equation*}
where $C$ is some constant depending only on $n$, $\sigma$, $p$, $\theta$ and $M$. Thus
\begin{equation*}
\begin{aligned}
|\nabla _z v(z,s)-\nabla_z v(w,s)|
&=e^{-(\beta+\frac{1}{2})s}|\nabla_x u(e^{-\frac{s}{2}}z,-e^{-s})-\nabla_x u(e^{-\frac{s}{2}}w,-e^{-s})|\\
&\leq Ce^{-(\beta+\frac{1}{2})s}e^{(\beta+\theta)s}\left|e^{-\frac{s}{2}}(z-w)\right|^{2\theta-1}\\
&=C|z-w|^{2\theta-1}.
\end{aligned}
\end{equation*}

For any $z\in \mathbb{R}^n$, $s\in \mathbb{R}$, $|\tau|\leq \frac{1}{8}$, we have
\begin{equation*}
   |e^{-s-\tau}-e^{-s}|<\frac{1}{6}e^{-s}.
\end{equation*}
From \eqref{prop apr 2}, we deduce that
\begin{equation*}
\sup_{(x,t)\in\mathbb{R}^n\times \mathbb{R}_{-}}\sup_{|t^\prime|< -\frac{1}{6}t}(-t)^{\beta+\theta}|u(x,t)-u(x,t+t^\prime)|\leq C|t^\prime|^{\theta}.
\end{equation*}
Hence
\begin{equation*}
\begin{aligned}
|v(z,s)-v(z,s+\tau)|
&=\left|e^{-s\beta}u(e^{-\frac{s}{2}}z,-e^{-s})-e^{-(s+\tau)\beta}u(e^{-\frac{s+\tau}{2}}z,-e^{-s-\tau})\right|\\
&\leq e^{-s\beta}\left|u(e^{-\frac{s}{2}}z,-e^{-s})-u(e^{-\frac{s+\tau}{2}}z,-e^{-s})\right|\\
&+ e^{-s\beta}\left|u(e^{-\frac{s+\tau}{2}}z,-e^{-s})-u(e^{-\frac{s+\tau}{2}}z,-e^{-s-\tau})\right|\\
&+e^{-s\beta}\left|1-e^{-\tau\beta}\right|\cdot \left|u(e^{-\frac{s+\tau}{2}}z,-e^{-s-\tau})\right|\\
&\leq Ce^{-s\beta}e^{(\beta+\frac{1}{2})s}\cdot e^{-\frac{s}{2}}\left|1-e^{-\frac{\tau}{2}}\right|\cdot |z|\\
&+Ce^{-s\beta}e^{(\beta+\theta)s}e^{-s\theta}\left|1-e^{-\tau}\right|^{\theta}\\
&+Ce^{-s\beta}\left|1-e^{-\tau\beta}\right|e^{s\beta}\\
&\leq C(1+|z|)|\tau|^{\theta}.
\end{aligned}
\end{equation*}

 The H\"older estimates in time for $\nabla_{z} v$ follow with a similar argument, hence the proof is finished.
\end{proof}
The next result is concerned with how to express the nonlocal operator $(\partial_t-\Delta)^\sigma u(x,t)$ in the self-similar coordinates. The proof here is inspired by a computation used in \cite[Formula 1.6]{Wei2017}.
\begin{lemma}\label{FracOperatorSelfSim}
Let $\epsilon>0$ and $u\in C^{2\sigma+\epsilon,\sigma+\epsilon}_{x,t,loc}(\mathbb{R}^n\times \mathbb{R}_{-})\cap\mathcal{L}(\mathbb{R}^n\times\mathbb{R}_-)$ be a function satisfying \eqref{type I assumption}. Assume  $v(z,s)$ is defined via \eqref{self-similar coordinates} and \eqref{self-similar transform}, then we have
\begin{equation*}
\begin{aligned}
(\partial_t-\Delta)^\sigma u(x,t)=&e^{(\beta+\sigma)s}
\left[\int_{-\infty}^{0}\int_{\R^n}(v(z,s)-v(w,s+\tau))M_\tau(z,w)dwd\tau+Av(z,s)\right].
\end{aligned}
\end{equation*}
where $\tau=S-s$, $A=\frac{\Gamma(\beta+\sigma)}{\Gamma(\beta)}$ and the kernel $M_\tau (z,w)$ is given by
\begin{equation*}
   M_\tau(z,w)= c_{n,\sigma}\frac{e^{(\beta-\frac{n}{2}-1)\tau}}{(e^{-\tau}-1)^{\frac{n}{2}+1+\sigma}}e^{-\frac{|z-e^{-\frac{\tau}{2}}w|^2}{4(e^{-\tau}-1)}}.
\end{equation*}
Furthermore, if $u$ is self-similar, i.e. $v$ is independent of $s$, then
\begin{equation*}
\begin{aligned}
(\partial_t-\Delta)^\sigma u(x,t)=e^{(\beta+\sigma)s}
&\left[e^{\frac{|z|^2}{4}}\int_{\R^n}(v(z)-v(w))K(z,w)dw+Av(z)\right],
\end{aligned}
\end{equation*}
where the symmetric kernel $K(z,w)$ is given by
\begin{equation*}
\begin{aligned}
K(z,w)=&e^{-\frac{|z|^{2}}{4}}\int_{-\infty}^{0}M_{\tau}(z, w)d\tau\\
=&c_{n,\sigma}\int_{-\infty}^{0}\frac{e^{(\beta-\frac{n}{2}-1)\tau}}{(e^{-\tau}-1)^{\frac{n}{2}+1+\sigma}}\exp\left(-\frac{e^{-\tau}|z-w|^2}{4(e^{-\tau}-1)}-\frac{e^{-\frac{\tau}{2}}}{2(e^{-\frac{\tau}{2}}+1)}\langle z,w \rangle \right)d\tau.
\end{aligned}
\end{equation*}
\end{lemma}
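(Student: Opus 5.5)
The plan is to substitute the self-similar coordinates \eqref{self-similar coordinates}, \eqref{self-similar transform} directly into the pointwise formula \eqref{point wise formula}. We then split $u(x,t)-u(y,T)$ so that the $v(z,s)$ part produces the constant $A$ and the rest gives the kernel $M_\tau$.

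\textbf{Change of variables.} Write $t=-e^{-s}$ and $T=-e^{-S}$ with $S=s+\tau$. The constraint $T<t$ becomes $\tau<0$, and $dT=e^{-S}dS$. Also set $y=e^{-S/2}w$, so $dy=e^{-nS/2}dw$ and $u(y,T)=e^{\beta S}v(w,S)$. Then
\[
t-T=e^{-s}(e^{-\tau}-1),\qquad |x-y|^2=e^{-s}|z-e^{-\tau/2}w|^2 .
\]
We decompose
\[
u(x,t)-u(y,T)=e^{\beta s}\bigl(v(z,s)-v(w,S)\bigr)+v(w,S)\bigl(e^{\beta s}-e^{\beta S}\bigr).
\]
Collecting the powers of $e^{-s}$, the factor from $(t-T)^{-\frac n2-1-\sigma}$, the Jacobian $e^{-(\frac n2+1)S}$ and $e^{\beta s}$ should combine to $e^{(\beta+\sigma)s}$ times $c_{n,\sigma}e^{(\beta-\frac n2-1)\tau}(e^{-\tau}-1)^{-\frac n2-1-\sigma}$ times the Gaussian. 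This gives exactly the $M_\tau$ term; I will check the exponents as a routine step.

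\textbf{The remainder term.} The second piece is handled by undoing the substitution. Write it as
\[
\iint \frac{e^{\beta s}v(z,s)-e^{\beta S}v(w,S)}{(t-T)^{\frac n2+1+\sigma}}\,e^{-\frac{|x-y|^2}{4(t-T)}}\,dy\,dT .
\]
A cleaner route is to add and subtract $(-T)^{-\beta}v(z,s)$, which is a function independent of $y$. This gives
\[
u(x,t)-u(y,T)=\bigl[(-t)^{-\beta}-(-T)^{-\beta}\bigr]v(z,s)+(-T)^{-\beta}\bigl(v(z,s)-v(w,S)\bigr).
\]
The first bracket, integrated in $y$, gives $(\partial_t)^\sigma$ of $(-t)^{-\beta}$ times $v(z,s)$. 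By the known computation behind $\kappa$, namely that $\pm\kappa(-t)^{-\beta}$ solves the equation, this equals $A(-t)^{-\beta-\sigma}=Ae^{(\beta+\sigma)s}$. The Gaussian $y$-integral is $(4\pi(t-T))^{n/2}$, which leaves the one-dimensional Marchaud derivative with $c_{n,\sigma}(4\pi)^{n/2}=1/|\Gamma(-\sigma)|$. Its value on $(-t)^{-\beta}$ is a Beta-integral computation giving $\Gamma(\beta+\sigma)/\Gamma(\beta)$. The second piece carries the factor $(-T)^{-\beta}=e^{\beta S}$, and this produces the exponent $\beta\tau$ in $M_\tau$, as claimed.

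\textbf{Convergence.} Convergence of each separately split integral needs justification. The $C^{2\sigma+\epsilon,\sigma+\epsilon}$ regularity handles the singularity near $\tau=0$, $w$ near $e^{\tau/2}z$. The bound \eqref{type I assumption} handles $\tau\to-\infty$, where $e^{(\beta+\sigma)\tau}$-type decay appears. This is the main technical point, though routine.

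\textbf{Self-similar case.} When $v$ is independent of $s$, integrate $M_\tau$ in $\tau$ and factor out $e^{|z|^2/4}$. To verify the stated form of $K$, expand the exponent:
\[
-\frac{|z|^2}{4}-\frac{|z-e^{-\tau/2}w|^2}{4(e^{-\tau}-1)}
=-\frac{e^{-\tau}|z|^2}{4(e^{-\tau}-1)}-\frac{e^{-\tau}|w|^2}{4(e^{-\tau}-1)}+\frac{e^{-\tau/2}\langle z,w\rangle}{2(e^{-\tau}-1)} .
\]
Rewriting this as $-\frac{e^{-\tau}|z-w|^2}{4(e^{-\tau}-1)}$ plus a $\langle z,w\rangle$ coefficient, that coefficient is
\[
\frac{e^{-\tau/2}-e^{-\tau}}{2(e^{-\tau}-1)}=-\frac{e^{-\tau/2}}{2(e^{-\tau/2}+1)} .
\]
This matches the stated expression, and symmetry of $K$ is then evident.
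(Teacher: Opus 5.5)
Your final argument, adding and subtracting $(-T)^{-\beta}v(z,s)$, is the paper's proof: it splits the integrand into $e^{\beta\tau}\bigl(v(z,s)-v(w,s+\tau)\bigr)$, which gives the $M_\tau$ term, plus $(1-e^{\beta\tau})v(z,s)$, whose Gaussian and Beta integrals give $A$ as in \eqref{IntegralA}, and your check of the exponent in $K$ is correct. Drop the first decomposition in your change-of-variables paragraph: with prefactor $e^{\beta s}$ instead of $e^{\beta S}$ the kernel loses the factor $e^{\beta\tau}$ and the remainder depends on $w$, so it cannot produce the stated formula.
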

\begin{proof}
By the assumptions, the  nonlocal operator $(\partial_t-\Delta)^\sigma u(x,t)$ is well defined. Moreover,
\begin{equation*}
\begin{aligned}
&(\partial_t-\Delta)^\sigma u(x,t)\\
=&c_{n,\sigma}\int_{-\infty}^{t}\int_{\R^n}\frac{u(x,t)-u(y,T)}{(t-T)^{\frac{n}{2}+1+\sigma}}e^{-\frac{|x-y|^2}{4(t-T)}}dydT\\
=&c_{n,\sigma}\int_{-\infty}^{t}\int_{\R^n}\frac{(-t)^{-\beta}v(\frac{x}{\sqrt{-t}}, -\log(-t))-(-T)^{-\beta}v(\frac{y}{\sqrt{-T}}, -\log(-T))}{(t-T)^{\frac{n}{2}+1+\sigma}}e^{-\frac{|x-y|^2}{4(t-T)}}dydT\\
=&c_{n,\sigma}\int_{-\infty}^{s}\int_{\R^n}\frac{e^{\beta s}v(z, s)-e^{\beta S}v(w, S)}{(e^{-S}-e^{-s})^{\frac{n}{2}+1+\sigma}}e^{-\frac{|e^{-\frac{s}{2}}z-e^{-\frac{S}{2}}w|^2}{4(e^{-S}-e^{-s})}}e^{-\frac{n+2}{2}S}dwdS.
\end{aligned}
\end{equation*}
Let $S=s+\tau$, then
\begin{equation*}
\begin{aligned}
&c_{n,\sigma}\int_{-\infty}^{s}\int_{\R^n}\frac{e^{\beta s}v(z, s)-e^{\beta S}v(w, S)}{(e^{-S}-e^{-s})^{\frac{n}{2}+1+\sigma}}e^{-\frac{|e^{-\frac{s}{2}}z-e^{-\frac{S}{2}}w|^2}{4(e^{-S}-e^{-s})}}e^{-\frac{n+2}{2}S}dwdS\\
=&c_{n,\sigma}\int_{-\infty}^{0}\int_{\R^n}\frac{e^{\beta s}v(z, s)-e^{\beta (s+\tau)}v(w, s+\tau)}{(e^{-(s+\tau)}-e^{-s})^{\frac{n}{2}+1+\sigma}}e^{-\frac{|e^{-\frac{s}{2}}z-e^{-\frac{s+\tau}{2}}w|^2}{4(e^{-(s+\tau)}-e^{-s})}}e^{-\frac{n+2}{2}(s+\tau)}dwd\tau\\
=&c_{n,\sigma}e^{(\beta+\sigma)s}\int_{-\infty}^{0}\int_{\R^n}\frac{v(z, s)-e^{\beta\tau}v(w, s+\tau)}{(e^{-\tau}-1)^{\frac{n}{2}+1+\sigma}}e^{-\frac{|z-e^{-\frac{\tau}{2}}w|^2}{4(e^{-\tau}-1)}}e^{-\frac{n+2}{2}\tau}dwd\tau\\
=&c_{n,\sigma}e^{(\beta+\sigma)s}\int_{-\infty}^{0}\int_{\R^n}\frac{v(z, s)-v(w, s+\tau)}{(e^{-\tau}-1)^{\frac{n}{2}+1+\sigma}}e^{-\frac{|z-e^{-\frac{\tau}{2}}w|^2}{4(e^{-\tau}-1)}}e^{(\beta-\frac{n+2}{2})\tau}dwd\tau\\
&+c_{n,\sigma}e^{(\beta+\sigma)s}\left[\int_{-\infty}^{0}\int_{\R^n}\frac{1-e^{\beta\tau}}{(e^{-\tau}-1)^{\frac{n}{2}+1+\sigma}}e^{-\frac{|z-e^{-\frac{\tau}{2}}w|^2}{4(e^{-\tau}-1)}}e^{-\frac{n+2}{2}\tau}dwd\tau\right] v(z, s).
\end{aligned}
\end{equation*}
Notice that
\begin{equation}\label{IntegralA}
\begin{aligned}
&c_{n,\sigma}\int_{-\infty}^{0}\int_{\R^n}\frac{(1-e^{\beta\tau})e^{-(\frac{n}{2}+1)\tau}}{(e^{-\tau}-1)^{\frac{n}{2}+1+\sigma}}e^{-\frac{|z-e^{-\frac{\tau}{2}}w|^2}{4(e^{-\tau}-1)}}dwd\tau\\
=&c_{n,\sigma}\int_{-\infty}^{0}\int_{\R^n}e^{-\frac{e^{-\tau}|w|^2}{4(e^{-\tau}-1)}}dw\frac{(1-e^{\beta\tau})e^{-(\frac{n}{2}+1)\tau}}{(e^{-\tau}-1)^{\frac{n}{2}+1+\sigma}}d\tau\\
=&c_{n,\sigma}(4\pi)^{\frac{n}{2}}\int_{-\infty}^{0}\frac{(1-e^{\beta\tau})e^{-\tau}}{(e^{-\tau}-1)^{1+\sigma}}d\tau=\frac{\Gamma(\beta+\sigma)}{\Gamma(\beta)}=A.    
\end{aligned}
\end{equation}
We complete the proof of the lemma.
\end{proof}
As a consequence of Proposition \ref{initial regularity} and Lemma \ref{FracOperatorSelfSim}, we have the following result.
\begin{corollary}
Let $u$ be a weak solution to \eqref{fraceqn} satisfying \eqref{type I assumption}. If $v$ is given by \eqref{self-similar coordinates} and \eqref{self-similar transform}, then $v$ satisfies the equation
\begin{equation}\label{EmdenFowler}
\int_{-\infty}^{0}\int_{\R^n}(v(z,s)-v(w,s+\tau))M_\tau(z,w)dwd\tau+Av(z,s)=|v(z,s)|^{p-1}v(z,s).    \end{equation}
If we assume further that $u$ is self-similar, i.e. $v$ is independent of $s$, then we have
\begin{equation}\label{selfsimeqn}
T_{\beta,\sigma}v(z)=e^{\frac{|z|^2}{4}}\int_{\R^n}(v(z)-v(w))K(z,w)dw+Av(z)=|v(z)|^{p-1}v(z).
\end{equation}
\end{corollary}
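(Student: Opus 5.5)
This corollary follows by combining the two earlier results; the only work is to justify that the pointwise identity holds everywhere.

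\textbf{Step 1: regularity.} Let $u$ be a weak solution satisfying \eqref{type I assumption}. By Proposition \ref{initial regularity}, $u\in C^{2\sigma+\epsilon,\sigma+\epsilon}_{x,t,loc}(\mathbb{R}^n\times\mathbb{R}_-)$. Moreover $u\in\mathcal{L}(\mathbb{R}^n\times\mathbb{R}_-)$, either as part of the definition of weak solution or directly from the bound $|u|\le M(-T)^{-\beta}$. Indeed, $\int_{\mathbb{R}^n}e^{-|x-y|^2/4(t-T)}dy=(4\pi(t-T))^{n/2}$. After integrating in $y$, the integrand is therefore bounded by $C(-T)^{-\beta}(t-T)^{n/2}/(1+(t-T)^{n/2+1+\sigma})$. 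This is integrable as $T\to-\infty$ because it decays like $|T|^{-\beta-1-\sigma}$. It is integrable near $T=t$ because $(-T)^{-\beta}$ is bounded there.

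\textbf{Step 2: the weak solution is a pointwise solution.} Since $u$ is regular and lies in $\mathcal{L}$, the pointwise operator \eqref{point wise formula} is well defined and continuous in $(x,t)$. For $\varphi\in C_c^\infty$, Fubini and the same symmetrization that defines the adjoint give
\[
\iint (\partial_t-\Delta)^\sigma u\,\varphi=\iint u\,(-\partial_t-\Delta)^\sigma\varphi .
\]
Consequently $(\partial_t-\Delta)^\sigma u=|u|^{p-1}u$ almost everywhere, and by continuity of both sides it holds everywhere.

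\textbf{Step 3: change to self-similar variables.} Apply Lemma \ref{FracOperatorSelfSim}, whose hypotheses are exactly what Step 1 established. The left side becomes
\[
e^{(\beta+\sigma)s}\Big[\int_{-\infty}^0\int_{\mathbb{R}^n}(v(z,s)-v(w,s+\tau))M_\tau(z,w)\,dw\,d\tau+Av(z,s)\Big].
\]
On the right side, \eqref{self-similar transform} gives $|u|^{p-1}u=e^{p\beta s}|v|^{p-1}v$. Since $p\beta=\beta+\sigma$ because $\beta=\sigma/(p-1)$, the factor $e^{(\beta+\sigma)s}$ cancels, and \eqref{EmdenFowler} follows.

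\textbf{Step 4: the self-similar case.} If $u$ is self-similar, then $v$ is independent of $s$. In this case the second formula of Lemma \ref{FracOperatorSelfSim} applies directly: one integrates $M_\tau$ in $\tau$ and uses the definition of $K$. This yields \eqref{selfsimeqn}.

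\textbf{Main obstacle.} The only delicate point is Step 2, identifying the distributional equation with the pointwise one. This requires Fubini to be justified. The double integral converges absolutely once the singular part near the diagonal is handled by the $C^{2\sigma+\epsilon,\sigma+\epsilon}$ regularity, for example through a principal-value cutoff $t-T>\delta$ followed by the limit $\delta\to0$. The tail is controlled by membership in $\mathcal{L}$.
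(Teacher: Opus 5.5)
Your proposal is correct and is essentially the paper's argument: the paper derives this corollary directly from Proposition~\ref{initial regularity}, which makes the pointwise operator well defined, and Lemma~\ref{FracOperatorSelfSim}, with $|u|^{p-1}u=e^{p\beta s}|v|^{p-1}v$ and $p\beta=\beta+\sigma$ cancelling the prefactor. Your Step~2, which identifies the distributional equation with the pointwise one by a truncation-and-Fubini argument, only makes explicit a step the paper leaves implicit.
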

The kernel $M_\tau(z,w)$ can be written as 
\begin{equation*}
    M_\tau(z,w)=m(\tau)p_\tau(z,w),
\end{equation*}
where
\begin{equation}\label{nonlocal kernel}
    m(\tau)=\frac{e^{(\beta+\sigma)\tau}}{|\Gamma(-\sigma)|(1-e^\tau)^{1+\sigma}},\quad p_\tau(z,w)=\frac{1}{(4\pi (1-e^\tau))^{\frac{n}{2}}}e^{-\frac{|w-e^{\frac{\tau}{2}}z|^2}{4(1-e^\tau)}}.
\end{equation}
The function $p_\tau(z,w)$ satisfies the following Chapman–Kolmogorov equation. 
\begin{lemma}\label{CK}
For each fixed $\tau<0$, we have for any $z,w\in \R^n$,
\begin{equation*}
\rho(z)p_\tau(z,w)=\int_{\R^n}\rho(\eta)p_{\tau/2}(\eta,z)p_{\tau/2}(\eta,w)d\eta,
\end{equation*}
where $\rho(z)=e^{-\frac{|z|^2}{4}}$.
\end{lemma}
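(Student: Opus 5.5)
The plan is to prove the identity by a direct Gaussian computation, reducing it to the one-dimensional semigroup (Chapman–Kolmogorov) law for the Ornstein–Uhlenbeck process. Set $a=e^{\tau/2}\in(0,1)$, so $1-e^{\tau}=1-a^2$. Then
\begin{equation*}
p_{\tau/2}(\eta,z)=\frac{1}{(4\pi(1-a))^{n/2}}e^{-\frac{|z-\sqrt a\,\eta|^2}{4(1-a)}}, \qquad p_\tau(z,w)=\frac{1}{(4\pi(1-a^2))^{n/2}}e^{-\frac{|w-az|^2}{4(1-a^2)}}.
\end{equation*}
Everything factorizes over coordinates, so it suffices to treat $n=1$ and take the product.

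The first key step is to rewrite $\rho(\eta)p_{\tau/2}(\eta,z)$ using the reversibility (detailed balance) of the Ornstein–Uhlenbeck kernel with respect to $\rho$. I will verify by completing the square in the exponents that
\begin{equation*}
\rho(\eta)p_{\tau/2}(\eta,z)=\rho(z)p_{\tau/2}(z,\eta).
\end{equation*}
Both sides have exponent $-\frac{1}{4(1-a)}\left(\eta^2+z^2-2\sqrt a\,\eta z\right)$, since $(1-a)\eta^2+(z-\sqrt a\,\eta)^2=\eta^2+z^2-2\sqrt a\,\eta z$, and this expression is symmetric in $(\eta,z)$. The normalizing constants agree as well.

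With this, the right-hand side of the claimed identity equals
\begin{equation*}
\rho(z)\int_{\R^n}p_{\tau/2}(z,\eta)\,p_{\tau/2}(\eta,w)\,d\eta.
\end{equation*}
It then remains to verify the semigroup law $\int p_{\tau/2}(z,\eta)p_{\tau/2}(\eta,w)\,d\eta=p_\tau(z,w)$. Probabilistically, $p_{\tau/2}(z,\cdot)$ is the Gaussian law $N(\sqrt a\,z,\,2(1-a))$. Pushing it through one more step gives a Gaussian with mean $\sqrt a\cdot\sqrt a\,z=az$ and variance $a\cdot 2(1-a)+2(1-a)=2(1-a^2)$. This is exactly the law $p_\tau(z,\cdot)$. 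To keep the argument analytic, I will instead perform the explicit Gaussian integral in $\eta$ (the convolution of two Gaussians).

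I expect no real obstacle. The only care needed is bookkeeping of the constants $(4\pi(1-a))^{-n/2}$ and confirming the variance identity $2a(1-a)+2(1-a)=2(1-a^2)$. That identity is precisely why the intermediate time is $\tau/2$, which corresponds to the factor $a^{1/2}$.
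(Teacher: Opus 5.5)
Your proof is correct, but it is organized differently from the paper's. The paper does a single computation. It writes out the combined exponent of $\rho(\eta)p_{\tau/2}(\eta,z)p_{\tau/2}(\eta,w)$ and completes the square in $\eta$. This produces a Gaussian centered at $\frac{e^{\tau/4}}{1+e^{\tau/2}}(z+w)$ with precision factor $\frac{1+e^{\tau/2}}{4(1-e^{\tau/2})}$, plus the remainder $-\frac{|w-e^{\tau/2}z|^2}{4(1-e^{\tau})}-\frac{|z|^2}{4}$. Integrating out $\eta$ then gives $\rho(z)p_\tau(z,w)$.

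You instead split the identity into two structural facts:
\begin{itemize}
\item the reversibility $\rho(\eta)p_{\tau/2}(\eta,z)=\rho(z)p_{\tau/2}(z,\eta)$ of the Ornstein--Uhlenbeck kernel with respect to $\rho$;
\item the Chapman--Kolmogorov law $\int p_{\tau/2}(z,\eta)p_{\tau/2}(\eta,w)\,d\eta=p_\tau(z,w)$.
\end{itemize}
Both of your checks are right: the symmetric exponent $\eta^2+z^2-2\sqrt a\,\eta z$ with matching constants, and mean $az$ with variance $2a(1-a)+2(1-a)=2(1-a^2)$.

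The paper's route is shorter and fully explicit in one line. Yours explains why the lemma holds: $p_\tau$ is the transition density of a $\rho$-reversible Markov semigroup, consistent with $\mathcal{L}$ being self-adjoint in $L^2_\rho$. It also shows that the equal split is not essential. The same two steps give $\int_{\R^n}\rho(\eta)p_{\tau_1}(\eta,z)p_{\tau_2}(\eta,w)\,d\eta=\rho(z)p_{\tau_1+\tau_2}(z,w)$ for any $\tau_1,\tau_2<0$, since the variance is $e^{\tau_2}\cdot 2(1-e^{\tau_1})+2(1-e^{\tau_2})=2(1-e^{\tau_1+\tau_2})$.

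So your closing remark should be softened. The variance algebra does not force the intermediate time to be $\tau/2$. The paper uses the symmetric split because it lets $B_\tau[f,f]$ be written as the square $m(\tau)\int\rho\,|G_\tau[f]|^2$ in Lemma~\ref{Quadform}, which gives positivity.
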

\begin{proof}
The proof of this result is elementary, we will defer it to Appendix \ref{Appendix A}.
\end{proof}

As mentioned in the introduction, in order to overcome the difficulty that  $T_{\beta,\sigma}$ is a nonlocal operator, we need to give a new interpretation of \eqref{Giga-Kohn Pohozaev identity} and provide a {\sl nonlocal} version of it. In the following, we will study the spectrum of the nonlocal linear operator $T_{\beta,\sigma}$. 

Let $\{h_{\alpha}\}_{\alpha\in \mathbb{N}^n}$ be the $n$-dimensional normalized Hermite polynomials given by
\begin{equation*}
h_{\alpha}(z)=\prod_{j=1}^{n}h_{\alpha_j}(z_j),
\end{equation*}
where $\alpha=(\alpha_1,\dots,\alpha_n)\in \mathbb{N}^n$ is the multi-index and $h_k(x)$ is the $1$-dimensional normalized Hermite polynomial given by
\begin{equation*}
h_{k}(x)=\frac{(-1)^k2^{\frac{k}{2}}}{\sqrt{k!}}e^{\frac{x^2}{4}}\frac{d^k}{dx^k}(e^{-\frac{x^2}{4}}).
\end{equation*}
Recall that $h_{\alpha}$'s are eigenfunctions of the Ornstein-Uhlenbeck operator $\mathcal{L}:=-\Delta+\frac{z}{2}\cdot \nabla$  with
\begin{equation*}
    \mathcal{L}h_\alpha=\frac{|\alpha|}{2}h_{\alpha}.
\end{equation*}
Furthermore, $\{h_{\alpha}\}_{\alpha\in \mathbb{N}^n}$ forms an orthonormal basis of the Hilbert space
\begin{equation*}
L^2_\rho(\mathbb{R}^n):=\{f\in L^2_{loc}(\mathbb{R}^n):\ \int_{\mathbb{R}^n}|f(z)|^2\rho(z)dz<\infty\}
\end{equation*}
with the associated inner product
\begin{equation*}
\langle f,g\rangle_\rho:=(4\pi)^{-\frac{n}{2}}\int_{\mathbb{R}^n}f(z)g(z)\rho(z)dz.
\end{equation*}
Moreover, we have
\begin{equation*}
\langle h_{\alpha},h_{\beta}\rangle_\rho=\delta_{\alpha\beta}.
\end{equation*}

In order to investigate  the spectrum of the nonlocal  linear operator $T_{\beta,\sigma}$, we introduce the space
$$H^{1}_{\sigma, \rho}(\mathbb{R}^{n}):=\{f\in L_{loc}^{2}(\mathbb{R}^{n}):\int_{\mathbb{R}^{n}}\int_{\mathbb{R}^{n}}(f(z)-f(w))^{2}K(z, w) dzdw+\int_{\mathbb{R}^{n}}f^{2}(z)\rho (z)dz<\infty\},$$
then $H^{1}_{\sigma, \rho}(\mathbb{R}^{n})$ is a Hilbert space with respect to the inner product
$$\langle f, g\rangle_{\sigma, \rho}=\int_{\mathbb{R}^{n}}\int_{\mathbb{R}^{n}}(f(z)-f(w))(g(z)-g(w))K(z, w) dzdw+\langle f,g\rangle_\rho.$$
One can check that the embedding $H_{\sigma,\rho}^{1}(\mathbb{R}^{n})\to L^2_\rho(\mathbb{R}^{n})$ is compact. Hence, standard spectral theory yields that $T_{\beta, \sigma}$ has a sequence of discrete eigenvalues.

The next lemma yields that $h_{\alpha}$'s are also eigenfunctions of $T_{\beta,\sigma}$ in $H^{1}_{\sigma, \rho}(\mathbb{R}^{n})$. This property is the main observation we used in the proof of Theorem \ref{main}.
\begin{lemma}\label{Teigen}
For any $\alpha\in \mathbb{N}^n$, we have $h_{\alpha}\in H^{1}_{\sigma, \rho}(\mathbb{R}^{n})$ and
\begin{equation*}
T_{\beta,\sigma}h_{\alpha}=t_{|\alpha|}h_{\alpha},
\end{equation*}
where $t_{|\alpha|}:=\frac{\Gamma(\beta+\sigma+\frac{|\alpha|}{2})}{\Gamma(\beta+\frac{|\alpha|}{2})}$.
\end{lemma}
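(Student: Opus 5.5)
Since $v$ is independent of $s$, I will work with the $\tau$-representation in \eqref{EmdenFowler}, not with $K$ directly. For $v=h_\alpha$ it reads
\begin{equation*}
T_{\beta,\sigma}h_\alpha(z)=\int_{-\infty}^0 m(\tau)\Big(h_\alpha(z)-\int_{\R^n}h_\alpha(w)p_\tau(z,w)\,dw\Big)d\tau+Ah_\alpha(z).
\end{equation*}
The key observation is that $p_\tau(z,\cdot)$ is the Gaussian density with mean $e^{\tau/2}z$ and variance $2(1-e^\tau)$. It is exactly the Mehler kernel of the Ornstein--Uhlenbeck semigroup at time $-\tau$, i.e. $\int p_\tau(z,w)f(w)\,dw=(e^{\tau\mathcal L}f)(z)$. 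Hence
\begin{equation*}
\int_{\R^n} h_\alpha(w)p_\tau(z,w)\,dw=e^{\frac{|\alpha|}{2}\tau}h_\alpha(z).
\end{equation*}
I will verify this directly by factorizing into one-dimensional integrals. In one dimension I will use the generating function $\sum_k \frac{r^k}{\sqrt{k!}}2^{-k/2}h_k(x)=e^{rx-r^2}$ (suitably normalized) and integrate it against the Gaussian: $\int e^{rw-r^2}p_\tau(z,w)\,dw=e^{re^{\tau/2}z+r^2(1-e^\tau)-r^2}=e^{(re^{\tau/2})z-(re^{\tau/2})^2}$. Comparing coefficients of $r^k$ then gives the factor $e^{k\tau/2}$.

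Substituting, with $\lambda=|\alpha|/2$,
\begin{equation*}
T_{\beta,\sigma}h_\alpha=\Big[\int_{-\infty}^0 m(\tau)(1-e^{\lambda\tau})\,d\tau+A\Big]h_\alpha .
\end{equation*}
It remains to evaluate this constant. Recall that \eqref{IntegralA} gives $A=\int_{-\infty}^0 m(\tau)e^{-(\beta+\sigma)\tau}\cdots$; equivalently, from the derivation, $A=\frac{1}{|\Gamma(-\sigma)|}\int_{-\infty}^0\frac{(1-e^{\beta\tau})e^{\sigma\tau}}{(1-e^\tau)^{1+\sigma}}d\tau$. Then
\begin{equation*}
\int_{-\infty}^0 m(\tau)(1-e^{\lambda\tau})\,d\tau+A=\frac{1}{|\Gamma(-\sigma)|}\int_{-\infty}^0\frac{e^{\sigma\tau}\big(1-e^{(\beta+\lambda)\tau}\big)}{(1-e^\tau)^{1+\sigma}}d\tau ,
\end{equation*}
which is the same integral as in \eqref{IntegralA} with $\beta$ replaced by $\beta+\lambda$. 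Substituting $x=e^\tau$ turns it into $\frac{1}{|\Gamma(-\sigma)|}\int_0^1 x^{\sigma-1}(1-x^{b})(1-x)^{-1-\sigma}dx$ with $b=\beta+\lambda$. Integration by parts together with Beta integrals (the same computation the paper already did for $A$) gives $\Gamma(b+\sigma)/\Gamma(b)$. So the eigenvalue is $t_{|\alpha|}$.

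Membership $h_\alpha\in H^1_{\sigma,\rho}$ is the remaining point, and I expect it to be the main technical obstacle. The plan is as follows. First, the interchange of the $w$- and $\tau$-integrals is justified because $h_\alpha$ is a polynomial: near $\tau=0$ the difference $h_\alpha(z)-\int h_\alpha p_\tau$ is $O(|\tau|)$ (by the explicit formula above), which is integrable against $(1-e^\tau)^{-1-\sigma}$ since $\sigma<1$. Second, for the energy I will use the symmetry and Lemma \ref{CK} to rewrite the double integral $\iint(h(z)-h(w))^2K\,dz\,dw$ as $\int_{-\infty}^0 m(\tau)\iint \rho(z)p_\tau(z,w)(h(z)-h(w))^2\,dz\,dw\,d\tau$. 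The inner integral equals $2(4\pi)^{n/2}(1-e^{\lambda\tau})$ by the eigen-relation and orthonormality, so it is $O(|\tau|)$ near $0$ and bounded at $-\infty$. This makes the total finite, indeed equal to $2(4\pi)^{n/2}(t_{|\alpha|}-A)$. The care needed here is in checking Tonelli and the equality between the $K$-form and the $\tau$-form of $T_{\beta,\sigma}$, which holds because everything is nonnegative after squaring.
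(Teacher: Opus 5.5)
Your proposal is correct and is essentially the paper's proof: the $\tau$-representation of $T_{\beta,\sigma}$, the generating-function identity $\int_{\R^n}p_\tau(z,w)h_\alpha(w)\,dw=e^{\frac{|\alpha|}{2}\tau}h_\alpha(z)$ (the paper uses the $n$-dimensional generating function $\Phi(z,\xi)$ directly instead of factorizing), comparison of coefficients, and the Beta-integral evaluation after $y=e^\tau$. Your Tonelli computation $\iint(h_\alpha(z)-h_\alpha(w))^2K(z,w)\,dz\,dw=2(4\pi)^{\frac{n}{2}}(t_{|\alpha|}-A)$ supplies the membership $h_\alpha\in H^1_{\sigma,\rho}(\R^n)$, which the paper states without proof.

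Two minor points. First, the correctly normalized one-dimensional generating function is $e^{rx-r^2}=\sum_k\frac{2^{k/2}r^k}{\sqrt{k!}}h_k(x)$; this does not affect the coefficient comparison. Second, your $O(|\tau|)$ bound only shows that the $\tau$-form converges. It does not justify exchanging it with the $K$-form, which for $\sigma\geq\frac12$ is not absolutely convergent in $w$. As the paper does, you should simply take the $\tau$-form as the working definition of $T_{\beta,\sigma}$.
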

\begin{proof}
We will provide a detailed proof in Appendix \ref{Appendix A}.
\end{proof}

\section{Classification of self-similar solutions}\label{Sec:Pohozaev}
In this section, we establish the Giga-Kohn Pohozaev identity for the fully fractional heat operator in terms of Hermite coefficients.

By our assumption, $v$ is uniformly bounded and measurable. It follows that $v\in L^2_\rho(\mathbb{R}^n)$ so that it can be expanded in terms of Hermite polynomials as $v=\sum_{\alpha\in \mathbb{N}^n}c_{\alpha}h_{\alpha}$. For the coefficients $c_{\alpha}$, we have the following Pohozaev identity.
\begin{prop}\label{GK1}
Suppose $u$ is a self-similar weak solution to \eqref{fraceqn}  and $v(z)=(-t)^{\beta}u((-t)^{\frac{1}{2}}z)$ is uniformly bounded. Write $v=\sum_{\alpha\in \mathbb{N}^n}c_{\alpha}h_{\alpha}$, then for any $n\geq 1$ and $p>1$, we have
\begin{equation}
\begin{aligned}
2\sum_{\alpha\in \mathbb{N}^n}|\alpha|t_{|\alpha|}c_{\alpha}^2+\sum_{\alpha\in \mathbb{N}^n}\sum_{j=1}^{n}\frac{|\alpha|t_{|\alpha|}}{\beta+\frac{|\alpha|}{2}}b_{\alpha,j}c_{\alpha}c_{\alpha+2e_j}=0,
\end{aligned}
\end{equation}
where $b_{\alpha,j}:=\sqrt{(\alpha_j+1)(\alpha_j+2)}$.
\end{prop}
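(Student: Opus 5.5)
The plan is to obtain the identity by testing two \emph{linear} equations satisfied by derivatives of $u$ against each other, chosen so that the nonlinearity (and with it $p$) drops out. Everything is then computed on the Hermite side using Lemma \ref{Teigen}.

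\textbf{Step 1: linearized equations.} The operator $(\partial_t-\Delta)^\sigma$ commutes with $\partial_t$ and with $\partial_{x_j}$. So, by the regularity in Propositions \ref{aprioriu} and \ref{aprioriv}, both $\partial_t u$ and $\partial_{x_j}u$ solve the linearized equation $(\partial_t-\Delta)^\sigma\psi=p|u|^{p-1}\psi$. For self-similar $u$ I write
\begin{equation*}
\partial_t u=(-t)^{-\beta-1}\phi(z),\qquad \phi:=\beta v+\tfrac12 z\cdot\nabla v,
\end{equation*}
and $\partial_{x_j}u=(-t)^{-\beta-\frac12}\partial_j v$. Repeating the computation of Lemma \ref{FracOperatorSelfSim} with $\beta$ replaced by $\beta+1$, respectively $\beta+\frac12$, gives
\begin{equation*}
T_{\beta+1,\sigma}\phi=p|v|^{p-1}\phi,\qquad T_{\beta+\frac12,\sigma}\,\partial_j v=p|v|^{p-1}\partial_j v .
\end{equation*}
The first equation is also the $\lambda$-derivative at $\lambda=1$ of $u_\lambda$, which is constant in $\lambda$.

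\textbf{Step 2: pairing and cancellation of $p$.} Pair the first linearized equation with $v$ in $L^2_\rho$. Using the original equation \eqref{selfsimeqn} and the symmetry of $T_{\beta,\sigma}$ (from the symmetric kernel $K$), I get
\begin{equation*}
\langle T_{\beta+1,\sigma}\phi,v\rangle_\rho=p\langle |v|^{p-1}v,\phi\rangle_\rho=p\langle T_{\beta,\sigma}v,\phi\rangle_\rho .
\end{equation*}
The factor $p$ remains, so I will eliminate it with a second identity of the same form. I pair the $\partial_j v$ equation with $z_j v$ (equivalently, with the raising operator $-\partial_j+\tfrac{z_j}{2}$ applied to $v$) and sum over $j$. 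Integrating by parts in $L^2_\rho$ and using $p|v|^{p-1}v\,\partial_j v=\partial_j(|v|^{p-1}v)\,v$, I again obtain $p$ times a pairing of $|v|^{p-1}v=T_{\beta,\sigma}v$ with a linear expression in $v$ and $z\cdot\nabla v$. A suitable linear combination of the two identities then contains no nonlinear term and no $p$. The surviving factor $1/(\beta+\frac{|\alpha|}{2})$ should come from the ratio $t^{(\beta+1)}_{k}=t_{k}\,\frac{\beta+\sigma+k/2}{\beta+k/2}$ between the eigenvalues of $T_{\beta+1,\sigma}$ and $T_{\beta,\sigma}$ supplied by Lemma \ref{Teigen}.

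\textbf{Step 3: Hermite bookkeeping.} With the normalization $h_k'=\sqrt{k/2}\,h_{k-1}$ and $z\cdot\nabla=2(\mathcal L+\Delta)$, I have
\begin{equation*}
\phi=\sum_\alpha\Big[(\beta+\tfrac{|\alpha|}{2})c_\alpha+\tfrac12\sum_j b_{\alpha,j}c_{\alpha+2e_j}\Big]h_\alpha .
\end{equation*}
This is exactly where the cross terms $b_{\alpha,j}c_\alpha c_{\alpha+2e_j}$ enter, while the diagonal terms produce $|\alpha|t_{|\alpha|}c_\alpha^2$. Inserting these expansions and the eigenvalues $t_{|\alpha|}$ into the combination from Step 2, the diagonal parts should collapse to $2\sum|\alpha|t_{|\alpha|}c_\alpha^2$ and the off-diagonal parts to the stated cross sum.

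\textbf{Main obstacle.} I expect two difficulties. The first is rigor: justifying the pairings and integrations by parts. Here $\phi$ grows like $|z|$ (Proposition \ref{aprioriv}), so I need $\phi\in H^1_{\sigma,\rho}$ and convergence of the Hermite series for $T v$ and $T_{\beta+1,\sigma}\phi$. I would handle this by Gaussian decay of $\rho$ and truncation of the expansions. The second is finding the exact linear combination in Step 2 that kills the $p$-dependence. If the $z_jv$ pairing does not close, my fallback is to pair the $\phi$-equation with $\phi$ itself and the $\partial_j v$ equation with $\partial_j v$, and to compare the two resulting forms $p\langle|v|^{p-1}\phi,\phi\rangle_\rho$ and $p\sum_j\langle|v|^{p-1}\partial_jv,\partial_jv\rangle_\rho$ via $\phi=\beta v+\frac12 z\cdot\nabla v$.
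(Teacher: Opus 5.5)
There is a genuine gap, and it is in Step 2. You treat the leftover factor $p$ as something a second identity must eliminate. The step you flag as the main obstacle, a ``suitable linear combination'' with no $p$, is both unnecessary and impossible. The missing idea is the scaling relation $(p-1)\beta=\sigma$, which you never use: with it, your \emph{first} pairing already proves the proposition.

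Write $\phi=\sum_\alpha d_\alpha h_\alpha$ with $d_\alpha=(\beta+\frac{|\alpha|}{2})c_\alpha+\frac12\sum_j b_{\alpha,j}c_{\alpha+2e_j}$ (your Step 3 is correct), and use $T_{\beta+1,\sigma}h_\alpha=t_{|\alpha|+2}h_\alpha$. Then $\langle T_{\beta+1,\sigma}\phi,v\rangle_\rho=p\langle T_{\beta,\sigma}v,\phi\rangle_\rho$ reads $\sum_\alpha(t_{|\alpha|+2}-p\,t_{|\alpha|})c_\alpha d_\alpha=0$. Precisely because $(p-1)\beta=\sigma$,
\begin{equation*}
p\,t_k-t_{k+2}=\Big(p-\frac{\beta+\sigma+\frac k2}{\beta+\frac k2}\Big)t_k=\frac{p-1}{2}\,\frac{k}{\beta+\frac k2}\,t_k .
\end{equation*}
Dividing by $-\frac{p-1}{2}$ gives $\sum_\alpha\frac{|\alpha|t_{|\alpha|}}{\beta+|\alpha|/2}c_\alpha d_\alpha=0$. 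Inserting $d_\alpha$ and multiplying by $2$ gives exactly the claimed identity. So $p$ is not removed but traded for $\beta$; the target identity depends on $p$ through $\beta$ and $t_k$. This is the same final algebraic step as in the paper.

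The elimination you propose cannot yield anything new. On Hermite polynomials $T_{\beta+\frac12,\sigma}\partial_j=\partial_jT_{\beta,\sigma}$, since both sides give $t_{|\alpha|}\sqrt{\alpha_j/2}\,h_{\alpha-e_j}$. So after one integration by parts in $L^2_\rho$, your $z_jv$ pairing becomes $G=0$ with
\begin{equation*}
G:=\langle T_{\beta,\sigma}v,\tfrac12(|z|^2-2n)v-(p+1)z\cdot\nabla v\rangle_\rho .
\end{equation*}
This is exactly the paper's combination $-\eqref{Test1}+2(p+1)\eqref{Test2}$. Note that no prefactor $p$ survives there, because $p|v|^{p-1}\partial_jv=\partial_j(|v|^{p-1}v)$ absorbs it. With $F:=\sum_\alpha(t_{|\alpha|+2}-p\,t_{|\alpha|})c_\alpha d_\alpha$, a direct computation gives $2F-G=2(\sigma-(p-1)\beta)\sum_\alpha t_{|\alpha|}c_\alpha^2\equiv0$. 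So both pairings are the same Pohozaev identity, and no nontrivial $p$-free combination exists.

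Your fallback fails for a different reason. The quantities $\langle|v|^{p-1}\phi,\phi\rangle_\rho$ and $\sum_j\langle|v|^{p-1}\partial_jv,\partial_jv\rangle_\rho$ are new nonlinear terms that cannot be expressed through $T_{\beta,\sigma}v$. They also cannot be compared with each other, since $(z\cdot\nabla v)^2\neq|z|^2|\nabla v|^2$.

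On rigor, the paper never needs a linearized equation. It tests $T_{\beta,\sigma}v=|v|^{p-1}v\in L^\infty$ directly against $(|z|^2-2n)v$ and $z\cdot\nabla v$. It removes the nonlinearity with the single integration by parts $\int\rho|v|^{p-1}v\,z\cdot\nabla v=\frac{1}{2(p+1)}\int\rho(|z|^2-2n)|v|^{p+1}$. Your route additionally needs two things:
\begin{itemize}
\item the equation $T_{\beta+1,\sigma}\phi=p|v|^{p-1}\phi$, which with $\nabla v$ only $C^{2\theta-1}$ is available only weakly;
\item symmetry of $T_{\beta+1,\sigma}$ against the linearly growing $\phi$.
\end{itemize}
Since $T_{\beta+1,\sigma}(\beta+\frac12z\cdot\nabla)=(\beta+\sigma+\frac12z\cdot\nabla)T_{\beta,\sigma}$, your first identity coincides with the paper's. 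The cleanest repair is to compute it with the paper's test functions and finish with the displayed relation for $p\,t_k-t_{k+2}$.
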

\begin{proof}
The recurrence formula (see \cite{AndrewsAskeyRoy1999}) of Hermite polynomials yields that for any $\alpha\in \mathbb{N}^n$, $j=1,\dots,n$, we have
\begin{equation*}
\partial_j h_{\alpha}=\sqrt{\frac{\alpha_j}{2}}h_{\alpha-e_j}
\end{equation*}
and
\begin{equation*}
z_j h_{\alpha}(z)=\sqrt{2(\alpha_j+1)}h_{\alpha+e_j}+\sqrt{2\alpha_j}h_{\alpha-e_j},
\end{equation*}
where $e_j$ is the unit vector in $\mathbb{R}^n$ whose $j$-th coordinate is $1$ and $0$ elsewhere. It follows that
\begin{equation}\label{z2-2n}
(|z|^2-2n)h_{\alpha}=4|\alpha|h_{\alpha}+2\sum_{j=1}^{n}b_{\alpha,j}h_{\alpha+2e_j}+2\sum_{j=1}^{n}b_{\alpha-2e_j,j}h_{\alpha-2e_j}
\end{equation}
and
\begin{equation}\label{znabla}
z\cdot \nabla h_{\alpha}=|\alpha|h_{\alpha}+\sum_{j=1}^{n}b_{\alpha-2e_j,j}h_{\alpha-2e_j}.
\end{equation}

By \eqref{selfsimeqn}, we see that $T_{\beta,\sigma}v\in L^{\infty}(\mathbb{R}^n)\subset L^2_\rho(\mathbb{R}^n)$. With Lemma \ref{Teigen}, we have
\begin{equation}\label{Tbetasigmaexp}
T_{\beta,\sigma}v=\sum_{\alpha\in \mathbb{N}^n}t_{|\alpha|}c_{\alpha}h_{\alpha}.
\end{equation}
Testing \eqref{selfsimeqn} against $(|z|^2-2n)\rho v$ and $\rho z\cdot \nabla v$ respectively, we can get
\begin{equation}\label{Test1}
\begin{aligned}
0=&\ \langle T_{\beta,\sigma}v, (|z|^2-2n)v\rangle _{\rho}-(4\pi)^{-\frac{n}{2}}\int_{\mathbb{R}^n}\rho (|z|^2-2n)|v|^{p+1}\\
=&\ 4\sum_{\alpha\in \mathbb{N}^n}|\alpha|t_{|\alpha|}c_{\alpha}^2+2\sum_{\alpha\in \mathbb{N}^n}\sum_{j=1}^{n}(t_{|\alpha|}+t_{|\alpha|+2})b_{\alpha,j}c_{\alpha}c_{\alpha+2e_j}\\
&-(4\pi)^{-\frac{n}{2}}\int_{\mathbb{R}^n}\rho (|z|^2-2n)|v|^{p+1}
\end{aligned}
\end{equation}
and
\begin{equation}\label{Test2}
\begin{aligned}
0=&\ \langle T_{\beta,\sigma}v, z\cdot \nabla v\rangle _{\rho}-(4\pi)^{-\frac{n}{2}}\int_{\mathbb{R}^n}\rho |v|^{p-1}z\cdot \nabla v\\
=&\ \sum_{\alpha\in \mathbb{N}^n}|\alpha|t_{|\alpha|}c_{\alpha}^2+\sum_{\alpha\in \mathbb{N}^n}\sum_{j=1}^{n}t_{|\alpha|}b_{\alpha,j}c_{\alpha}c_{\alpha+2e_j}-\frac{(4\pi)^{-\frac{n}{2}}}{2(p+1)}\int_{\mathbb{R}^n}\rho (|z|^2-2n)|v|^{p+1}.
\end{aligned}
\end{equation}
Combining $-\eqref{Test1}+2(p+1)\eqref{Test2}$\footnote{If $\sigma=1$, this is exactly the Pohozaev identity obtained in \cite[Formula 3.11]{Giga-Kohn1985}.}, we obtain
\begin{equation}\label{classical Pohozaev}
0=2(p-1)\sum_{\alpha\in \mathbb{N}^n}|\alpha|t_{|\alpha|}c_{\alpha}^2+2\sum_{\alpha\in \mathbb{N}^n}\sum_{j=1}^{n}(pt_{|\alpha|}-t_{|\alpha|+2})b_{\alpha,j}c_{\alpha}c_{\alpha+2e_j}.
\end{equation}
Recall that $t_k=\frac{\Gamma(\beta+\sigma+\frac{k}{2})}{\Gamma(\beta+\frac{k}{2})}$ and $\beta=\frac{\sigma}{p-1}$, then we have
\begin{equation*}
pt_{|\alpha|}-t_{|\alpha|+2}=\bigg(p-\frac{\beta+\sigma+\frac{|\alpha|}{2}}{\beta+\frac{|\alpha|}{2}}\bigg)t_{|\alpha|}=\frac{p-1}{2}\frac{|\alpha|}{\beta+\frac{|\alpha|}{2}}t_{|\alpha|}
\end{equation*}
and the desired result follows.
\end{proof}

With the help of the Pohozaev identity above, we are able to prove Theorem \ref{main}.
\begin{proof}[Proof of Theorem \ref{main}]
It suffices to prove that: suppose $u\in \mathcal{L}(\mathbb{R}^n\times\mathbb{R}_-)$ is a self-similar solution to \eqref{fraceqn} and $v(z)=(-t)^{\beta}u((-t)^{\frac{1}{2}}z)$ is uniformly bounded, then $v$ must be a constant.    

We expand $v=\sum_{\alpha\in \mathbb{N}^n}c_\alpha h_{\alpha}$. In the following, we will prove $c_{\alpha}=0$ for all $|\alpha|\geq 1$.

From Proposition \ref{GK1} and recall that $b_{\alpha,j}=\sqrt{(\alpha_j+1)(\alpha_j+2)}$, we have
\begin{equation}\label{Poho}
\begin{aligned}
2\sum_{\alpha\in \mathbb{N}^n}|\alpha|t_{|\alpha|}c_{\alpha}^2+\sum_{k=1}^{\infty}\sum_{\alpha\in \mathbb{N}^n,|\alpha|=k}\sum_{j=1}^{n}\frac{kt_k}{\beta+\frac{k}{2}}\sqrt{(\alpha_j+1)(\alpha_j+2)}c_{\alpha}c_{\alpha+2e_j}=0.
\end{aligned}
\end{equation}

For each $k\geq 1$, we apply Young's inequality to get for any $\alpha\in \mathbb{N}^n$ with $|\alpha|=k$, 
\begin{equation}\label{PohoYoung}
\left(\sum_{j=1}^{n}\sqrt{(\alpha_j+1)(\alpha_j+2)}c_{\alpha+2e_j}\right)c_{\alpha}\geq -\epsilon_k c_{\alpha}^2-\frac{1}{4\epsilon_k}\left(\sum_{j=1}^{n}\sqrt{(\alpha_j+1)(\alpha_j+2)}c_{\alpha+2e_j}\right)^2    
\end{equation}
for some $\epsilon_k>0$ to be determined. Then by the Cauchy-Schwarz inequality, we have
\begin{equation*}
\begin{aligned}
\left(\sum_{j=1}^{n}\sqrt{(\alpha_j+1)(\alpha_j+2)}c_{\alpha+2e_j}\right)^2
\leq& \left(\sum_{j=1}^{n}(\alpha_j+1)\right)\left(\sum_{j=1}^{n}(\alpha_j+2)c_{\alpha+2e_j}^2\right)\\
=&(k+n)\sum_{j=1}^{n}(\alpha_j+2)c_{\alpha+2e_j}^2.
\end{aligned}    
\end{equation*}
In addition, 
\begin{equation*}
\begin{aligned}
\sum_{k=1}^{\infty}\sum_{\alpha\in \mathbb{N}^n,|\alpha|=k}\sum_{j=1}^{n}\frac{kt_k(k+n)(\alpha_j+2)}{4\epsilon_k(\beta+\frac{k}{2})}c_{\alpha+2e_j}^2
\leq& \sum_{k=1}^{\infty}\sum_{\alpha\in \mathbb{N}^n,|\alpha|=k+2}\sum_{j=1}^{n}\frac{kt_k(k+n)\alpha_j}{4\epsilon_k(\beta+\frac{k}{2})}c_{\alpha}^2\\
=&\sum_{k=1}^{\infty}\sum_{\alpha\in \mathbb{N}^n,|\alpha|=k+2}\frac{kt_k(k+n)(k+2)}{4\epsilon_k(\beta+\frac{k}{2})}c_{\alpha}^2\\
=&\sum_{k=3}^{\infty}\sum_{\alpha\in \mathbb{N}^n,|\alpha|=k}\frac{(k-2)t_{k-2}(k+n-2)k}{4\epsilon_{k-2}(\beta+\frac{k}{2}-1)}c_{\alpha}^2.
\end{aligned}
\end{equation*}
It follows that
\begin{equation*}
\begin{aligned}
&\sum_{k=1}^{\infty}\sum_{\alpha\in \mathbb{N}^n,|\alpha|=k}\frac{kt_k}{4\epsilon_k(\beta+\frac{k}{2})}\left(\sum_{j=1}^{n}\sqrt{(\alpha_j+1)(\alpha_j+2)}c_{\alpha+2e_j}\right)^2\\
\leq &\sum_{k=3}^{\infty}\sum_{\alpha\in \mathbb{N}^n,|\alpha|=k}\frac{(k-2)(k+n-2)k}{4\epsilon_{k-2}(\beta+\frac{k}{2}-1)}t_{k-2}c_{\alpha}^2. 
\end{aligned}
\end{equation*}
Recall that  $t_k=\frac{\Gamma(\beta+\sigma+\frac{k}{2})}{\Gamma(\beta+\frac{k}{2})}$, we have for each $k\geq 3$, $t_{k-2}=\frac{\beta+\frac{k}{2}-1}{\beta+\sigma+\frac{k}{2}-1}t_k$. Hence
\begin{equation}\label{Pohomix}
\begin{aligned}
&\sum_{k=1}^{\infty}\sum_{\alpha\in \mathbb{N}^n,|\alpha|=k}\frac{kt_k}{4\epsilon_k(\beta+\frac{k}{2})}\left(\sum_{j=1}^{n}\sqrt{(\alpha_j+1)(\alpha_j+2)}c_{\alpha+2e_j}\right)^2 \\
\leq& \sum_{k=3}^{\infty}\sum_{\alpha\in \mathbb{N}^n,|\alpha|=k}\frac{(k-2)(k+n-2)k}{4\epsilon_{k-2}(\beta+\sigma+\frac{k}{2}-1)}t_{k}c_{\alpha}^2. 
\end{aligned}
\end{equation}
Combining \eqref{Poho}, \eqref{PohoYoung} and \eqref{Pohomix}, we obtain that
\begin{equation}\label{3.10}
\sum_{k=1}^{\infty}\sum_{\alpha\in \mathbb{N}^n,|\alpha|=k}\left(2\beta+k-\epsilon_k-\frac{(k-2)(k+n-2)(\beta+\frac{k}{2})}{4\epsilon_{k-2}(\beta+\sigma+\frac{k}{2}-1)}\bm{1}_{k\geq 3}\right)\frac{kt_k}{\beta+\frac{k}{2}}c_{\alpha}^2\leq 0,
\end{equation}
where $$
\bm{1}_{k\geq 3}=\left\{\begin{array}{lll}
0,&\quad\text{if}~k=1, 2,\\
1,&\quad\text{if}~k\geq 3.
\end{array}
\right.$$

In order to finish the proof of Theorem \ref{main}, it suffices to choose $\epsilon_k>0$ so that each coefficient in the summation above is positive. 

We will divide the discussions into two cases.

\textbf{Case 1: $n\leq 2\sigma$ or $1<p<\frac{n+2\sigma}{n-2\sigma}$.}

In this case, we choose $\epsilon_1=\frac{n}{2}+1-\sigma$, $\epsilon_2=\frac{n}{2}+2-\sigma$ and
\begin{equation*}
\epsilon_k=\frac{n}{2}+k-\sigma-\frac{(k-2)(k+n-2)(\beta+\frac{k}{2})}{4\epsilon_{k-2}(\beta+\sigma+\frac{k}{2}-1)}, \ k\geq 3.
\end{equation*}
If $\epsilon_{k}>0$ for all $k\geq 1$, then \eqref{3.10} yields
\begin{equation*}
\sum_{k=1}^{\infty}\sum_{\alpha\in \mathbb{N}^n,|\alpha|=k}(2\beta+\sigma-\frac{n}{2})\frac{kt_k}{\beta+\frac{k}{2}}c_{\alpha}^2\leq 0.
\end{equation*}
Since $n\leq2\sigma$ or $1<p<\frac{n+2\sigma}{n-2\sigma}$, we have $\beta:=\frac{\sigma}{p-1}>\frac{n-2\sigma}{4}$ and we conclude that $c_{\alpha}=0$ for all $|\alpha|\geq 1$.

It remains to check that $\epsilon_k>0$ for each $k\geq 1$. In fact, we will prove inductively that 
\begin{equation*}
\epsilon_k\geq \frac{k}{2}+1-\sigma    
\end{equation*}
for each $k\geq 1$.

Firstly, we can check that it holds for $\epsilon_1$ and $\epsilon_2$. Now suppose $\epsilon_{k-2}\geq \frac{k-2}{2}+1-\sigma$ for some $k\geq 3$, then we have
\begin{equation*}
\epsilon_k\geq \frac{n}{2}+k-\sigma-\frac{(k-2)(k+n-2)(\beta+\frac{k}{2})}{2(k-2\sigma)(\beta+\sigma+\frac{k}{2}-1)}.
\end{equation*}
It suffices to prove
\begin{equation*}
\frac{(k-2)(k+n-2)(\beta+\frac{k}{2})}{2(k-2\sigma)(\beta+\sigma+\frac{k}{2}-1)}\leq \frac{n+k-2}{2}.
\end{equation*}
This is equivalent to 
$$\frac{(k-2)(\beta+\frac{k}{2})}{(k-2\sigma)(\beta+\sigma+\frac{k}{2}-1)}\leq 1$$
or 
\begin{equation}\label{sigma restriction1}
2(1-\sigma)(\beta+\sigma)\geq 0.
\end{equation}
It holds automatically since  $0<\sigma<1$.

Consequently, we complete the proof for this case.

\vspace{3mm}

\textbf{Case 2: $n>2\sigma$ and $p=\frac{n+2\sigma}{n-2\sigma}$.}

In this case, we choose $\epsilon_k=2\beta+\frac{k}{2}=\frac{n+k}{2}-\sigma>0$.

It suffices to prove that each coefficient
\begin{equation*}
\gamma_k:=2\beta+k-\epsilon_k-\frac{(k-2)(k+n-2)(\beta+\frac{k}{2})}{4\epsilon_{k-2}(\beta+\sigma+\frac{k}{2}-1)}\bm{1}_{k\geq 3}>0
\end{equation*}
for all $k\geq 1$.

For $k=1,2$, we have $\gamma_k=\frac{k}{2}>0$. For $k\geq 3$, 
\begin{equation*}
\gamma_k=\frac{k}{2}-\frac{(k-2)(k+n-2)(\beta+\frac{k}{2})}{2(n+k-2-2\sigma)(\beta+\sigma+\frac{k}{2}-1)}.
\end{equation*}
Then $\gamma_{k}>0$ is equivalent to 
\begin{equation}\label{sigmarestriction2}
2\beta(k(1-\sigma)+n-2)+k\sigma(n-2\sigma)>0.
\end{equation}

If $n\geq 2$, then it holds automatically.

If $n=1$, then we have $0<\sigma<\frac{1}{2}$ and
\begin{equation*}
2\beta(k(1-\sigma)+n-2)+k\sigma(n-2\sigma)>2\beta(\frac{k}{2}+n-2)>0    
\end{equation*}
for all $k\geq 3$. Hence we complete the proof for this case.

Combining the two cases above, we finish the proof of Theorem \ref{main}.
\end{proof}
\begin{remark}
The above arguments can also give a spectral proof of \cite[Theorem 1]{Giga-Kohn1985} by taking $\sigma=1$. Indeed, as mentioned in the proof of Proposition \ref{GK1}, \eqref{classical Pohozaev} is exactly Giga-Kohn's Pohozaev identity \eqref{Giga-Kohn Pohozaev identity} when $\sigma=1$. In the proof of Theorem \ref{main}, the assumption that $0<\sigma<1$ is only used in \eqref{sigma restriction1} and \eqref{sigmarestriction2}. When $\sigma=1$, these two conditions still hold. So we can also choose $\epsilon_{k}$ so that the coefficients in \eqref{3.10} are positive.
\end{remark}

\section{Monotonicity formula}\label{Sec:Monotonicity}
In this section, we establish a monotonicity formula for the energy
\begin{equation*}
\begin{aligned}
\mathcal E[v](s)
=&\frac{1}{4}\int_{\R^n}\int_{\R^n} (v(z,s)-v(w,s))^2K(z,w)dwdz\\
&+\frac{1}{2}\int_{-\infty}^{0}B_\tau[v(\cdot,s)-v(\cdot,s+\tau),v(\cdot,s)-v(\cdot,s+\tau)]d\tau\\
&+\frac{A}{2}\int_{\R^n}\rho v^2-\frac{1}{p+1}\int_{\R^n}\rho |v|^{p+1},   
\end{aligned}
\end{equation*}
where the symmetric bilinear form $B_\tau$ is defined as
\begin{equation}
B_\tau[f,g]:=\int_{\R^n}\int_{\R^n}f(z)g(w)M_\tau(z,w)\rho(z)dwdz    \end{equation}
for any $f,g\in L^2_\rho(\mathbb{R}^n)$ and $\tau<0$. We also define the  bilinear form $Q_\tau$ 
\begin{equation}
Q_\tau[f,g]:=\int_{\R^n}\int_{\R^n}f(z)g(w)\frac{\partial M_{\tau}}{\partial \tau}(z,w)\rho(z)dwdz
\end{equation}
for any $f,g\in L^2_\rho(\mathbb{R}^n)$ and $\tau<0$.

First, we show that the bilinear forms $B_\tau$ and $Q_\tau$ are both well-defined, bounded and positive-definite.
\begin{lemma}\label{Quadform}
For each fixed $\tau<0$, we have
\begin{equation*}
|B_\tau[f,g]|\leq  m(\tau)\|f\|_{L^2_\rho(\mathbb{R}^n)}\|g\|_{L^2_\rho(\mathbb{R}^n)},
\end{equation*}
and
\begin{equation*}
|Q_\tau[f,g]|\leq C\left(m'(\tau)+(1-e^\tau)^{-1}m(\tau)\right)\|f\|_{L^2_\rho(\mathbb{R}^n)}\|g\|_{L^2_\rho(\mathbb{R}^n)},
\end{equation*}
for any $f,g\in L^2_\rho(\mathbb{R}^n)$. Moreover, the quadratic forms $B_{\tau}$ and $Q_{\tau}$ are ‌positive definite.
\end{lemma}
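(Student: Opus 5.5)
The plan is to read $B_\tau$ as $m(\tau)$ times a Mehler (Ornstein--Uhlenbeck) semigroup pairing, and to reduce both the bounds and the positivity to the Chapman--Kolmogorov identity of Lemma \ref{CK}.

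\textbf{Step 1: the operator $P_\tau$.} Write $M_\tau=m(\tau)p_\tau$. For $\tau<0$ set
\begin{equation*}
P_\tau g(z):=\int_{\R^n}p_\tau(z,w)g(w)\,dw .
\end{equation*}
This is exactly the Mehler kernel, i.e. $P_\tau=e^{\tau\mathcal L}$ with $\tau<0$. The kernel $p_\tau(z,\cdot)$ is a probability density (a Gaussian with mean $e^{\tau/2}z$ and variance $2(1-e^\tau)$). Moreover, $\rho(z)p_\tau(z,w)$ is symmetric in $(z,w)$; this follows from Lemma \ref{CK}, or directly from the exponent. Then $B_\tau[f,g]=m(\tau)\int\rho fP_\tau g$.

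\textbf{Step 2: the bound on $B_\tau$.} Apply Jensen's inequality to get $(P_\tau g)^2\le P_\tau(g^2)$. Then use the symmetry from Step 1 and $\int p_\tau(z,w)\,dz\cdot$(reversible measure) to obtain
\begin{equation*}
\int\rho (P_\tau g)^2\le\int\rho P_\tau(g^2)=\int\rho g^2 .
\end{equation*}
Cauchy--Schwarz then gives $|B_\tau[f,g]|\le m(\tau)\|f\|\|g\|$ in the (normalized) $L^2_\rho$ norm. Alternatively, in the Hermite basis $P_\tau h_\alpha=e^{\tau|\alpha|/2}h_\alpha$. This is the same computation that underlies Lemma \ref{Teigen}, and it gives $B_\tau[f,g]=m(\tau)(4\pi)^{n/2}\sum e^{\tau|\alpha|/2}f_\alpha g_\alpha$, from which both the bound and the positivity are immediate.

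\textbf{Step 3: positivity of $B_\tau$.} By Lemma \ref{CK},
\begin{equation*}
B_\tau[f,f]=m(\tau)\int\rho(\eta)\Bigl(\int p_{\tau/2}(\eta,z)f(z)\,dz\Bigr)^2d\eta\ \ge 0 ,
\end{equation*}
with equality only if $P_{\tau/2}f=0$. Since $P_{\tau/2}$ is injective (its Hermite multipliers $e^{\tau|\alpha|/4}$ are nonzero), this forces $f=0$.

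\textbf{Step 4: $Q_\tau$.} Differentiate in $\tau$: $\partial_\tau M_\tau=m'(\tau)p_\tau+m(\tau)\partial_\tau p_\tau$. In Hermite terms,
\begin{equation*}
Q_\tau[f,g]\propto\sum\bigl(m(\tau)e^{\tau|\alpha|/2}\bigr)'f_\alpha g_\alpha=\sum\Bigl(m'(\tau)+\tfrac{|\alpha|}{2}m(\tau)\Bigr)e^{\tau|\alpha|/2}f_\alpha g_\alpha .
\end{equation*}
Because $m(\tau)=e^{(\beta+\sigma)\tau}(1-e^\tau)^{-1-\sigma}/|\Gamma(-\sigma)|$ and $\tau<0$, we have $m'(\tau)=m(\tau)\bigl(\beta+\sigma+(1+\sigma)e^\tau/(1-e^\tau)\bigr)>0$. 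So every multiplier is positive, and $Q_\tau$ is positive definite.

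For the bound, estimate $\sup_k \tfrac{k}{2}e^{\tau k/2}\le C(1-e^\tau)^{-1}$. This holds since $\tfrac{k}{2}e^{\tau k/2}\le C/|\tau|$, and $|\tau|^{-1}\le (1-e^\tau)^{-1}$ up to a constant. It gives the stated bound with $C\bigl(m'+(1-e^\tau)^{-1}m\bigr)$.

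\textbf{Main obstacle.} The delicate point is justifying the Hermite diagonalization of the bilinear forms for general $f,g\in L^2_\rho$, including exchanging the $\tau$-derivative with the integral. I would handle this first for finite Hermite sums and then extend by density, using the uniform bounds from Step 2 for each fixed $\tau$. To avoid series altogether, a direct kernel route is available: compute $\partial_\tau p_\tau$ explicitly, note that it equals $-\mathcal L_w p_\tau$ (the backward equation), and bound the resulting Gaussian moments by $C(1-e^\tau)^{-1}p_{\tau}$-type kernels.
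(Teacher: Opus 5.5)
Your proposal is correct. For $B_\tau$ it is essentially the paper's argument. The paper uses Lemma \ref{CK} to write $B_\tau[f,g]=m(\tau)\int\rho\,P_{\tau/2}f\,P_{\tau/2}g\,d\eta$, then applies Cauchy--Schwarz and Jensen with $\int\rho(\eta)p_{\tau/2}(\eta,z)\,d\eta=\rho(z)$; this is equivalent to your bound on $\int\rho\,f\,P_\tau g$.

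For $Q_\tau$ you take a genuinely different, spectral route. The paper stays in physical space. With $G_\tau[f]=P_{\tau/2}f$, it uses $\partial_\tau G_\tau[f]=\frac12\mathcal{L}G_\tau[f]$ and integrates by parts against $\rho$ to get
\begin{equation*}
Q_\tau[f,f]=m'(\tau)\int_{\R^n}\rho\,|G_\tau[f]|^2\,d\eta+m(\tau)\int_{\R^n}\rho\,|\nabla_\eta G_\tau[f]|^2\,d\eta .
\end{equation*}
It then bounds the gradient term by $C(1-e^\tau)^{-1}\|f\|^2$, by differentiating the Gaussian kernel and applying Cauchy--Schwarz. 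Finally it obtains the bilinear bound from $|Q_\tau[f,g]|\le Q_\tau[f,f]^{1/2}Q_\tau[g,g]^{1/2}$.

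Your multipliers $\bigl(m'(\tau)+\frac{|\alpha|}{2}m(\tau)\bigr)e^{\tau|\alpha|/2}$ are exactly the Hermite form of this identity, since $\int\rho|\nabla G|^2=\int\rho\,G\,\mathcal{L}G$. What your route buys:
\begin{itemize}
\item the estimate becomes a one-liner, $\frac k2 e^{\tau k/2}\le (e|\tau|)^{-1}\le (e(1-e^\tau))^{-1}$;
\item the bilinear bound needs no polarization;
\item strict positivity is settled. The paper asserts ``equality iff $f\equiv 0$'' for both forms without proving that $P_{\tau/2}$ is injective, and your nonvanishing multipliers $e^{\tau|\alpha|/4}$ supply exactly that.
\end{itemize}
The paper's route, in turn, never expands a general $L^2_\rho$ function and uses only kernel estimates of the kind reused in Section \ref{Sec:Monotonicity}.

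Two small corrections.

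First, the density step in your last paragraph does not close as written. Passing the Hermite formula from polynomials to general $f,g\in L^2_\rho$ requires continuity of the \emph{kernel-defined} form $Q_\tau$ on $L^2_\rho$. Step 2 does not give this, since it only bounds $B_\tau$. What you do get rigorously, by termwise differentiation of the series for $B_\tau$ (uniform on compact subsets of $(-\infty,0)$), is that $\frac{d}{d\tau}B_\tau[f,g]$ equals your series. Identifying this with $\iint f(z)g(w)\,\partial_\tau M_\tau(z,w)\rho(z)\,dw\,dz$ for arbitrary $L^2_\rho$ data is the same step the paper takes for granted when it writes $Q_\tau[f,f]=\frac{d}{d\tau}B_\tau[f,f]$. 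It is harmless for the bounded functions $\delta_\tau v$ to which the lemma is actually applied, because there the kernel integral converges absolutely.

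Second, the kernel equation is $\partial_\tau p_\tau(z,w)=\mathcal{L}_z p_\tau(z,w)$, in the first variable with a plus sign since $P_\tau=e^{\tau\mathcal{L}}$ for $\tau<0$, as in the paper. It is not $-\mathcal{L}_w p_\tau$. Also, the constant $1$ in the $B_\tau$ bound holds for the unnormalized norm $\int\rho f^2$ used in the paper. With the normalized $\langle\cdot,\cdot\rangle_\rho$ you pick up a factor $(4\pi)^{n/2}$, as your own Hermite formula shows.
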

\begin{proof}
 For each fixed $\tau<0$, we get  from Lemma \ref{CK} that
\begin{equation*}
\begin{aligned}
|B_\tau[f,g]|
&=m(\tau)\left|\int_{\R^n}\int_{\R^n}\int_{\R^n}\rho(\eta)p_{\tau/2}(\eta,z)p_{\tau/2}(\eta,w)f(z)g(w)dwdzd\eta\right|\\
&=m(\tau)\int_{\R^n}\left(\int_{\R^n}p_{\tau/2}(\eta,z)f(z)dz\right)\left(\int_{\R^n}p_{\tau/2}(\eta,z)g(z)dz\right)\rho(\eta)d\eta\\
&\leq m(\tau)\left(\int_{\R^n}\left(\int_{\R^n}p_{\tau/2}(\eta,z)f(z)dz\right)^2\rho(\eta)d\eta\right)^{\frac{1}{2}} \cdot \left(\int_{\R^n}\left(\int_{\R^n}p_{\tau/2}(\eta,z)g(z)dz\right)^2\rho(\eta)d\eta\right)^{\frac{1}{2}}.
\end{aligned}
\end{equation*}
Since 
\begin{equation*}
\int_{\mathbb{R}^n}p_\tau (\eta,z)dz=1
\end{equation*}
and
\begin{equation*}
\int_{\mathbb{R}^n}\rho(\eta)p_\tau (\eta,z)d\eta=\rho(z)
\end{equation*}
for any $\tau<0$, we deduce from Cauchy-Schwarz inequality that
\begin{equation*}
\begin{aligned}
\int_{\R^n}\left(\int_{\R^n}p_{\tau/2}(\eta,z)f(z)dz\right)^2\rho(\eta)d\eta
\leq &\int_{\R^n}\int_{\R^n}  \rho(\eta)p_{\tau/2}(\eta,z)|f(z)|^2dz d\eta \\
= & \int_{\mathbb{R}^n}|f(z)|^2 \rho(z)dz.
\end{aligned}
\end{equation*}
It follows that
\begin{equation*}
|B_\tau[f,g]|\leq m(\tau)\|f\|_{L^2_\rho(\mathbb{R}^n)}\|g\|_{L^2_\rho(\mathbb{R}^n)}.
\end{equation*}
Hence, $B_{\tau}[f,g]$ is well-defined and bounded for any $f,g\in L^2_\rho(\mathbb{R}^{n})$.

We also have
\begin{equation*}
\begin{aligned}
B_\tau[f,f]
&=m(\tau)\int_{\R^n}\int_{\R^n}\int_{\R^n}\rho(\eta)p_{\tau/2}(\eta,z)p_{\tau/2}(\eta,w)f(z)f(w)dwdzd\eta\\
&=m(\tau)\int_{\R^n}\left(\int_{\R^n}p_{\tau/2}(\eta,z)f(z)dz\right)^2\rho(\eta)d\eta\geq 0.
\end{aligned}
\end{equation*}
The equality holds if and only if $f$ is identically zero.

In order to verify the properties of $Q_{\tau}$, we define
\begin{equation*}
G_\tau[f](\eta):=\int_{\R^n}p_{\tau/2}(\eta,z)f(z)dz.
\end{equation*}
It follows that
\begin{equation}\label{Qtauest}
Q_\tau[f,f]=\frac{d}{d\tau}B_\tau[f,f]=m'(\tau)\int_{\R^n}\rho |G_\tau[f]|^2 d\eta+2m(\tau)\int_{\R^n}\rho G_\tau[f] \partial_\tau G_\tau[f] d\eta.
\end{equation}
Notice that
\begin{equation*}
\begin{aligned}
\partial_\tau p_\tau(\eta,z)
&=\left[\frac{ne^\tau}{2(1-e^\tau)}-\frac{e^\tau |z-e^{\frac{\tau}{2}}\eta|^2}{4(1-e^\tau)^2}+\frac{e^{\frac{\tau}{2}}}{4(1-e^\tau)}\langle z-e^{\frac{\tau}{2}}\eta, \eta\rangle\right]p_\tau(\eta,z)\\
&=-\Delta_\eta p_{\tau}(\eta,z)+\frac{1}{2}\eta\cdot \nabla_\eta p_\tau (\eta,z).
\end{aligned}
\end{equation*}
Then
\begin{equation*}
\partial_\tau G_\tau[f]=-\frac{1}{2}\Delta_\eta G_{\tau}[f]+\frac{1}{4}\eta\cdot \nabla_\eta G_\tau[f]. 
\end{equation*}
Plugging it into \eqref{Qtauest}, we have
\begin{equation}\label{QtauG}
\begin{aligned}
Q_\tau[f,f]
&=m'(\tau)\int_{\R^n}\rho |G_\tau[f]|^2 d\eta-m(\tau)\int_{\R^n}\rho G_\tau[f] \Delta_\eta G_\tau[f] d\eta\\
&~~~~+\frac{1}{2}m(\tau)\int_{\R^n}\rho G_\tau[f] \eta \cdot \nabla_\eta G_\tau[f] d\eta\\
&=m'(\tau)\int_{\R^n}\rho |G_\tau[f]|^2 d\eta+m(\tau)\int_{\R^n}\rho |\nabla_\eta G_\tau[f]|^2 d\eta.
\end{aligned}
\end{equation}

\textbf{Claim:} For any $f\in L^2_\rho(\mathbb{R}^n)$, we have
\begin{equation*}
\int_{\R^n}\rho |\nabla_\eta G_\tau[f]|^2 d\eta\leq C(1-e^\tau)^{-1}\|f\|_{L^2_\rho(\mathbb{R}^n)}^2   
\end{equation*}

Indeed, it follows from Cauchy–Schwarz inequality that
\begin{equation*}
\begin{aligned}
|\nabla_\eta G_\tau[f](\eta)|^2
=&\left|\int_{\mathbb{R}^n}\frac{e^{\frac{\tau}{4}}}{2(1-e^{\frac{\tau}{2}})}(z-e^{\frac{\tau}{4}}\eta)p_{\frac{\tau}{2}}(\eta,z)f(z) dz\right|^2\\
\leq &\frac{Ce^{\frac{\tau}{2}}}{(1-e^{\frac{\tau}{2}})^2}\left(\int_{\mathbb{R}^n}|z-e^{\frac{\tau}{4}}\eta|^2p_{\frac{\tau}{2}}(\eta,z)dz\right)\left(\int_{\mathbb{R}^n}p_{\frac{\tau}{2}}(\eta,z)|f(z)|^2dz\right)\\
\leq &C(1-e^{\frac{\tau}{2}})^{-1}\int_{\mathbb{R}^n}p_{\frac{\tau}{2}}(\eta,z)|f(z)|^2dz.
\end{aligned}
\end{equation*}

Then
\begin{equation*}
\begin{aligned}
\int_{\R^n}\rho |\nabla_\eta G_\tau[f]|^2 d\eta
\leq& C (1-e^{\frac{\tau}{2}})^{-1}\int_{\mathbb{R}^n}\left(\int_{\mathbb{R}^n}p_{\frac{\tau}{2}}(\eta,z)\rho(\eta)d\eta\right)|f(z)|^2dz\\
\leq &C (1-e^\tau)^{-1}\|f\|_{L^2_\rho(\mathbb{R}^n)}^2.
\end{aligned}
\end{equation*}

Consequently, by \eqref{QtauG}, we have
\begin{equation*}
|Q_\tau[f,f]|\leq C\left(m'(\tau)+(1-e^\tau)^{-1}m(\tau)\right)\|f\|_{L^2_\rho(\mathbb{R}^n)}^2.
\end{equation*}
For any $f,g\in L^2_\rho(\mathbb{R}^{n})$, we can rewrite $Q_\tau[f,g]$ in terms of a bilinear form associated with \eqref{QtauG} and get
\begin{equation*}
|Q_{\tau}[f,g]|\leq |Q_{\tau}[f,f]|^{\frac{1}{2}}|Q_{\tau}[g,g]|^{\frac{1}{2}}\leq C\left(m'(\tau)+(1-e^\tau)^{-1}m(\tau)\right)\|f\|_{L^2_\rho(\mathbb{R}^n)}\|g\|_{L^2_\rho(\mathbb{R}^n)}.
\end{equation*}
Hence, $Q_{\tau}[f,g]$ is well-defined and bounded for any $f,g\in L^2_\rho(\mathbb{R}^{n})$.

Since $m(\tau), m'(\tau)> 0$ for every $\tau<0$, by \eqref{QtauG}, we have $Q_\tau[f,f]\geq 0$ and the equality holds if and only if $f$ is identically zero.
\end{proof}

\begin{proposition}[Monotonicity formula]\label{monotonicity}
Suppose $u$ is a weak solution to \eqref{fraceqn} satisfying \eqref{type I assumption}. Let $v$ be defined by \eqref{self-similar coordinates} and \eqref{self-similar transform}. Then 
\begin{equation}
\begin{aligned}
&\mathcal{E}[v](a)-\mathcal{E}[v](b)=\ \frac12\int_a^b \int_{-\infty}^{0}Q_\tau[v(\cdot,s)-v(\cdot,s+\tau),v(\cdot,s)-v(\cdot,s+\tau)]d\tau ds\geq 0
\end{aligned}
\end{equation}
for any real number $a<b$. Moreover, if $\mathcal{E}[v](a)$ is a constant function with respect to $a$ on $\mathbb{R}$, then $v$ is independent of $s$.
\end{proposition}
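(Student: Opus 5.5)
The plan is to mimic Giga--Kohn's argument: test the self-similar equation \eqref{EmdenFowler} against $\rho(z)\,v_s(z,s)$, integrate in $z$, and recognize every term as either $\frac{d}{ds}$ of a piece of $\mathcal E[v](s)$ or as the dissipation $\frac12\int_{-\infty}^0Q_\tau[\cdot,\cdot]\,d\tau$. Since $v$ is only Hölder in $s$ (Proposition \ref{aprioriv}), I would first carry out the computation formally. I would then make it rigorous by replacing $v_s$ with the difference quotient $(v(\cdot,s+h)-v(\cdot,s))/h$, or by mollifying in $s$, working directly with the integrated identity on $[a,b]$, and letting $h\to0$. The bounds of Proposition \ref{aprioriv} and Lemma \ref{Quadform} provide the needed domination.

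\textbf{Step 1 (splitting the nonlocal term).} Set $D_\tau(w,s):=v(w,s)-v(w,s+\tau)$ and write
\[
v(z,s)-v(w,s+\tau)=\bigl(v(z,s)-v(w,s)\bigr)+D_\tau(w,s).
\]
For the first piece, integrating in $\tau$ and using $\rho(z)\int_{-\infty}^0M_\tau(z,w)\,d\tau=K(z,w)$ together with the symmetry of $K$ gives
\[
\int\!\!\int (v(z)-v(w))K\,v_s(z)=\tfrac12\int\!\!\int(v(z)-v(w))(v_s(z)-v_s(w))K=\tfrac{d}{ds}\,\tfrac14\int\!\!\int (v(z,s)-v(w,s))^2K .
\]
The local terms are straightforward: $A\int\rho v v_s=\frac{d}{ds}\frac A2\int\rho v^2$ and $\int\rho|v|^{p-1}vv_s=\frac{d}{ds}\frac1{p+1}\int\rho|v|^{p+1}$.

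\textbf{Step 2 (the memory term).} The second piece equals $\int_{-\infty}^0B_\tau[v_s(\cdot,s),D_\tau]\,d\tau$. I would use three facts: $\partial_sD_\tau=v_s(s)-v_s(s+\tau)$, $\partial_\tau D_\tau=-v_s(s+\tau)$, and $B_\tau$ is symmetric. Together they give
\[
B_\tau[D_\tau,v_s(s)]=\tfrac{d}{ds}\tfrac12B_\tau[D_\tau,D_\tau]+B_\tau[D_\tau,v_s(s+\tau)]
=\tfrac{d}{ds}\tfrac12B_\tau[D_\tau,D_\tau]-\tfrac12\partial_\tau\bigl(B_\tau[D_\tau,D_\tau]\bigr)+\tfrac12Q_\tau[D_\tau,D_\tau],
\]
where the last equality uses $\partial_\tau(B_\tau[D,D])=Q_\tau[D,D]+2B_\tau[D,\partial_\tau D]$. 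Integrating over $\tau\in(-\infty,0)$, the boundary terms vanish for the following reasons.
\begin{itemize}
\item As $\tau\to-\infty$: $m(\tau)\sim e^{(\beta+\sigma)\tau}$ while $D_\tau$ stays bounded.
\item As $\tau\to0^-$: Proposition \ref{aprioriv} gives $|D_\tau|\le C|\tau|^\theta(1+|z|)$, so by Lemma \ref{Quadform} $B_\tau[D_\tau,D_\tau]\lesssim m(\tau)|\tau|^{2\theta}\sim|\tau|^{2\theta-1-\sigma}\to0$, since $\theta>\frac{1+\sigma}{2}$.
\end{itemize}
The same choice of $\theta$ makes $\int Q_\tau[D_\tau,D_\tau]\,d\tau$ converge near $0$, because $m'(\tau)+(1-e^\tau)^{-1}m(\tau)\sim|\tau|^{-2-\sigma}$ and $2\theta-2-\sigma>-1$.

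\textbf{Step 3 (conclusion).} Combining Steps 1 and 2 with \eqref{EmdenFowler} gives
\[
0=\frac{d}{ds}\mathcal E[v](s)+\frac12\int_{-\infty}^0Q_\tau[D_\tau,D_\tau]\,d\tau .
\]
Integrating from $a$ to $b$ yields the identity, and nonnegativity follows from the positivity of $Q_\tau$ in Lemma \ref{Quadform}. For rigidity: if $\mathcal E[v]$ is constant, then $Q_\tau[D_\tau(\cdot,s),D_\tau(\cdot,s)]=0$ for a.e. $(s,\tau)$. Positive definiteness then forces $D_\tau\equiv0$, and by continuity $v(\cdot,s)=v(\cdot,s+\tau)$ for all $s,\tau$.

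\textbf{Main obstacle.} The hardest part is the justification in Step 2. It requires the difference-quotient version of the chain rule for $B_\tau[D_\tau,D_\tau]$ and the interchange of $\tau$-, $s$- and $z$-integrals. Uniform integrability near $\tau=0$ must be extracted from the Hölder-in-time bound with the weight $(1+|z|)$, which is harmless against $\rho$, combined with the bounds of Lemma \ref{Quadform}. The $K$-term also needs the gradient bounds from Proposition \ref{aprioriv} to ensure $v(\cdot,s)\in H^1_{\sigma,\rho}$ uniformly in $s$.
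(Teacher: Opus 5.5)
Your proposal is correct and is essentially the paper's argument. The paper tests \eqref{EmdenFowler} against $h^{-1}\delta_h v\,\rho$ with $\delta_h v:=v(\cdot,s)-v(\cdot,s+h)$, splits the nonlocal term into the same-time spatial difference (which gives the $K$-energy) and the memory part $B_\tau[\delta_\tau v,\cdot]$, and implements your $\tau$-integration by parts as the shift $\tau\mapsto\tau-h$, producing $M_{\tau-h}-M_\tau$ and hence $Q_\tau$. All remainders are bounded by $Ch^{2\theta-1-\sigma}$ via Proposition \ref{aprioriv} with $\theta>\frac{1+\sigma}{2}$, as you anticipate. The one detail to adjust is the $K$-term remainder $\frac{1}{4h}\iint(\delta_h v(z)-\delta_h v(w))^2K$: it needs the time-H\"older bound on $\nabla_z v$ together with the $C^{1,2\theta-1}$ spatial bound, not merely $v(\cdot,s)\in H^1_{\sigma,\rho}$ uniformly in $s$.
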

\begin{proof}
For notational simplicity, let us define $\delta_\tau f(z,s):=f(z,s)-f(z,s+\tau)$ for any function $f$. In view of Lemma \ref{FracOperatorSelfSim}, $v$ satisfies
\begin{equation*}
\begin{aligned}
&\int_{-\infty}^{0}\int_{\R^n}(v(z,s)-v(w,s+\tau))M_\tau(z,w)dwd\tau+Av(z,s)=|v(z,s)|^{p-1}v(z,s).
\end{aligned}
\end{equation*}

Fix $a<b$ and pick $0<h<1/10$. Testing the equation above against $h^{-1}\delta_h v(z,s)\rho(z)$ and integrating in $z$ and $s$ \footnote{If $p+2\sigma>2$, we have $v$ is differentiable in $s$. Then we can test the equation against $v_s$ to simplify the computations.}, we have
\begin{equation}\label{mono1}
\begin{aligned}
0=&\int_a^b\int_{-\infty}^{0}\int_{\mathbb{R}^n}\int_{\mathbb{R}^n}(v(z,s)-v(w,s+\tau))h^{-1}\delta_h v(z,s)M_\tau(z,w)\rho(z)dwdzd\tau ds\\
&+A\int_a^b\int_{\R^n}\rho vh^{-1}\delta_h vdzds-\int_a^b\int_{\R^n}\rho |v|^{p-1}vh^{-1}\delta_h vdzds.
\end{aligned}
\end{equation}

For the first term, we can split it into
\begin{equation*}
\begin{aligned}
&\int_a^b\int_{-\infty}^{0}\int_{\mathbb{R}^n}\int_{\mathbb{R}^n}(v(z,s)-v(w,s+\tau))h^{-1}\delta_h v(z,s)M_\tau(z,w)\rho(z)dwdzd\tau ds\\
=&\int_a^b\int_{-\infty}^{0}\int_{\mathbb{R}^n}\int_{\mathbb{R}^n}(v(z,s)-v(w,s))h^{-1}\delta_h v(z,s)M_\tau(z,w)\rho(z)dwdzd\tau ds\\
&+\int_a^b\int_{-\infty}^{0}B_\tau[\delta_\tau v(\cdot,s),h^{-1}\delta_h v(\cdot,s)]d\tau ds\\
=&: I_1+I_2.
\end{aligned}
\end{equation*}

For the term $I_1$, we use the symmetry of $M_\tau(z,w)\rho(z)$ to get
\begin{equation*}
\begin{aligned}
I_1=&\frac{1}{2h}\int_a^b\int_{-\infty}^{0}\int_{\mathbb{R}^n}\int_{\mathbb{R}^n}(v(z,s)-v(w,s))(\delta_h v(z,s)-\delta_h v(w,s))M_\tau(z,w)\rho(z)dwdzd\tau ds\\
=&\frac{1}{4}\int_a^b\int_{-\infty}^{0}\int_{\mathbb{R}^n}\int_{\mathbb{R}^n}\frac{\delta_h(v(z,s)-v(w,s))^2}{h}M_\tau(z,w)\rho(z)dwdzd\tau ds\\
&+\frac{1}{4h}\int_a^b\int_{-\infty}^{0}\int_{\mathbb{R}^n}\int_{\mathbb{R}^n}(\delta_h v(z,s)-\delta_h v(w,s))^2M_\tau(z,w)\rho(z)dwdzd\tau ds\\
=&:I_{11}+I_{12}.
\end{aligned}
\end{equation*}
For $I_{11}$, we have
\begin{equation*}
\begin{aligned}
\lim_{h\to 0}I_{11}=&\frac{1}{4}\int_{-\infty}^{0}\int_{\mathbb{R}^n}\int_{\mathbb{R}^n}(v(z,a)-v(w,a))^2M_\tau(z,w)\rho(z)dwdzd\tau\\
&-\frac{1}{4}\int_{-\infty}^{0}\int_{\mathbb{R}^n}\int_{\mathbb{R}^n}(v(z,b)-v(w,b))^2M_\tau(z,w)\rho(z)dwdzd\tau\\
=&\frac{1}{4}\int_{\mathbb{R}^n}\int_{\mathbb{R}^n}(v(z,a)-v(w,a))^2K(z,w)dwdz\\
&-\frac{1}{4}\int_{\mathbb{R}^n}\int_{\mathbb{R}^n}(v(z,b)-v(w,b))^2K(z,w)dwdz.
\end{aligned}
\end{equation*}
For $I_{12}$, by Proposition \ref{aprioriv} with some $\frac{1+\sigma}{2}<\theta<\min\{\frac{p}{2}+\sigma,1\}$, we have both
\begin{equation*}
\begin{aligned}
|\delta_h v(z,s)-\delta_h v(w,s)|
\leq& |v(z,s)-v(w,s)-\nabla_{z} v(z,s)\cdot (z-w)|\\
&+|v(z,s+h)-v(w,s+h)-\nabla_{z}v(z,s+h)\cdot (z-w)|\\
&+|\nabla_{z}v(z,s+h)-\nabla_{z}v(z,s)|\cdot |z-w|\\
\leq &C\max\{|z-w|^{2\theta}, |z-w|^{2}\}+C(1+|z|)h^{\theta}\cdot|z-w|\\
\end{aligned}
\end{equation*}
and
\begin{equation*}
|\delta_h v(z,s)-\delta_h v(w,s)|\leq |\delta_h v(z,s)|+|\delta_h v(w,s)|\leq C(1+|z|+|w|)h^{\theta}.
\end{equation*}
It follows that
\begin{equation*}
|\delta_h v(z,s)-\delta_h v(w,s)|\leq C(1+|z|+|w|)h^{\theta-\frac{1}{2}}\min\{h^{\frac{1}{2}},|z-w|\}.    
\end{equation*}
Plugging it into $I_{12}$, we have
\begin{equation*}
|I_{12}|\leq C\int_a^b\int_{-\infty}^{0}\int_{\mathbb{R}^n}\int_{\mathbb{R}^n}(1+|z|+|w|)^2h^{2\theta-2}\min\{h,|z-w|^2\}M_\tau(z,w)\rho(z)dwdzd\tau ds.
\end{equation*}
One can check that
\begin{equation}\label{The first estimate}
\int_{\mathbb{R}^n}\int_{\mathbb{R}^n}(1+|z|+|w|)^2p_\tau(z,w)\rho(z)dwdz\leq C
\end{equation}
and
\begin{equation}\label{The first estimate'}
\int_{\mathbb{R}^n}\int_{\mathbb{R}^n}(1+|z|+|w|)^2|z-w|^2p_\tau(z,w)\rho(z)dwdz\leq C(1-e^\tau).
\end{equation}
Thus
\begin{equation*}
\begin{aligned}
|I_{12}|
\leq & Ch^{2\theta-2}\int_a^b\int_{-\infty}^{0}m(\tau)\min\{h,1-e^\tau\}d\tau ds\\
\leq &Ch^{2\theta-1}\int_a^b\int_{-\infty}^{\log(1-h)}\frac{1}{(1-e^{\tau})^{1+\sigma}}d\tau ds+Ch^{2\theta-2}\int_a^b\int_{\log(1-h)}^{0}\frac{1}{(1-e^{\tau})^{\sigma}}d\tau ds\\
\leq & Ch^{2\theta-1-\sigma}.
\end{aligned}
\end{equation*}
It follows that $|I_{12}|\to 0$ as $h\to 0^+$. 

In order to deal with the term $I_2$, we  split it as
\begin{equation*}
\begin{aligned}
I_2=&\frac{1}{h}\int_a^b\int_{-\infty}^{-h}B_\tau[\delta_\tau v(\cdot,s),\delta_h v(\cdot,s)]d\tau ds+\frac{1}{h}\int_a^b\int_{-h}^{0}B_\tau[\delta_\tau v(\cdot,s),\delta_h v(\cdot,s+\tau)]d\tau ds\\
=&:I_{21}+I_{22}.
\end{aligned}
\end{equation*}

 For $I_{21}$, we can further split it as
\begin{equation*}
\begin{aligned}
I_{21}=&\frac{1}{h}\int_a^b\int_{-\infty}^{-h}B_\tau[\delta_\tau v(\cdot,s),\delta_\tau\delta_h v(\cdot,s)]d\tau ds+\frac{1}{h}\int_a^b\int_{-\infty}^{-h}B_\tau[\delta_\tau v(\cdot,s),\delta_h v(\cdot,s+\tau)]d\tau ds\\
=&\frac{1}{h}\int_a^b\int_{-\infty}^{-h}B_\tau[\delta_\tau v(\cdot,s),\delta_h\delta_\tau v(\cdot,s)]d\tau ds+\frac{1}{h}\int_a^b\int_{-\infty}^{-h}B_\tau[\delta_\tau v(\cdot,s),\delta_h v(\cdot,s+\tau)]d\tau ds\\
=&:I_{211}+I_{212}.
\end{aligned}
\end{equation*}

For $I_{211}$, we use the symmetry of $B_\tau$ to obtain
\begin{equation*}
\begin{aligned}
I_{211}=&\frac{1}{2h}\int_a^b\int_{-\infty}^{-h}B_\tau[\delta_\tau v(\cdot,s),\delta_\tau v(\cdot,s)]-B_\tau[\delta_\tau v(\cdot,s+h),\delta_\tau v(\cdot,s+h)]d\tau ds\\
&+\frac{1}{2h}\int_a^b\int_{-\infty}^{-h}B_\tau[\delta_h\delta_\tau v(\cdot,s),\delta_h\delta_\tau v(\cdot,s)]d\tau ds\\
=&:I_{2111}+I_{2112}.
\end{aligned}
\end{equation*}
For $I_{2111}$,  we have
\begin{equation*}
\begin{aligned}
\lim_{h\to 0}I_{2111}=\frac{1}{2}\int_{-\infty}^{0}B_\tau[\delta_\tau v(\cdot,a),\delta_\tau v(\cdot,a)]-B_\tau[\delta_\tau v(\cdot,b),\delta_\tau v(\cdot,b)]d\tau.
\end{aligned}
\end{equation*}
By Proposition \ref{aprioriv}, we have
\begin{equation*}
|\delta_h\delta_\tau v(z,s)|\leq C(1+|z|)\min\{h^\theta, |\tau|^{\theta}\}.
\end{equation*}
Then \eqref{The first estimate} yields
\begin{equation*}
|I_{2112}|\leq Ch^{2\theta-1}\int_{-\infty}^{-h}m(\tau)d\tau\leq Ch^{2\theta-1}\int_{1-e^{-h}}^{1}\xi^{-1-\sigma}(1-\xi)^{\beta+\sigma-1}d\xi\leq Ch^{2\theta-1-\sigma}.
\end{equation*}
Therefore, $|I_{2112}|\to 0$ as $h\to 0$.

For $I_{212}$, we apply the symmetry of $B_\tau$ to get
\begin{equation*}
\begin{aligned}
I_{212}=&\frac{1}{h}\int_a^b\int_{-\infty}^{-h}B_\tau[\delta_\tau v(\cdot,s),\delta_h v(\cdot,s+\tau)]d\tau ds\\
=&\frac{1}{h}\int_a^b\int_{-\infty}^{-h}B_\tau[\delta_\tau v(\cdot,s),\delta_{\tau+h} v(\cdot,s)-\delta_{\tau} v(\cdot,s)]d\tau ds\\
=&\frac{1}{2h}\int_a^b\int_{-\infty}^{-h}B_\tau[\delta_{\tau+h} v(\cdot,s),\delta_{\tau+h} v(\cdot,s)]-B_\tau[\delta_{\tau} v(\cdot,s),\delta_{\tau} v(\cdot,s)]d\tau ds\\
&+\frac{1}{2h}\int_a^b\int_{-\infty}^{-h}B_\tau[\delta_{\tau} v(\cdot,s)-\delta_{\tau+h} v(\cdot,s),\delta_{\tau+h} v(\cdot,s)]d\tau ds\\
&+\frac{1}{2h}\int_a^b\int_{-\infty}^{-h}B_\tau[\delta_{\tau} v(\cdot,s),\delta_{\tau+h} v(\cdot,s)-\delta_{\tau} v(\cdot,s)]d\tau ds\\
=&\frac{1}{2h}\int_a^b\int_{-\infty}^{-h}B_\tau[\delta_{\tau+h} v(\cdot,s),\delta_{\tau+h} v(\cdot,s)]-B_\tau[\delta_{\tau} v(\cdot,s),\delta_{\tau} v(\cdot,s)]d\tau ds\\
&-\frac{1}{2h}\int_a^b\int_{-\infty}^{-h}B_\tau[\delta_{\tau} v(\cdot,s)-\delta_{\tau+h} v(\cdot,s),\delta_{\tau} v(\cdot,s)-\delta_{\tau+h} v(\cdot,s)]d\tau ds\\
=&:I_{2121}+I_{2122}.
\end{aligned}
\end{equation*}
For any $a<s<b$, we know that
\begin{equation}\label{The second key identity}
\begin{aligned}
&\frac{1}{2h}\int_{-\infty}^{-h}B_\tau[\delta_{\tau+h} v(\cdot,s),\delta_{\tau+h} v(\cdot,s)]-B_\tau[\delta_{\tau} v(\cdot,s),\delta_{\tau} v(\cdot,s)]d\tau \\
=&\frac{1}{2h}\int_{-\infty}^{-h}\int_{\mathbb{R}^n}\int_{\mathbb{R}^n}\bigg(\delta_{\tau+h} v(z,s)\delta_{\tau+h} v(w,s)-\delta_{\tau} v(z,s)\delta_{\tau} v(w,s)\bigg)M_{\tau}(z,w)\rho(z)dwdz d\tau \\
=&\frac{1}{2h}\int_{-\infty}^{-h}\int_{\mathbb{R}^n}\int_{\mathbb{R}^n}\delta_{\tau} v(z,s)\delta_{\tau} v(w,s)\bigg(M_{\tau-h}(z,w)-M_{\tau}(z,w)\bigg)\rho(z)dwdz d\tau \\
&+\frac{1}{2h}\int_{-h}^{0}\int_{\mathbb{R}^n}\int_{\mathbb{R}^n}\delta_{\tau} v(z,s)\delta_{\tau} v(w,s)M_{\tau-h}(z,w)\rho(z)dwdz d\tau.
\end{aligned}
\end{equation}
If $\tau\in (-h, 0)$, we get from Proposition \ref{aprioriv} that
\begin{equation*}
|\delta_{\tau}v(z,s)\delta_{\tau}v(w,s)|\leq C(1+|z|)(1+|w|)|\tau|^{2\theta}.
\end{equation*}
Then
\begin{equation}\label{The third estimate}
\begin{aligned}
&\frac{1}{2h}\int_{-h}^{0}\int_{\mathbb{R}^n}\int_{\mathbb{R}^n}\delta_{\tau} v(z,s)\delta_{\tau} v(w,s)M_{\tau-h}(z,w)\rho(z)dwdz d\tau\\
\leq& Ch^{2\theta-1}\int_{-h}^{0}\frac{e^{(\beta+\sigma)(\tau-h)}}{(1-e^{\tau-h})^{1+\sigma}}d\tau\leq Ch^{2\theta-1-\sigma}.
\end{aligned}
\end{equation}
Combining \eqref{The second key identity}  and \eqref{The third estimate}, we conclude that
$$
\begin{aligned}
\lim_{h\to 0}I_{2121}=&-\frac{1}{2}\int_{a}^{b}\int_{-\infty}^{0}\int_{\mathbb{R}^n}\int_{\mathbb{R}^n}\delta_{\tau} v(z,s)\delta_{\tau} v(w,s)\frac{\partial M_{\tau}}{\partial \tau}(z,w)\rho(z)dwdz d\tau\\
=&-\frac{1}{2}\int_a^b\int_{-\infty}^{0}Q_\tau[\delta_\tau v(\cdot, s),\delta_\tau v(\cdot, s)]ds.
\end{aligned}$$
Similarly, we  have
\begin{equation*}
\begin{aligned}
|I_{2122}|
\leq &\frac{C}{h}\int_a^b\int_{-\infty}^{-h}m(\tau)h^{2\theta}d\tau ds\leq Ch^{2\theta-1-\sigma}.
\end{aligned}
\end{equation*}
Therefore, we deduce that $|I_{2122}|\to 0$ as $h\to 0^+$.

For $I_{22}$, we get from Proposition \ref{aprioriv} that
\begin{equation*}
|\delta_\tau v(z,s)\delta_h v(w,s)|\leq C(1+|z|)(1+|w|)|\tau|^{\theta}h^{\theta}.
\end{equation*}
Taking this into \eqref{The first estimate}, we have
\begin{equation*}
\begin{aligned}
|I_{22}|\leq Ch^{\theta-1}\int_a^b\int_{-h}^{0}m(\tau)|\tau|^{\theta}d\tau
\leq Ch^{2\theta-1-\sigma}.
\end{aligned}
\end{equation*}
 Hence, we deduce that $\lim_{h\to 0}I_{22}=0$.

Combining the estimates from $I_{11}$ to $I_{22}$, we obtain the desired result.
\end{proof}

\section{Liouville Theorem}
In this section, we will give the proof of Theorem \ref{Liouville} by combining Theorem \ref{main} and the monotonicity formula derived in the previous section. The argument here is essentially the same as the proof of \cite[Theorem 2']{Giga-Kohn1985}.

Recall that we suppose $u$ is a weak solution of \eqref{fraceqn} satisfying \eqref{type I assumption}
and
\begin{equation*}
    \limsup_{t\to 0^-}(-t)^{\beta}|u(0,t)|>0.
\end{equation*}
Using self-similar coordinates introduced in \eqref{self-similar coordinates} and \eqref{self-similar transform}, Theorem \ref{Liouville} is equivalent to the following Liouville theorem.
\begin{theorem}\label{selfsimLiouville}
 For $n\leq 2\sigma$ or $1<p\leq \frac{n+2\sigma}{n-2\sigma}$, if $u$ is a weak solution to \eqref{fraceqn} satisfying \eqref{type I assumption}. Let $v$ be defined via \eqref{self-similar coordinates} and \eqref{self-similar transform}. If we further assume  
\begin{equation*}
    \limsup_{s\to \infty}|v(0,s)|>0,
\end{equation*}
then $v=\pm \kappa$.
\end{theorem}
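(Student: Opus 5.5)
We follow the scheme of Giga--Kohn \cite[Theorem 2']{Giga-Kohn1985}, using Proposition \ref{monotonicity} to pass to limits along $s\to\pm\infty$ and Theorem \ref{main} to identify those limits. Since $|v|\le M$, the energy $\mathcal{E}[v](s)$ is bounded below by $-\frac{1}{p+1}M^{p+1}\int\rho$. We also need an upper bound. For this we use Proposition \ref{aprioriv}: $|\nabla_z v|\le C$ and $|\delta_\tau v|\le C(1+|z|)\min\{1,|\tau|^\theta\}$. These bound the $K$-term, the $B_\tau$-term (via Lemma \ref{Quadform}, with $\int_{-\infty}^0 m(\tau)\min\{1,|\tau|^{2\theta}\}d\tau<\infty$ since $2\theta>1+\sigma>2\sigma$), and the potential terms, all uniformly in $s$. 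Hence $\mathcal{E}[v]$ is bounded and nonincreasing, so the limits $\mathcal{E}_{\pm}=\lim_{s\to\pm\infty}\mathcal{E}[v](s)$ exist. Moreover
\[
\int_{-\infty}^{\infty}\int_{-\infty}^0 Q_\tau[\delta_\tau v(\cdot,s),\delta_\tau v(\cdot,s)]\,d\tau\, ds<\infty .
\]

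Next we take limits of the translates $v_k(z,s):=v(z,s+s_k)$ for sequences $s_k\to+\infty$ (respectively $-\infty$). The estimates of Proposition \ref{aprioriv} are uniform, so Arzel\`a--Ascoli gives a subsequence converging locally uniformly, together with $\nabla_z v_k$, to some $\bar v$ on $\mathbb{R}^n\times\mathbb{R}$. The limit $\bar v$ solves \eqref{EmdenFowler}: the bound $|v_k|\le M$ and the decay of $M_\tau$ let us apply dominated convergence in the nonlocal term. The dissipation integral over $[s_k+a,s_k+b]$ tends to $0$, and by lower semicontinuity (Fatou), $\int_a^b\int Q_\tau[\delta_\tau\bar v,\delta_\tau\bar v]=0$. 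Lemma \ref{Quadform} gives strict positivity of $Q_\tau$, so $\delta_\tau\bar v\equiv0$ and $\bar v$ is independent of $s$. Then $\bar v$ corresponds to a self-similar solution with bound $M$, and Theorem \ref{main} yields $\bar v\in\{0,\kappa,-\kappa\}$. The same convergence also shows $\mathcal{E}[v_k](s)\to\mathcal{E}[\bar v]$; uniform tails come from the weight $\rho$ and the $m(\tau)$-integrability above. Hence $\mathcal{E}_\pm\in\{\mathcal{E}[0],\mathcal{E}[\pm\kappa]\}$, where $\mathcal{E}[0]=0$ and $\mathcal{E}[\pm\kappa]=(\frac{A}{2}-\frac{\kappa^{p-1}}{p+1})\kappa^2\int\rho=(\frac12-\frac1{p+1})\kappa^{p+1}\int\rho>0$, using $A=\kappa^{p-1}$.

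The hypothesis $\limsup_{s\to\infty}|v(0,s)|>0$ provides a sequence $s_k\to\infty$ along which the limit is nonzero. That limit is $\pm\kappa$, so $\mathcal{E}_+=\mathcal{E}[\kappa]>0$. Monotonicity gives $\mathcal{E}_-\ge\mathcal{E}_+>0$, so $\mathcal{E}_-=\mathcal{E}[\kappa]$ as well. Therefore $\mathcal{E}[v]$ is constant, and the second part of Proposition \ref{monotonicity} shows that $v$ is independent of $s$. Theorem \ref{main} then gives $v\in\{0,\pm\kappa\}$, and the nonvanishing hypothesis excludes $v=0$, so $v=\pm\kappa$.

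The main obstacle is the compactness step: showing that $\mathcal{E}[v_k](s)\to\mathcal{E}[\bar v]$ and that $\bar v$ satisfies the nonlocal equation. Both require controlling the memory integral $\int_{-\infty}^0$ uniformly in $k$, both for large $|\tau|$ and near $\tau=0$. We would handle this using the uniform H\"older-in-time bound $C(1+|z|)|\tau|^\theta$ with $\theta>\frac{1+\sigma}{2}$, which makes $m(\tau)|\tau|^{2\theta}$ integrable near $0$. For large $|\tau|$ we use the boundedness of $v$ and the exponential factor $e^{(\beta+\sigma)\tau}$ in $m$. The Gaussian moments \eqref{The first estimate} dominate the $(1+|z|)$ growth.
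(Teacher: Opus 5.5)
Your proof is correct and follows the paper's Giga--Kohn scheme: the monotonicity formula, compactness of the translates $v(\cdot,\cdot+s_k)$, Theorem \ref{main} to identify the $s$-independent limits as $0$ or $\pm\kappa$, and comparison of the energies at $s=\pm\infty$. The one difference is that you work directly with $\lim_{s\to\pm\infty}\mathcal{E}[v](s)$, which exists because $\mathcal{E}[v]$ is bounded and monotone, together with finiteness of the total dissipation; this lets you skip Proposition \ref{vlim} (existence of $\lim_{s\to\pm\infty}v$), as well as the gap condition $s_{j+1}-s_j\to\pm\infty$ and the sequence-independence argument of Proposition \ref{vindeps}, giving a slightly more economical version of the same argument.
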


Since $u$ satisfies \eqref{type I assumption}, the rescaled solutions
\begin{equation*}
u_{\lambda}(x,t):=\lambda^{2\beta}u(\lambda x,\lambda^2 t)  
\end{equation*}
remain bounded independent of $\lambda$ away from $t=0$.

Let $\lambda_j\to 0$ and  $\lambda_j'\to \infty$. From Proposition \ref{aprioriu}, we can (up to some subsequences) take limits
\begin{equation*}
\lim_{\lambda_j\to 0}u_{\lambda_j}=u_0,\quad \lim_{\lambda_j'\to \infty}u_{\lambda_j'}=u_\infty
\end{equation*}
for some $u_0$ and $u_\infty$. Using self-similar coordinates, they are translated into 
\begin{equation*}
\begin{aligned}
v_{+\infty}(z,s)
&=\lim_{s_j\to \infty}v(z,s+s_j)\\
v_{-\infty}(z,s)
&=\lim_{s_j'\to -\infty}v(z,s+s_j')
\end{aligned}  
\end{equation*}
with $s_j=-\ln \lambda_j$ and $s_j'=-\ln \lambda_j'$.

In the following, we will prove Theorem \ref{selfsimLiouville}. To apply the results in Section \ref{Sec:Pohozaev}, we need the following proposition showing that all possible limits $v_{\pm \infty}$ are independent of $s$.
\begin{proposition}\label{vindeps}
 Consider a monotone increasing (respectively, decreasing) sequence such that $s_j\to \pm \infty$ and $s_{j+1}-s_j\to \pm \infty$. Assume that $v_j(z,s)=v(z,s+s_j)$ converges to a limit $v_{\pm \infty}$ locally uniformly in $\mathbb{R}^n\times \mathbb{R}$. Then the limit $v_{\pm\infty}$ is independent of $s$. Moreover, the energy $\mathcal{E}[v_{\pm\infty}]$ is independent of the choice of the sequence $\{s_j\}$.
\end{proposition}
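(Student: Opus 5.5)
The plan is the standard Giga--Kohn argument built on Proposition \ref{monotonicity}. First, $\mathcal{E}[v](s)$ is bounded uniformly in $s$. Indeed, $|v|\le M$ and Proposition \ref{aprioriv} gives $|\nabla_z v|\le C$. The $K$-term is finite, since $K$ integrates $\min\{|z-w|^2,1\}$ with Gaussian weight; this uses the bounds \eqref{The first estimate}--\eqref{The first estimate'} integrated against $m(\tau)$. The $B_\tau$-term is controlled by Lemma \ref{Quadform} together with $\|\delta_\tau v\|_{L^2_\rho}\le C\min\{|\tau|^\theta,1\}$, which follows from Proposition \ref{aprioriv}, and $\int m(\tau)\min\{|\tau|^{2\theta},1\}\,d\tau<\infty$ because $2\theta>1+\sigma>\sigma$. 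The remaining terms are trivially bounded. Since $\mathcal{E}[v]$ is nonincreasing by Proposition \ref{monotonicity} and bounded, the limits $\mathcal{E}_{\pm}:=\lim_{s\to\pm\infty}\mathcal{E}[v](s)$ exist and are finite. Monotonicity then yields
\begin{equation*}
\int_{-\infty}^{\infty}\int_{-\infty}^{0}Q_\tau[\delta_\tau v(\cdot,s),\delta_\tau v(\cdot,s)]\,d\tau\, ds=2(\mathcal{E}_- -\mathcal{E}_+)<\infty .
\end{equation*}

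Second, we pass to the limit along $v_j(z,s)=v(z,s+s_j)$. Each $v_j$ solves \eqref{EmdenFowler} with the uniform bounds of Proposition \ref{aprioriv}, so Arzel\`a--Ascoli applies. Using Hölder continuity and the bound $|\delta_\tau v|\le C(1+|z|)\min\{|\tau|^\theta,1\}$, dominated convergence lets us pass to the limit in the nonlocal term of \eqref{EmdenFowler}. Thus $v_{\pm\infty}$ is again a bounded solution of \eqref{EmdenFowler}, i.e.\ it comes from a weak solution satisfying \eqref{type I assumption}. For any fixed $R>0$,
\begin{equation*}
\int_{-R}^{R}\int_{-\infty}^0 Q_\tau[\delta_\tau v_j,\delta_\tau v_j]\,d\tau\, ds=\int_{s_j-R}^{s_j+R}\int_{-\infty}^0Q_\tau[\delta_\tau v,\delta_\tau v]\,d\tau\, ds\to 0,
\end{equation*}
because this is the tail of a convergent integral. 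Then Fatou's lemma gives $\int_{-R}^R\int Q_\tau[\delta_\tau v_{\pm\infty},\delta_\tau v_{\pm\infty}]=0$. To apply Fatou we need $Q_\tau[\delta_\tau v_j,\delta_\tau v_j]\to Q_\tau[\delta_\tau v_{\pm\infty},\delta_\tau v_{\pm\infty}]$ for each fixed $s,\tau$. This follows from the continuity of $Q_\tau$ on $L^2_\rho$ (Lemma \ref{Quadform}) and from $\delta_\tau v_j\to\delta_\tau v_{\pm\infty}$ in $L^2_\rho$, which holds by local uniform convergence plus the uniform bound and Gaussian tails. Positive definiteness of $Q_\tau$ then forces $\delta_\tau v_{\pm\infty}(\cdot,s)=0$ for a.e.\ $\tau<0$ and $s\in(-R,R)$. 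Since $R$ is arbitrary and $v_{\pm\infty}$ is continuous, $v_{\pm\infty}$ is independent of $s$.

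Third, for the energy statement we show $\mathcal{E}[v_j](s)=\mathcal{E}[v](s+s_j)\to\mathcal{E}[v_{\pm\infty}](s)$. The limit of the left side is $\mathcal{E}_\pm$ regardless of the sequence, so $\mathcal{E}[v_{\pm\infty}]=\mathcal{E}_\pm$. Convergence of the local terms $\int\rho v^2$ and $\int\rho|v|^{p+1}$ is immediate by dominated convergence. The $K$-term and the $B_\tau$-term converge because their integrands are dominated by the integrable majorants from the first step, uniformly in $j$, and converge pointwise. For the $B_\tau$-term this uses Lemma \ref{Quadform} for each $\tau$ and dominated convergence in $\tau$ with majorant $Cm(\tau)\min\{|\tau|^{2\theta},1\}$.

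The main obstacle is this continuity of $\mathcal{E}$ under locally uniform convergence, together with the uniform integrable majorants. The delicate parts are the tail $\tau\to-\infty$ and the singularity at $\tau\to0$ of the space-time nonlocal term. I would handle these exactly as in the estimates of $I_{12}$ and $I_{2112}$, relying on $2\theta>1+\sigma$. The hypothesis $s_{j+1}-s_j\to\pm\infty$ is not essential here; it is only needed later in the Giga--Kohn argument.
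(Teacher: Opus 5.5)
Your proposal is correct and follows essentially the paper's route. Uniform boundedness of $\mathcal{E}[v]$ together with Proposition \ref{monotonicity} makes the dissipation $\int\!\int Q_\tau[\delta_\tau v_j,\delta_\tau v_j]$ over any fixed window vanish; positive definiteness of $Q_\tau$ (Lemma \ref{Quadform}) then forces the limit to be independent of $s$; and continuity of $\mathcal{E}$ under locally uniform convergence with uniform majorants gives the energy claim. Your streamlinings are all valid: you use finiteness of the total dissipation instead of telescoping along $s_{j+1}-s_j\to\pm\infty$ (which shows that hypothesis is superfluous), you pass to the limit via Fatou, and you identify $\mathcal{E}[v_{\pm\infty}]=\lim_{s\to\pm\infty}\mathcal{E}[v](s)$ directly instead of through the paper's contradiction argument. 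In doing so you also make explicit the continuity of $\mathcal{E}$ that the paper uses without comment.
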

\begin{proof}
Without loss of generality, we may assume that $s_j\to +\infty$. For any $j$, we set $v_j(z,s):=v(z,s+s_j)$. 
By Proposition \ref{aprioriv}, up to a subsequence, we have  $v_j(z,s)=v(z,s+s_j)$ converges to a limit $v_{+\infty}$ locally uniformly in $\mathbb{R}^n\times \mathbb{R}$.  Moreover, it follows from Proposition \ref{aprioriv}, \eqref{The first estimate} and \eqref{The first estimate'} that $\mathcal{E}[v](a)$ is uniformly bounded with respect to $a$. By Proposition \ref{monotonicity}, we have for any $a\in \mathbb{R}$,
\begin{equation*}
\mathcal{E}[v_j](a)-\mathcal{E}[v_{j+1}](a)=\frac{1}{2}\int_{a}^{a+s_{j+1}-s_j}\int_{-\infty}^{0}Q_\tau[\delta_\tau v(\cdot,s),\delta_\tau v(\cdot,s)]d\tau ds.
\end{equation*}
Since $\mathcal{E}[v_j](a)$ is a bounded monotone nonincreasing function, we deduce that
\begin{equation*}
\mathcal{E}[v_j](a)-\mathcal{E}[v_{j+1}](a)\to 0    
\end{equation*}
as $j\to \infty$. Since $s_{j+1}-s_j\to +\infty$, by Proposition \ref{aprioriv} and Lemma \ref{Quadform}, we have for any $a<b$,
\begin{equation*}
\int_{a}^{b}\int_{-\infty}^{0}Q_\tau[\delta_\tau v_{+\infty}(\cdot,s),\delta_\tau v_{+\infty}(\cdot,s)]d\tau ds=0.
\end{equation*}
Thus $\delta_\tau v_{+\infty}(z,s)=0$ for any  $s\in \mathbb{R}$, $\tau<0$ and $z\in \mathbb{R}^n$. In particular, $v_{+\infty}$ is independent of $s$.

In order to finish the proof of the proposition, it suffices to check $\mathcal{E}[v_{+\infty}]$ is independent of the choice of the sequence $\{s_j\}$. Indeed, if there is another sequence $\{\tilde s_j\}$ with $\tilde s_j\to +\infty$ and $\tilde s_{j+1}-\tilde s_j\to +\infty$, such that $\tilde v_j(z,s):=v(z,s+\tilde s_j)$ converges to a limit $\tilde v_{+\infty}$ locally uniformly in $\mathbb{R}^n\times \mathbb{R}$,
 and
\begin{equation*}
\mathcal{E}[v_{+\infty}]\neq \mathcal{E}[\tilde v_{+\infty}].
\end{equation*}
Without loss of generality, we may assume $\mathcal{E}[v_{+\infty}]<\mathcal{E}[\tilde v_{+\infty}]$ and $s_j<\tilde s_j$. It follows that
\begin{equation*}
\mathcal{E}[v_j](0)-\mathcal{E}[\tilde v_j](0)=\mathcal{E}[v](s_j)-\mathcal{E}[v](\tilde s_j)=\frac{1}{2}\int_{s_j}^{\tilde s_j}\int_{-\infty}^{0}Q_\tau[\delta_\tau v(\cdot,s),\delta_\tau v(\cdot,s)]d\tau ds\geq 0.
\end{equation*}
Since $\mathcal{E}[v_j](0)-\mathcal{E}[\tilde v_j](0)\to \mathcal{E}[v_{+\infty}]-\mathcal{E}[\tilde v_{+\infty}]<0$, the left-hand side is negative for sufficiently large $j$, a contradiction. Consequently, $\mathcal{E}[v_{+\infty}]$ is independent of the sequence $\{s_j\}$.
\end{proof}

With Proposition \ref{vindeps}, we can prove that $\lim_{s\to\pm\infty}v(z,s)$ exists.
\begin{proposition}\label{vlim}
For $n\leq 2\sigma$ or $1<p\leq \frac{n+2\sigma}{n-2\sigma}$, if $u$ is a weak solution to \eqref{fraceqn} satisfying \eqref{type I assumption}. Let $v$ be defined via \eqref{self-similar coordinates} and \eqref{self-similar transform}. Then the limits $\lim_{s\to\pm\infty}v(z,s)$ exist and are equal to $0$ or $\pm \kappa$. Also, the convergence is locally uniform in $\mathbb{R}^n$.
\end{proposition}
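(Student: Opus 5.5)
The plan follows Giga--Kohn's argument. Proposition~\ref{vindeps} and Theorem~\ref{main} will identify every subsequential limit, and continuity in $s$ will then rule out switching between different limits. I treat $s\to+\infty$; the case $s\to-\infty$ is identical.

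\textbf{Step 1 (identifying the limits).} Take any sequence $s_j\to+\infty$. Passing to a subsequence, I may assume $s_{j+1}-s_j\to+\infty$. By Proposition~\ref{aprioriv} and Arzel\`a--Ascoli, a further subsequence of $v(\cdot,\cdot+s_j)$ converges locally uniformly on $\mathbb{R}^n\times\mathbb{R}$ to some $v_{+\infty}$. Proposition~\ref{vindeps} shows that $v_{+\infty}$ is independent of $s$.

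I will check that $v_{+\infty}$ solves \eqref{selfsimeqn}. The nonlocal equation \eqref{EmdenFowler} passes to the limit by dominated convergence. The integrand $(v(z,s)-v(w,s+\tau))M_\tau$ is controlled near $\tau=0$, $w=z$ using the uniform H\"older and gradient bounds of Proposition~\ref{aprioriv}, together with the integrability estimates \eqref{The first estimate} and \eqref{The first estimate'}. Away from $\tau=0$ and $w=z$, boundedness of $v$ suffices. Equivalently, one can pass to the limit in the weak formulation of Definition~\ref{Def:weaksol}. The limit is bounded by $M$, so Theorem~\ref{main} gives $v_{+\infty}\in\{0,\kappa,-\kappa\}$.

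\textbf{Step 2 (ruling out oscillation).} The set of subsequential limits is $\{0,\pm\kappa\}$. This set is discrete, and the values are separated at, say, $z=0$. I will show that the full limit exists. Suppose two distinct constants $c_1\neq c_2$ both occur as subsequential limits of $v(\cdot,s)$ in $C_{loc}(\mathbb{R}^n)$ along $s\to+\infty$.

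The uniform time-H\"older bound $|v(z,s)-v(z,s+\tau)|\le C|\tau|^\theta(1+|z|)$ shows that $s\mapsto v(0,s)$ is uniformly continuous. By the intermediate value theorem, there are times $s_j\to\infty$ with $v(0,s_j)$ equal to a fixed value strictly between $c_1$ and $c_2$, chosen different from $0$ and $\pm\kappa$. A locally uniform limit along such a sequence (thinned so that $s_{j+1}-s_j\to\infty$) would then be a constant in $\{0,\pm\kappa\}$ taking that intermediate value at $z=0$. This is a contradiction.

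Hence $v(\cdot,s)$ converges as $s\to+\infty$, locally uniformly, to a single constant in $\{0,\pm\kappa\}$. Uniqueness of the limit combined with precompactness then gives convergence along the full family $s\to\infty$, not just along subsequences.

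\textbf{Main obstacle.} The delicate point is the limit passage in the nonlocal equation in Step 1. Because of the memory term, one must control the kernel tails uniformly in $j$, and near $\tau=0$ the time regularity $\theta>\frac{1+\sigma}{2}>\sigma$ is exactly what is needed. The proofs of the $I_{12}$ and $I_{22}$ bounds in Proposition~\ref{monotonicity} already contain these estimates. Energy independence from Proposition~\ref{vindeps} is not even needed here, since discreteness of the limit set is enough.
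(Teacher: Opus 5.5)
Your proof is correct. Step 1 matches the paper's, and you also justify that the limit solves \eqref{selfsimeqn}, which the paper leaves implicit. One caution there: for $\sigma\ge\frac12$ the double integral in \eqref{EmdenFowler} is not absolutely convergent near $\tau=0$. Dominated convergence must therefore be applied after the $w$-integration, where the linear term cancels up to $O(|\tau|)$ and the remainder is $O(|\tau|^\theta)$. Alternatively, pass to the limit in the weak formulation, as you also suggest.

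Step 2 takes a different route from the paper's. The paper uses the second half of Proposition \ref{vindeps}: limits along sequences with gaps tending to infinity all have the same energy. Since $\mathcal{E}[\pm\kappa]>\mathcal{E}[0]$, this excludes $0$ and $\pm\kappa$ occurring together. The paper uses the intermediate value theorem only to produce zeros of $v(0,\cdot)$, and hence a limit equal to $0$. That rules out oscillation between $\kappa$ and $-\kappa$ by the same energy comparison.

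You instead choose the intermediate level outside $\{0,\pm\kappa\}$, so the intermediate value theorem alone gives a contradiction in every case. This is the standard fact that the $\omega$-limit set of a continuous precompact trajectory is connected, so it is a single point when it lies in a discrete set. Your version needs only the $s$-independence part of Proposition \ref{vindeps} (to invoke Theorem \ref{main}) and no energy computation, so it is shorter and purely topological. The paper's energy-based version fits the Giga--Kohn framework. Its key inequality $\mathcal{E}[\pm\kappa]>\mathcal{E}[0]$ is needed anyway in the proof of Theorem \ref{selfsimLiouville}.
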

\begin{proof}
Without loss of generality, we take a sequence $\{s_j\}$ with $s_j\to +\infty$. Up to a subsequence, we can further assume that $s_{j+1}-s_j\to +\infty$. By Proposition \ref{aprioriv}, up to a subsequence, we have $v(\cdot, s+s_j)$ converges to some function $v_{+\infty}(z,s)$ locally uniformly in $\mathbb{R}^n\times \mathbb{R}$.  Then Proposition \ref{vindeps} yields that $v_{+\infty}(z,s)$ is independent of $s$. By Theorem \ref{main}, we know that $v_{+\infty}$ equals $0$ or $\pm \kappa$.

In the following, we prove that $v_{+\infty}$ is independent of the sequence $\{s_j\}$.

Suppose there are two sequences $\{s_j\}$ and $\{\tilde s_j\}$ with $s_j, \tilde s_j\to +\infty$, $s_{j+1}-s_j, \tilde s_{j+1}-\tilde s_j\to +\infty$ and
\begin{equation*}
v_j(z,s):=v(z,s+s_j)\to v_{+\infty},\quad \tilde v_j(z,s):=v(z,s+\tilde s_j)\to \tilde v_{+\infty}   
\end{equation*}
locally uniformly in $\mathbb{R}^n\times \mathbb{R}$ with $v_{+\infty}\not\equiv \tilde v_{+\infty}$. By Proposition \ref{vindeps}, we have $\mathcal{E}[v_{+\infty}]=\mathcal{E}[\tilde v_{+\infty}]$. If $\mathcal{E}[v_{+\infty}]=\mathcal{E}[0]$, then $v_{+\infty}=0$ and $\mathcal{E}[\tilde v_{+\infty}]=\mathcal{E}[v_{+\infty}]=0$, hence $\tilde v_{+\infty}=0$. Since we have assumed $v_{+\infty}\not\equiv \tilde v_{+\infty}$, this is impossible. Therefore, we have $\mathcal{E}[v_{+\infty}]=\mathcal{E}[\pm \kappa]>\mathcal{E}[0]$. Then either $v_{+\infty}=\kappa, \tilde v_{+\infty}=-\kappa$ or $v_{+\infty}=-\kappa, \tilde v_{+\infty}=\kappa$. Without loss of generality, we assume the first case occurs. Since $v$ is continuous, there is a sequence $s_j'\to +\infty$ with $v(0,s_j')=0$. Then up to a subsequence, we have $v_j'(z,s):=v(z,s+s_j')$ converges to $0$
locally uniformly in $\mathbb{R}^n\times \mathbb{R}$. Then we have $\mathcal{E}[v_{+\infty}]\neq \mathcal{E}[0]$, a contradiction.
\end{proof}

Finally, we give the proof of Theorem \ref{selfsimLiouville}.
\begin{proof}[Proof of Theorem \ref{selfsimLiouville}]
Without loss of generality, we may assume 
\begin{equation*}
    \limsup_{s\to +\infty}v(0,s)>0.
\end{equation*}
 For any $S>0$, Proposition \ref{monotonicity} yields
\begin{equation*}
\mathcal{E}[v](-S)-\mathcal{E}[v](S)=\frac{1}{2}\int_{-S}^{S}\int_{-\infty}^{0}Q_\tau[\delta_\tau v(\cdot,s),\delta_\tau v(\cdot,s)]d\tau ds.
\end{equation*}
With Proposition \ref{vlim}, we can send $S\to +\infty$ to get
\begin{equation*}
\mathcal{E}[v_{-\infty}]-\mathcal{E}[v_{+\infty}]=\frac{1}{2}\int_{-\infty}^{+\infty}\int_{-\infty}^{0}Q_\tau[\delta_\tau v(\cdot,s),\delta_\tau v(\cdot,s)]d\tau ds\geq 0,
\end{equation*}
where $v_{\pm \infty}=\lim_{s\to \pm\infty}v(z,s)$. By our assumption, we have $v_{+\infty}=\kappa$. It follows that $v_{-\infty}=\pm \kappa$. Then the left-hand side is $0$, yielding that $v$ is independent of $s$. Hence, we have $v=v_{\pm\infty}=\kappa$.
\end{proof}
\appendix

\section{Proof of some auxiliary tools}\label{Appendix A}
\setcounter{equation}{0}
In this appendix, we prove some technical tools which were used in the proofs of the main theorems. We begin with the proof of Lemma \ref{CK}.
\begin{proof}[Proof of Lemma \ref{CK}] Using the notations introduced in \eqref{nonlocal kernel}, we have
\begin{equation*}
\begin{aligned}
&\int_{\R^n}\rho(\eta)p_{\tau/2}(\eta,z)p_{\tau/2}(\eta,w)d\eta\\
=&\frac{1}{(4\pi (1-e^{\tau/2}))^{n}}\int_{\R^n}\exp\left(-\frac{|z-e^{\frac{\tau}{4}}\eta|^2}{4(1-e^{\frac{\tau}{2}})}-\frac{|w-e^{\frac{\tau}{4}}\eta|^2}{4(1-e^{\frac{\tau}{2}})}-\frac{|\eta|^{2}}{4}\right)d\eta\\
=&\frac{1}{(4\pi (1-e^{\tau/2}))^{n}}\int_{\R^n}\exp\left(-\frac{1+e^{\frac{\tau}{2}}}{4(1-e^{\frac{\tau}{2}})}\left|\eta-\frac{e^{\frac{\tau}{4}}}{1+e^{\frac{\tau}{2}}}(z+w)\right|^2-\frac{|w-e^{\frac{\tau}{2}}z|^2}{4(1-e^\tau)}-\frac{|z|^2}{4}\right)d\eta\\
=&\rho(z)p_\tau(z,w).
\end{aligned}
\end{equation*}
The proof is completed.
\end{proof}
Next, we give the proof of Lemma \ref{Teigen}.
\begin{proof}[Proof of Lemma \ref{Teigen}]
Notice that $\int_{\R^n}p_\tau(z,w)dw=1$. Then $T_{\beta,\sigma}$ can be written as
\begin{equation*}
T_{\beta,\sigma}v(z)=\int_{-\infty}^{0}m(\tau)\left(v(z)-\int_{\mathbb{R}^n}p_\tau(z,w)v(w)dw\right)d\tau+Av(z).
\end{equation*}

Consider the generating function $\Phi(z,\xi)$ of $h_{\alpha}$ (see \cite{AndrewsAskeyRoy1999}) defined as 
\begin{equation}\label{generatingPhi}
\Phi(z,\xi):=\exp\left(\frac{z\cdot \xi}{\sqrt{2}}-\frac{|\xi|^2}{2}\right)=\sum_{\alpha\in \mathbb{N}^n}h_{\alpha}(z)\frac{\xi^\alpha}{\sqrt{\alpha!}}.
\end{equation}
Then we have
\begin{equation*}
\begin{aligned}
&\int_{\R^n}p_\tau(z,w)\Phi(w,\xi)dw\\
=&\ \frac{1}{(4\pi (1-e^{\tau}))^{\frac{n}{2}}}\int_{\R^n}\exp\left(-\frac{|w-e^{\frac{\tau}{2}}z|^2}{4(1-e^\tau)}+\frac{w\cdot \xi}{\sqrt{2}}-\frac12 |\xi|^2\right)dw\\
=&\ \frac{1}{(4\pi (1-e^{\tau}))^{\frac{n}{2}}}\int_{\R^n}\exp\left(-\frac{|w-e^{\frac{\tau}{2}}z-\sqrt{2}(1-e^\tau)\xi|^2}{4(1-e^\tau)}\right)dw \cdot \exp\left(\frac{e^{\frac{\tau}{2}}z\cdot \xi}{\sqrt{2}}-\frac{e^\tau}{2} |\xi|^2\right)\\
=&\ \exp\left(\frac{e^{\frac{\tau}{2}}z\cdot \xi}{\sqrt{2}}-\frac{e^\tau}{2} |\xi|^2\right)=\Phi(z,e^{\frac{\tau}{2}}\xi).
\end{aligned}
\end{equation*}
It follows that for any fixed $\xi\in \R^n$,
\begin{equation*}
T_{\beta,\sigma}\Phi(z,\xi)=\int_{-\infty}^{0}m(\tau)\left(\Phi(z,\xi)-\Phi(z,e^{\frac{\tau}{2}}\xi)\right)d\tau+A\Phi(z,\xi).
\end{equation*}
Plugging \eqref{generatingPhi} above, we have
\begin{equation*}
\sum_{\alpha\in \mathbb{N}^n}T_{\beta,\sigma}h_{\alpha}(z)\frac{\xi^\alpha}{\sqrt{\alpha!}}=\sum_{\alpha\in \mathbb{N}^n}\left[\int_{-\infty}^{0}m(\tau)(1-e^{\frac{|\alpha|}{2}\tau})d\tau+A\right]h_{\alpha}(z)\frac{\xi^\alpha}{\sqrt{\alpha!}}.
\end{equation*}
Comparing the coefficients, we get for any $\alpha\in \mathbb{N}^n$,
\begin{equation*}
T_{\beta,\sigma}h_{\alpha}(z)= \left[ \int_{-\infty}^{0}m(\tau)(1-e^{\frac{|\alpha|}{2}\tau})d\tau+A\right]h_{\alpha}(z).  
\end{equation*}
Recall that from \eqref{IntegralA},
\begin{equation*}
A=\int_{-\infty}^{0}\frac{(1-e^{\beta\tau})e^{\sigma\tau}}{|\Gamma(-\sigma)|(1-e^\tau)^{1+\sigma}}d\tau.
\end{equation*}
Then we have
\begin{equation*}
\begin{aligned}
\int_{-\infty}^{0}m(\tau)(1-e^{\frac{|\alpha|}{2}\tau})d\tau+A=\frac{1}{|\Gamma(-\sigma)|}\int_{-\infty}^{0}e^{\sigma\tau}(1-e^\tau)^{-1-\sigma}(1-e^{(\beta+\frac{|\alpha|}{2})\tau})d\tau.
\end{aligned}
\end{equation*}
With the change of variables $y=e^\tau$, we have
\begin{equation*}
\begin{aligned}
&\frac{1}{|\Gamma(-\sigma)|}\int_{-\infty}^{0}e^{\sigma\tau}(1-e^\tau)^{-1-\sigma}(1-e^{(\beta+\frac{|\alpha|}{2})\tau})d\tau\\
=&\frac{1}{|\Gamma(-\sigma)|}\int_{0}^{1}y^{\sigma-1}(1-y)^{-1-\sigma}(1-y^{\beta+\frac{|\alpha|}{2}})dy\\
=&\frac{1}{\sigma|\Gamma(-\sigma)|}\int_{0}^{1}\frac{d}{dy}\left[\left(\frac{y}{1-y}\right)^{\sigma}\right](1-y^{\beta+\frac{|\alpha|}{2}})dy\\
=&\frac{\beta+\frac{|\alpha|}{2}    }{\sigma|\Gamma(-\sigma)|}\int_{0}^{1}y^{{\beta+\frac{|\alpha|}{2}}+\sigma-1}(1-y)^{-\sigma}dy=\frac{\Gamma({\beta+\frac{|\alpha|}{2}}+\sigma)}{\Gamma({\beta+\frac{|\alpha|}{2}})},
\end{aligned}
\end{equation*}
which is our desired result.
\end{proof}

{\center{\bf Acknowledgements}.} Y. Sire is partially supported by DMS NSF grant 2154219, ``Regularity vs singularity formation in
elliptic and parabolic equations''. J.C. Wei is supported by National Key R\&D Program of China 2022YFA1005602, and Hong Kong General Research Fund ``New frontiers in singular limits of nonlinear partial differential equations''. The research of K. Wu is supported by the National Natural Science Foundation
of China (No. 12401264) and Yunnan Revitalization Talent Support Program.

\bibliography{Fullyfractionalreference}
\bibliographystyle{plain}

\end{document}